
\documentclass[11pt]{amsart}


\usepackage{latexsym, amssymb, amsthm,color}
\usepackage[centertags]{amsmath}
\usepackage{pictexwd}
\usepackage[normalem]{ulem}
\pagestyle{headings}

\definecolor{GREEN}{rgb}{0,1,0}
\definecolor{green4}{rgb}{.1,.5,.1}

 \newcommand{\ep}{\end{proof}}

 \newcommand{\sm}{\smallskip}

 \newif\ifpctex


  \newtheorem{theorem}{Theorem}
  \newtheorem{definition}{Definition}[section]

  \newtheorem{cond}[definition]{Condition}
  \newtheorem{proposition}[definition]{Proposition}
  \newtheorem{lemma}[definition]{Lemma}
  \newtheorem{cor}[definition]{Corollary}
  
  \newtheorem{corollary}[definition]{Corollary}
  \newcommand{\beCond}[2]{\Rand{\vspace{0,6cm}\tt #1}\begin{cond}[#2]
  \label{#1}} \theoremstyle{definition}
  \newtheorem{remark}[definition]{Remark}
  
  \numberwithin{equation}{section}
  \newtheoremstyle{step}{3pt}{0pt}{\itshape}{}{\bf}{}{.5em}{}

\theoremstyle{step} \newtheorem{step}{Step}

\newcommand{\R}{\mathbb{R}} 
\newcommand{\N}{\mathbb{N}} \newcommand{\M}{\mathbb{M}}

 \newcommand{\CI}{\mathcal{I}}

 \newcommand{\CU}{\mathcal{U}}

\newcommand{\Rand}[1]{\marginpar{#1}} 
\marginparwidth1.5cm
\newcommand{\be}[1]{\begin{equation}\label{#1}}
\newcommand{\ee}{\end{equation}}
\newcommand{\bew}[1]{\Rand{\vspace{0,6cm}\tt #1}\begin{equation*}\label{#1}}
\newcommand{\eew}{\end{equation*}}
\newcommand{\bea}[1]{\Rand{\vspace{0,6cm}\tt #1}\begin{eqnarray*}\label{#1}}
\newcommand{\eea}[1]{\end{eqnarray*}}

\newcommand{\beL}[2]{\Rand{\vspace{0,6cm}\tt #1}\begin{lemma}[#2]\label{#1}}
\newcommand{\beD}[2]{\Rand{\vspace{0,6cm}\tt #1}\begin{definition}[#2]\label{#1}}
\newcommand{\beT}[2]{\Rand{\vspace{0,6cm}\tt #1}\begin{theorem}[#2]\label{#1}}
\newcommand{\beP}[2]{\Rand{\vspace{0,6cm}\tt #1}\begin{proposition}[#2]\label{#1}}
\newcommand{\beC}[1]{\Rand{\vspace{0,6cm}\tt #1}\begin{corollary}\label{#1}}
\newcommand{\beR}[1]{\Rand{\vspace{0,6cm}\tt #1}\begin{remark}[#1]\label{#1}}

\newcommand{\Tto}{{_{\displaystyle\Longrightarrow\atop t\to\infty}}}
\newcommand{\tto}{{_{\displaystyle\longrightarrow\atop t\to\infty}}}
\newcommand{\Tno}{{_{\displaystyle\Longrightarrow\atop n\to\infty}}}
\newcommand{\tno}{{_{\displaystyle\longrightarrow\atop n\to\infty}}}
\newcommand{\TNo}{{_{\displaystyle\Longrightarrow\atop N\to\infty}}}

%
\newcommand{\smallk}{\mathpzc{k}}
\newcommand{\smallp}{\mathpzc{p}}
\newcommand{\smallu}{\mathpzc{u}}
\newcommand{\smallx}{\mathpzc{x}}

\DeclareMathAlphabet{\mathpzc}{OT1}{pzc}{m}{it}




\begin{document}

\title[Tree-valued resampling dynamics]
{{\Large Tree-valued resampling dynamics}\\[3mm]
  {\large Martingale Problems and applications}}

\author{Andreas Greven}
\address{Andreas Greven\\ Mathematisches Institut\\ University of Erlangen\\ Bismarckstr.\ 1$\tfrac 12$\\
  D-91054 Erlangen \\ Germany} \email{greven@mi.uni-erlangen.de}

\author{Peter Pfaffelhuber}
\address{Peter Pfaffelhuber\\ Abteilung f{\"u}r mathematische
  Stochastik\\ Albert-Ludwigs University of
  Freiburg\\ Eckerstra\ss e 1\\ D-79104 Freiburg\\
  Germany} \thanks{All authors were supported in part by the
  DFG-Forschergruppe 498 through grant GR 876/13-1,2,3.}
\thanks{Peter Pfaffelhuber was supported in part by the BMBF, Germany,
  through FRISYS (Freiburg Initiative for Systems biology),
  Kennzeichen 0313921.}  \email{p.p@stochastik.uni-freiburg.de}

\author{Anita Winter}
\address{Anita Winter\\
  Universit\"at Duisburg-Essen\\
  Fakult\"at f\"ur Mathematik\\Universit\"atsstr. 2\\ D-45141 Essen\\
  Germany} \email{anita.winter@uni-due.de} \thanks{Anita Winter was
  supported in part at the Technion by a fellowship from the Aly
  Kaufman Foundation}

\thispagestyle{empty} \date{\today, 110615{\_}mp{\_}ptrf.tex}

\keywords{Tree-valued Markov process, Fleming-Viot process, Moran
  model, genealogical tree, martingale problem, duality,
  (ultra-)metric measure space, Gromov-weak topology.}

\subjclass[2000]{Primary: 60K35, 60J25; Secondary: 60J70, 92D10}

  \begin{abstract}\sloppy
    The measure-valued Fleming-Viot process is a diffusion which
    models the evolution of allele frequencies in a multi-type
    population. In the neutral setting the Kingman coalescent is known
    to generate the genealogies of the ``individuals'' in the
    population at a fixed time.  The goal of the present paper is to
    replace this static point of view on the genealogies by an
    analysis of the evolution of genealogies.

    We encode the genealogy of
      the population as an (isometry class of an) ultra-metric
      space which is equipped with a probability measure. The space
      of ultra-metric measure spaces together with the Gromov-weak
      topology serves as state space for tree-valued processes.
       We
    use well-posed martingale problems to construct the tree-valued
    resampling dynamics of the evolving genealogies for both the
    finite population Moran model and the infinite population
    Fleming-Viot diffusion.

    We show that sufficient information about any ultra-metric measure
    space is contained in the distribution of the vector of subtree
    lengths obtained by sequentially sampled ``individuals''.  We give
    explicit formulas for the evolution of the Laplace transform of
    the distribution of finite subtrees under the tree-valued
    Fleming-Viot dynamics.
\end{abstract}

\maketitle

\section{Introduction}
The evolution of a population is commonly modeled using branching or
resampling dynamics. In both cases the analysis of the genealogical
relationships of individuals leads to a deeper understanding of the
underlying dynamics and is crucial in applications in population
genetics.  An observation which is fundamental for the present paper
is that genealogical relationships between individuals change as the
population evolves. We here want to construct and study the evolution
of the {\em genealogical structure} for the neutral Fleming-Viot
process which arises as a large population limit of various finite
resampling models
(\cite{FlemingViot1978,FlemingViot1979,Dawson1993,EthierKurtz1993,DGV95,MR1779100}).

  A basic finite resampling model is the Moran model, which can be
  described as follows: Each pair of individuals, taken from a finite
  population of fixed size, \emph{resamples} at constant
  rate. Resampling means that one individual is chosen at random from
  the pair, the pair dies and is replaced by two new individuals which
  are both offspring of the chosen individual.

  In resampling models
  genealogical trees can be generated by coalescent processes. The
  equilibrium genealogy of the Fleming-Viot diffusion, for example, is
  generated by the Kingman coalescent
  (\cite{Kingman1982a,Ald1993,Evans2000,GLW05}).  More general
  resampling dynamics which allow for an infinite offspring variance
  are studied in \cite{BertoinLeGall2003}. Their genealogical trees
  are described by $\Lambda$-coalescents
  (\cite{Pit1999,GrePfaWin2006a}).  Genealogical trees are also
  considered for branching models which allow for a varying population
  size. Prominent examples are the Kallenberg tree (\cite{Kal1977}),
  the Yule tree (\cite{EvansOConnell1994}), the Brownian continuum
  random tree (\cite{MR93f:60010}) and the Brownian snake
  (\cite{MR1714707}). More general branching mechanisms lead e.g.\ to
  L\'evy trees (\cite{MR1954248}), which are the infinite variance
  offspring distribution counterpart of the Brownian continuum random
  trees, and trees arising in catalytic branching systems
  (\cite{GrePopWin2007}).

   Coalescent trees describe the genealogy
  of a population at a fixed time and give therefore a static picture
  only. The main goal of the present paper is to give with the {\em
    tree-valued Fleming-Viot dynamics} a dynamic picture which
  describes the evolution of genealogies.  Evolving genealogies in
  exchangeable population models have already been described by {\em
    look-down processes}
  (\cite{DonKur1996,DonKur1998,DonKur1999,BirknerBlathMoehle2009});
  see also Remark~\ref{Rem:14}. For neutral evolution, look-down
  processes contain -- though in an implicit way -- all information
  about the genealogies. The depth of the tree
  (\cite{PfaffelhuberWakolbinger2006,EvansRalph2010,DelmasEtAl2010})
  and the total tree length
  (\cite{PfaffelhuberWakolbingerWeisshaupt2010}) are examples of
  functionals of a genealogy which are constructed and studied via the
  look-down construction. The crucial point in the construction of
  look-down processes is the use of labels as coordinates. This
  information is often not needed and constraints the construction of
  tree-valued processes in selective (unequal chances of producing
  offspring) and spatial (only pairs in the same location may
  resample) settings.

  A first approach in the direction of a coordinate-free description
  has already been established for spatially structured populations
  via {\em historical processes} (\cite{DawPer91,GLW05}). Here,
  however, the coding of the genealogical relationships requires that
  different ``offspring'' immediately follow different spatial paths,
  almost surely. Only then the genealogy can be read off from the
  spatial paths of the ``individuals''. Historical processes can
  therefore, in particular, not be used to study genealogies in
  non-spatial situations.

  A different and more canonical coding of trees is therefore needed.
  In this paper we rely on the fact that genealogical distances
  between individuals define a metric. To take the individuals'
  contribution to the population into account we equip the resulting
  metric space with the (weak limit of the) empirical distribution of
  the individuals. We then follow the theory of {\em metric measure
    spaces} equipped with the {\em Gromov-weak topology} as developed
  in \cite{GrePfaWin2006a}. We show weak convergence of tree-valued
  Moran models and construct the limiting tree-valued Fleming-Viot
  dynamics. Such weak convergence results are best treated by using
  well-posed martingale problems, which allow -- in contrast to other
  techniques such as Dirichlet forms -- for statements concerning
  convergence of infinitesimal characteristics. In order to define
  these characteristics, we require a suitably large class of
  continuous functions which are easy to manipulate.  For tree-valued
  processes such an approach is novel. We make use of general theory
  in order to establish well-posedness of the limiting martingale
  problem (Theorem~\ref{T:01}), weak convergence of tree-valued Moran
  models (Theorem~\ref{T:02}) and the long-time behavior
  (Theorem~\ref{T:03}).

  Another useful consequence of a well-posed martingale problem is
  that it allows to study the evolution of continuous functionals of
  these processes and to characterize the functionals which are strong
  Markov processes. Of particular importance is the vector of tree
  lengths for subsequently sampled ``individuals''. An important
  result (Theorem~\ref{T:04}) is that the resulting {\em subtree
    length distribution} characterizes the ultra-metric measure tree
  uniquely. From a theoretical point of view this can be considered as
  a generalization of the moment problem for bounded real-valued
  random variables to metric measure spaces. It is also of interest in
  statistical applications since it states that all sufficient
  information about genealogies is contained in the lengths of
  subtrees spanned by a finite sample.  Under the Fleming-Viot
  dynamics we construct the evolution of the tree length distribution
  via a well-posed martingale problem (Theorem~\ref{T:05}). Moreover,
  we derive explicit formulas for the evolution of the Laplace
  transform of finite subtrees.

  Markov dynamics with values in the space of continuum trees have
  been constructed only recently. Examples include excursion
  path-valued Markov processes with continuous sample paths - which
  can therefore be thought of as tree-valued diffusions - as
  investigated in \cite{MR1814427, MR1891060, MR1959795}, and dynamics
  working with real-trees, for example, the so-called {\em root growth
    with re-grafting} (\cite{EvaPitWin2006}), the so-called {\em
    subtree prune and re-graft move} (\cite{EvaWin2006}) and the
  limiting {\em random mapping} (\cite{math.PR/0701657}).  The present
  construction is extended to Fleming-Viot processes with selection in
  \cite{DepperschmidtGrevenPfaffelhuber2011}.

\section{Main results (Theorems~\ref{T:01},\ref{T:02} and~\ref{T:03})}
\label{S:results}
In this section we state our main results. In
Subsection~\ref{Sub:state} we recall concepts and terminology used to
define the state space
which consists of (ultra-)metric measure spaces
equipped with the Gromov-weak topology.  In Subsection~\ref{Sub:mp} we
state the tree-valued Fleming-Viot martingale problem and its
well-posedness (Theorem~\ref{T:01}), and present the approximation by
tree-valued Moran dynamics in Subsection~\ref{Sub:conv}
(Theorem~\ref{T:02}). In Subsection~\ref{Sub:long} we identify a
unique equi\-librium and state that it will be approached as time
tends to infinity (Theorem~\ref{T:03}).

\subsection{State space: metric measure spaces}
\label{Sub:state}
To define the state space we consider trees as metric
spaces. Moreover, to allow for a topology which discards atypical
points in the tree, we will equip these metric spaces with a
probability measure on the leaves. (Compare also with
Remark~\ref{rem:roleII}).  We then equip the space of metric measure
spaces with the Gromov-weak topology which combines the concept of
weak convergence of probability measures in a fixed metric space with
Gromov's idea of comparing different metric spaces.  In
\cite{GrePfaWin2006a} topological aspects of the space of metric
measure spaces equipped with the Gromov-weak topology are
investigated.  In this subsection we recall basic facts and notation.

As usual, given a topological space $(X,{\mathcal O})$ we denote by
${\mathcal M}_1(X)$ the space of all probability measures defined on
the Borel-$\sigma$-algebra of $X$, and by $\Rightarrow$ weak
convergence in $\mathcal M_1(X)$. Recall that the support
$\mathrm{supp}(\mu)$ of $\mu\in\mathcal M_1(X)$ is the smallest closed
set $X_0\subseteq X$ such that $\mu(X_0)=1$.  The push forward of
$\mu$ under a measurable map $\varphi$ from $X$ into another
topological space $Z$ is the probability measure
$\varphi_\ast\mu\in{\mathcal M}_1(Z)$ defined by
\begin{equation}\label{push}
  \varphi_\ast\mu(A) := \mu\big(\varphi^{-1}(A)\big),
\end{equation} for
all Borel subsets $A\subseteq Z$. We denote by $\mathcal B(X)$ and
${\mathcal C}_b(X)$ the bounded real-valued functions on $X$ which are
measurable and continuous, respectively.

A {\em metric measure space} is a triple $(X,r,\mu)$ where $(X,r)$ is
a metric space equipped with $\mu\in\mathcal M_1(X)$ such that
$(\mathrm{supp}(\mu),r)$ is complete and separable. Two metric measure
spaces $(X,r,\mu)$ and $(X^\prime,r^\prime,\mu^\prime)$ are
\emph{measure-preserving isometric} or \emph{equivalent} if there
exists an isometry $\varphi$ between $\mathrm{supp}(\mu)$ and
$\mathrm{supp}(\mu')$ such that $\mu'=\varphi_\ast\mu$. It is clear
that the property of being measure-preserving isometric is an
equivalence relation. We write $\overline{(X,r,\mu)}$ for the
equivalence class of a metric measure space $(X,r,\mu)$. Define the
set of (equivalence classes of) metric measure spaces
\begin{equation}\label{eq:defM}
  \mathbb M
  :=
  \big\{\smallx=\overline{(X,r,\mu)}:\,(X,r,\mu)\text{ metric
    measure space}\big\}.
\end{equation}
If $(X,r,\mu)$ is such that $r$ is only a pseudo-metric on $X$, (i.e.\
$r(x,y)=0$ is possible for $x\neq y$) we can still define its
measure-preserving isometry class.  Since this class contains also
metric measure spaces, there is a bijection between the set of
pseudo-metric measure spaces and the set of metric measure spaces and
we use both notions interchangeably.

For a metric space $(X,r)$ we define by
\begin{equation}\label{mono1a}
  R^{(X,r)}:\,
  \left\{\begin{array}{ll}X^{\mathbb N} &\to \mathbb R_+^{\binom{\mathbb N}{2}} \\
      \big((x_i)_{i\geq 1}\big)
      &\mapsto
      \big(r(x_i, x_j)\big)_{1\leq i<j}\end{array}\right.
\end{equation}
the map which sends a sequence of points in $X$ to its (infinite)
distance matrix, and denote, for a metric measure space $(X,r,\mu)$,
the {\em distance matrix distribution} of $(X,r,\mu)$ by
\label{Rem:02}
\begin{equation}\label{mono1b}
   {\nu}^{(X,r,\mu)} :=
   \big(R^{(X,r)}\big)_\ast\mu^{\otimes\mathbb N}\in{\mathcal M}_1(\mathbb R_+^{\N\choose 2}).
\end{equation}
Obviously, ${\nu}^{(X,r,\mu)}$ depends on $(X,r,\mu)$ only through its
measure-preserving isometry class $\smallx=\overline{(X,r,\mu)}$. We
can therefore define:

\begin{definition}[Distance matrix distribution] The distance matrix
  distribution $\nu^{\smallx}$ of $\smallx\in\mathbb{M}$ is the
  distance matrix distribution $\nu^{(X,r,\mu)}$ of an arbitrary
  representative $(X,r,\mu)\in\smallx$.
\label{Def:17}
\end{definition}\sm

By Gromov's reconstruction theorem metric measure spaces are uniquely
determined by their distance matrix distribution (see
Section~$3\tfrac12.5$ in \cite{Gromov2000} and Proposition 2.6 in
\cite{GrePfaWin2006a}). We therefore base our notion of convergence
in $\mathbb M$ on the convergence of distance matrix distributions.
In \cite{GrePfaWin2006a} we introduced the {\em Gromov-weak topology} in which a sequence $(\smallx_n)_{n\in\N}$
converges to $\smallx$ if and only if
\begin{equation}\label{e:convv2}
   \nu^{\smallx_n} \Tno\nu^\smallx
\end{equation}
in the weak topology on $\mathcal M_1(\mathbb R_+^{\binom{\mathbb
    N}{2}})$ (and, as usual, $\mathbb R_+^{\binom{\mathbb N}{2}}$
equipped with the product topology); compare Theorem 5 of
\cite{GrePfaWin2006a}.  Although $\{\nu^\smallx: \smallx\in\mathbb
M\}\subseteq \mathcal M_1(\mathbb R^{\binom{\N}{2}})$ is not closed,
we could show in Theorem~1 of \cite{GrePfaWin2006a} that $\mathbb M$,
equipped with the Gromov-weak topology, is Polish. \sm

Several sub-spaces of $\mathbb M$ are of special interest throughout
the paper. Above all, these are the \emph{ultra-metric} and
\emph{compact} metric measure spaces.

(The equivalence class of) a metric measure space $(X,r,\mu)$ is
called \emph{ultra-metric} iff
\begin{equation}\label{e:ultra}
   r(u,w)
 \leq
  r(u,v)\vee r(v,w),
\end{equation}
for $\mu$-almost all $u,v,w\in X$. Define
\begin{equation}\label{w:002}
\begin{aligned}
  \mathbb U
 :=
  \big\{\smallu\in\mathbb{M}:\,\smallu\text{ is ultra-metric}\big\}.
\end{aligned}
\end{equation}

\begin{remark}[Ultra-metric spaces are trees]
  Notice that there is a close connection between ultra-metric spaces
  \label{rem:ultraTrees}
  and $\R$-trees, i.e., complete path-connected metric spaces
  $(X,r_X)$ which satisfy the four-point condition
\begin{equation}\label{grev30}
  \begin{aligned}
    &r_X(x_1,x_2)+r_X(x_3,x_4)
    \\
    &\leq
    \max\big\{r_X(x_1,x_3)+r_X(x_2,x_4),r_X(x_1,x_4)+ r_X(x_2,x_3)\big\},
  \end{aligned}
\end{equation}
for all $x_1, x_2, x_3, x_4 \in X$ (see, for example, \cite{Dre84,
  DreMouTer96, Ter97}). \label{Rem:01} On the one hand, every complete
ultra-metric space $(U,r_U)$ {\em spans} a path-connected complete
metric space $(X,r_X)$ which satisfies the {\em four point condition},
such that $(U,r_U)$ is isometric to the set of leaves $X\setminus
X^o$.  On the other hand, given an $\R$-tree $(X,r_X)$ and a
distinguished point $\rho_X\in X$ which is often referred to as the
{\em root} of $(X,r_X)$, the level sets $X^t:=\{x\in
X:\,r(\rho_X,x)=t\}$, for $t\ge 0$, form ultra-metric sub-spaces
of $(X,r_X)$. For more details, see \cite[Theorem~3.38]{Evans2005}.

Because of this connection between ultra-metric spaces and real trees,
ultra-metric spaces are often (especially in phylogenetic analysis)
referred to as {\em ultra-metric trees}.
\hfill$\qed$
\end{remark}\sm

The next lemma implies that $\mathbb{U}$ equipped with the Gromov-weak
topology is again Polish.
\begin{lemma}\label{l:21}
  The sub-space $\mathbb{U}\subset\M$ is closed.
\end{lemma}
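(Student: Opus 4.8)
The plan is to show that $\mathbb U$ is closed by using the characterization of Gromov-weak convergence via convergence of distance matrix distributions, and translating the ultra-metricity condition \eqref{e:ultra} into a closed condition on $\nu^\smallx$. First I would observe that, for a metric measure space $(X,r,\mu)$, the ultra-metricity condition ``$r(u,w)\le r(u,v)\vee r(v,w)$ for $\mu$-almost all $u,v,w$'' is equivalent to the statement that the random distance matrix $(R_{ij})_{1\le i<j}$ sampled from $\nu^{\smallx}$ satisfies $R_{13}\le R_{12}\vee R_{23}$ almost surely (using exchangeability, it suffices to control the first three coordinates). Hence
\begin{equation}\label{e:UviaNu}
  \mathbb U = \big\{\smallx\in\mathbb M:\ \nu^\smallx\big(\{\,(r_{ij})\in\R_+^{\binom{\N}{2}}:\ r_{13}\le r_{12}\vee r_{23}\,\}\big)=1\big\}.
\end{equation}
So it remains to show that the set $A:=\{(r_{ij}):\ r_{13}\le r_{12}\vee r_{23}\}$, which is a closed subset of $\R_+^{\binom{\N}{2}}$, has the property that $\{\nu:\ \nu(A)=1\}$ is closed under weak convergence of probability measures on $\R_+^{\binom{\N}{2}}$.

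The key step is precisely this last point: if $\nu_n\Rightarrow\nu$ and $\nu_n(A)=1$ for all $n$ with $A$ closed, then $\nu(A)=1$. This follows from the Portmanteau theorem: for a closed set $A$ one has $\limsup_n\nu_n(A)\le\nu(A)$ only gives the wrong direction, so instead I would note that $\nu_n(A)=1$ means $\nu_n$ is supported on $A$, and by Portmanteau $\nu(U)\le\liminf_n\nu_n(U)$ for open $U$; applying this to $U=A^c$ (open, since $A$ is closed) gives $\nu(A^c)\le\liminf_n\nu_n(A^c)=0$, hence $\nu(A)=1$. Thus if $(\smallx_n)_{n\in\N}$ is a sequence in $\mathbb U$ converging Gromov-weakly to $\smallx\in\mathbb M$, then $\nu^{\smallx_n}\Rightarrow\nu^{\smallx}$ by \eqref{e:convv2}, each $\nu^{\smallx_n}$ is supported on $A$ by \eqref{e:UviaNu}, hence $\nu^{\smallx}$ is supported on $A$, hence $\smallx\in\mathbb U$ again by \eqref{e:UviaNu}. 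This proves $\mathbb U$ is closed; Polishness of $\mathbb U$ then follows since a closed subspace of a Polish space (here $\mathbb M$, by Theorem~1 of \cite{GrePfaWin2006a}) is Polish.

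The main obstacle is the equivalence \eqref{e:UviaNu}, i.e.\ verifying carefully that the ``$\mu$-almost all $u,v,w$'' quantifier in \eqref{e:ultra} is faithfully captured by an almost-sure statement about three independent $\mu$-samples. One direction is immediate: if \eqref{e:ultra} holds off a $\mu^{\otimes 3}$-null set $N\subseteq X^3$, then three independent samples avoid $N$ almost surely, so the triangle-type inequality holds $\nu^\smallx$-a.s.\ among coordinates $1,2,3$, and by exchangeability of $\mu^{\otimes\N}$ it holds simultaneously for every triple of coordinates. The converse requires a Fubini-type argument: if the inequality fails on a set $B\subseteq X^3$ of positive $\mu^{\otimes3}$-measure, then sampling three points lands in $B$ with positive probability, contradicting the almost-sure statement. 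I should also take care that this characterization is independent of the chosen representative $(X,r,\mu)\in\smallx$ — but that is automatic, since $\nu^\smallx$ depends only on the equivalence class by Definition~\ref{Def:17}, and the ultra-metricity property \eqref{e:ultra} is manifestly invariant under measure-preserving isometry. No other step presents real difficulty.
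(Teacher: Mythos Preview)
Your proof is correct and follows essentially the same approach as the paper: both reduce the closedness of $\mathbb U$ to a Portmanteau argument applied to the open set in $\R_+^{\binom{\N}{2}}$ where the ultra-metric inequality fails among the first three sampled points. The paper writes out all three cyclic failures explicitly in defining its open set, whereas you keep only one inequality and invoke exchangeability of $\nu^\smallx$ to recover the others---a cosmetic difference only.
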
\sm

\begin{proof} Let $(\smallu_n)_{n\in\N}$ be a sequence in $\mathbb U$
  and $\smallx\in\mathbb M$ such that $\smallu_n\rightarrow\smallx$ in
  the Gromov-weak topology, as $n\to\infty$. Equivalently, by
  \eqref{e:convv2}, $\nu^{\smallu_n}\Rightarrow\nu^\smallx$ in the
  weak topology on $\mathcal M_1(\mathbb R_+^{\binom{\mathbb N}{2}})$,
  as $n\to\infty$.  Consider the open set
\begin{equation}\label{eq:A}
\begin{aligned}
   &A
  \\
 &:=
   \big\{(r_{i,j})_{1\leq i<j}:\,r_{1,2}> r_{23}\vee
   r_{1,3}\mbox{ or }r_{2,3}> r_{1,2}\vee r_{1,3}\mbox{ or }r_{1,3}> r_{1,2}\vee
   r_{2,3}\big\}.
\end{aligned}
\end{equation}
By the Portmanteau Theorem, $\nu^\smallx(A) \leq \liminf_{n\to\infty}
\nu^{\smallu_n}(A) = 0$.  Thus, (\ref{e:ultra}) holds for
$\mu^{\otimes 3}$-all triples $(u,v,w)\in X^3$.  In other words,
$\smallx$ is ultra-metric.
\end{proof}\sm

(The equivalence class of) a metric measure space $(X,r,\mu)$ is
called {\em compact} if and only if the metric space
$(\mathrm{supp}(\mu),r)$ is compact. Define
\begin{equation}\label{grev31}
   \mathbb M_c
 :=
   \big\{\smallx\in\mathbb{M}:\,\smallx\text{ is compact}\big\}.
\end{equation}
Moreover, we set
\begin{equation}\label{e:grev31a}
   \mathbb{U}_c
 :=
   \mathbb{U}\cap\mathbb{M}_c.
\end{equation}

\begin{remark}[$\mathbb{M}_c$ is not a closed subset of $\mathbb M$]
\begin{itemize}
\item[{}]
\item[(i)] If $\smallx=\overline{(X,r,\mu)}$ is a {\em finite} metric
  measure space, i.e, $\#\mathrm{supp}(\mu)<\infty$, then
  $\smallx\in\mathbb{M}_c$.
\item[(ii)] Since elements of $\mathbb M$ can be approximated by a
  sequence of finite metric measure spaces (see the proof of
  Proposition 5.3 in \cite{GrePfaWin2006a}), the sub-space
  $\mathbb{M}_c$ is not closed. \label{Rem:03} A similar argument
  shows that $\mathbb U_c$ is not closed.
\item[(iii)] In order to establish convergence within the space of
  compact metric measure spaces, we provide a {\em relative
    compactness criterion} in $\mathbb M_c$ in Proposition~\ref{noteP:04}.
  \hfill$\qed$
\end{itemize}
\end{remark}\sm

\subsection{The martingale problem (Theorem~\ref{T:01})}
\label{Sub:mp}

In this subsection we define the tree-valued Fleming-Viot dynamics as
the solution of a well-posed martingale problem.  We start by
recalling the terminology. All proofs are given in
Section~\ref{S:proofs}.
\begin{definition}[Martingale problem] \label{D:01} Let $(E,{\mathcal
    O})$ be a Polish space, $\mathbf
  P_0 \in\mathcal M_1(E)$, $\mathcal F$ a subspace of the space
  ${\mathcal B}(E)$ of bounded measurable functions on $E$ and
  $\Omega$ a linear operator on ${\mathcal B}(E)$ with domain
  $\mathcal F$.

  The law $\mathbf{P}$ of an $E$-valued stochastic process
  $X=(X_t)_{t\geq 0}$ is called a solution of the $(\mathbf
  P_0,\Omega,\mathcal F)$-martingale problem if $X_0$ has distribution
  $\mathbf P_0$, $X$ has paths in the space ${\mathcal
    D}_E([0,\infty))$ of $E$-valued c\`adl\`ag functions, almost
  surely (where $\mathcal D_E([0,\infty)$ is equipped with the
  Skorohod topology) and for all $F\in\mathcal F$,
  \begin{equation}\label{13def}
    \Big(F(X_t)-\int_0^t\mathrm{d}s\,
    \Omega F(X_s)\Big)_{t\geq 0}
  \end{equation}
  is a $\mathbf{P}$-martingale with respect to the canonical
  filtration.

  Moreover, the $(\mathbf P_0,\Omega,\mathcal F)$-martingale problem
  is said to be well-posed if there is a unique solution $\mathbf{P}$.
\end{definition}\sm

Recall that the classical measure-valued Fleming-Viot process
$\zeta=(\zeta_t)_{t\geq 0}$ is a probability measure-valued diffusion
process, which describes the evolution of allelic frequencies; see
e.g.\ \cite{Dawson1993,MR1779100}. In particular, for a fixed time
$t$, the state $\zeta_t\in\mathcal M_1(K)$ records the current
distributions of allelic types on some (Polish) type space $K$.  This
process is defined as the unique solution of the martingale problem
corresponding to the following operator $\widehat \Omega^\uparrow$
(see \cite{EthierKurtz1993}): for functions $\widehat \Phi: \mathcal
M_1(K)\to\mathbb R$ of the form
\begin{equation}\label{pp13a}
  \widehat\Phi(\zeta )
  =
  \big\langle \zeta^{\otimes \mathbb N}, \widehat \phi\big\rangle
  :=
  \int_{K^\N} \zeta^{\otimes \mathbb N}({\rm d} \underline u)\, \widehat\phi\big(
  \underline u\big)
\end{equation}
with $\underline u=(u_1,u_2,...)\in K^\N$ and $\widehat\phi
\in{\mathcal C}_b(K^{\mathbb N})$ depending only on finitely many
coordinates, set
\begin{equation}\label{gen1}
  \widehat\Omega^\uparrow\widehat\Phi(\zeta)
  =
  \frac\gamma 2 \sum_{k,l\geq 1}\Big(\langle  \zeta^{\otimes\mathbb N},
  \widehat\phi\circ \widehat\theta_{k,l} \rangle -
  \langle \zeta^{\otimes\mathbb N}, \widehat\phi \rangle \Big)
\end{equation}
where the \emph{replacement operator}
$\widehat\theta_{k,l}:K^\mathbb{N}\to K^{\mathbb{N}}$ is the map which
replaces the $l^{\mathrm{th}}$ component of an infinite sequence of
types by the $k^{\mathrm{th}}$:
\begin{equation}\label{gen2}
  \widehat\theta_{k,l}
  (u_1,u_2,\ldots, u_{l-1}, u_l, u_{l+1}, \ldots)
  := (u_1,u_2,\ldots, u_{l-1}, u_k, u_{l+1}, \ldots).
\end{equation}
Here and in the following $\gamma\in(0,\infty)$ is referred to as the
{\em resampling rate}.\sm

In order to state the martingale problem for the tree-valued
Fleming-Viot dynamics we need the notion of \emph{polynomials} on
$\mathbb M$.

\begin{definition}[Polynomials]
  A function $\Phi:\mathbb{M}\to\R$
  \label{metricD:01}
  is called a {\em polynomial} if there exists a bounded, measurable
  {\em test function} $\phi: \mathbb R_+^{\binom{\mathbb
      N}{2}}\to\mathbb R$, depending only on finitely many variables
  such that
  \begin{equation}\label{metricpp3}
    \Phi\big(\smallx\big)
    =
    \big\langle\nu^\smallx,\phi\big\rangle
    :=
    \int_{\R_+^{\N\choose 2}}\nu^\smallx(\mathrm d\underline{\underline r})\,\phi\big(\underline{\underline r}\big),
  \end{equation}
  where $\underline{\underline r}:=(r_{i,j})_{1\leq i<j}$.
\end{definition}\sm

\medskip

\begin{remark}[Properties of polynomials]\rm
\begin{itemize}
\item[{}]
  \item[(i)]
      Let $\Phi$ and $\phi$ be as in
    Definition~\ref{metricD:01}. If $\smallx = \overline{(X,r,\mu)}$,
    then
    \begin{equation}\label{metricpp3b}
      \begin{aligned}
        \Phi\big(\smallx\big)
        =
        \int_{X^\N}\mu^{\otimes \mathbb N}(\mathrm{d}(x_1, x_2,\ldots))\,
        \phi\big((r(x_{i},x_{j}))_{1\le i<j}\big),
      \end{aligned}
    \end{equation}
    where $\mu^{\otimes \mathbb N}$ is the $\mathbb N$-fold product measure of $\mu$.\label{Rem:13}
  \item[(ii)]
      If $n\in\mathbb N$ is the minimal number such that there
    exists $\phi\in\mathcal B(\mathbb R_+^{\binom{\mathbb N}2})$,
    depending only on $(r_{i,j})_{1\leq i<j\leq n}$ such that
    \eqref{metricpp3} holds, $n$ is referred to as \emph{degree} and
    $\phi$ as a \emph{minimal test function} of $\Phi$. We write
    $\Phi=\Phi^{n,\phi}$.
  \item[(iii)] For $m\in\mathbb{N}$, let $\Sigma_m$ be the set of
    permutations of $\mathbb N$ which leave $m+1$, $m+2$, ... fixed.
    For $\sigma\in\Sigma_\infty := \bigcup_{m\in\mathbb{N}}\Sigma_m$,
    define
    \begin{equation}\label{permut}
      \widetilde\sigma\big( (r_{i,j})_{1\leq i<j}\big)
      :=
      \big(r_{\sigma(i)\wedge\sigma(j),\sigma(i)\vee\sigma(j)}\big)_{1\leq i<j}.
    \end{equation}
    The \emph{symmetrization} of $\phi$ is given by
    \begin{align}\label{eq:symm}
      \bar\phi = \frac{1}{n!} \sum_{\sigma\in\Sigma_n} \phi\circ
      \widetilde\sigma.
    \end{align}
    By symmetry of $\nu^\smallx$, $ \langle \nu^\smallx, \phi\rangle =
    \langle \nu^\smallx, \bar\phi\rangle$, or equivalently,
    $\Phi^{n,\phi} = \Phi^{n,\bar\phi}$.
  \end{itemize}
  \hfill$\qed$
\end{remark}

Recall from Subsection~\ref{Sub:state} the space ${\mathcal
  B}(\R_+^{\N\choose 2})$ of bounded measurable real-valued functions
on $\R_+^{\N\choose 2}$.  An element $\phi\in {\mathcal
  B}(\R_+^{\N\choose 2})$ is said to be differentiable if for all
$1\le i<j$ the partial derivatives $\frac{\partial \phi}{\partial
  r_{i,j}}$ exist and if $\sum_{1\le i<j}{|\frac{\partial
    \phi}{\partial r_{i,j}}}|<\infty$. In this case we put

\begin{equation}
\label{e:divergenz}
   \langle\nabla \phi, \underline{\underline {2}}\rangle
 :=
   2\sum_{1\le i<j}\frac{\partial \phi}{\partial r_{i,j}}
 =
   \sum_{{1\le i,j}\atop{i\neq j}}\frac{\partial \phi}{\partial r_{i\wedge j, i\vee j}}.
\end{equation}

Denote by ${\mathcal C}_b^1(\R_+^{\N\choose 2})$ the space of all
bounded and continuously differentiable real-valued functions $\phi$
on $\R_+^{\N\choose 2}$ with bounded derivatives.  The function spaces
we use in the sequel are the space of polynomials
\begin{equation}\label{eq:po1}
  \Pi :=
  \big\{\Phi^{n,\phi}\mbox{ as in Remark~\ref{Rem:13}(ii)}:\,n\in\N,\phi\in\mathcal B(\mathbb
  R_+^{\binom{\mathbb N}{2}})\big\}, \end{equation}
and its sub-spaces
\begin{equation}
\begin{aligned}\label{eq:po1k}& \Pi^{0} :=
  \big\{\Phi^{n,\phi}\in\Pi:\,n\in\N,\phi\in{\mathcal C}_b(\mathbb
  R_+^{\binom{\mathbb N}{2}})\big\},
\end{aligned}
\end{equation}
and
\begin{equation}
  \begin{aligned}\label{eq:po1l}
    &\Pi^{1} :=
    \big\{\Phi^{n,\phi}\in\Pi:\,n\in\N,\phi\in{\mathcal C}_b^{1}(\mathbb
    R_+^{\binom{\mathbb N}{2}})\big\}.
\end{aligned}
\end{equation}

\medskip
\medskip

\begin{remark}[Polynomials form an algebra that separates
    points]
  \begin{enumerate}
  \item[{}]
  \item[(i)]   Observe that $\Pi, \Pi^0$ and $\Pi^1$ are algebras of
    \label{rem:05}
    functions. Specifically, given $\Phi^{n,\phi}$, and
    $\Phi^{m,\psi}\in \Pi$,
    \begin{align}\label{eq:qvprep}
      \Phi^{n,\phi}\cdot \Psi^{m,\psi}=\Phi^{n+m,(\phi,\psi)_n}=\Phi^{n+m,(\psi,\phi)_m}
    \end{align}
    where for $\phi,\psi\in{\mathcal B}(\R_+^{\binom{\N}{2}})$ and
      $\ell\in\N$,
    \begin{align}\label{eq:qvprep1}
      (\phi, \psi)_\ell\big(\underline{\underline r}\big) :=
      \phi(\underline{\underline{r}}) \cdot \psi(\tau_\ell
      \underline{\underline{r}}),
    \end{align}
    with $\tau_\ell\big( (r_{i,j})_{1\leq i<j}\big) = (r_{\ell+i,
      \ell+j})_{1\leq i<j}$.
  \item[(ii)] By Proposition~2.6 in~\cite{GrePfaWin2006a}, $\Pi$ and
    $\Pi^0$ separate points in $\mathbb M$. Since ${\mathcal
      C}_b^1(\R_+^{\N\choose 2})$ is dense in the topology of
    point-wise convergence in ${\mathcal C}_b(\R_+^{\N\choose 2})$,
    $\Pi^1$ separates points as well.  \hfill $\qed$
  \end{enumerate}
\end{remark}\sm

\begin{remark}[The Gromov-weak topology]
  Let $\smallx$, \label{rem:exch} $\smallx_1$,
  $\smallx_2,...\in\mathbb M$. Recall from \eqref{e:convv2} that
  $\smallx_n \to\smallx$, as $n\to\infty$, in the Gromov-weak topology
  iff $\nu^{\smallx_n}\Longrightarrow\nu^\smallx$, as
  $n\to\infty$. Equivalently, $\Phi(\smallx_n)\to\Phi(\smallx)$, as
  $n\to\infty$, for all $\Phi\in\Pi^0$ (see Theorem~5 in
  \cite{GrePfaWin2006a}). Notice that $\smallx_n \to\smallx$, as
  $n\to\infty$, if we restrict to $\Pi^1$ or to the set
  $\{\Phi^{n,\bar\phi}: \Phi^{n,\phi}\in\Pi\}$ of symmetric test
  functions. (Compare with Remark~\ref{Rem:13}(iii)). \hfill $\qed$
\end{remark}\sm

To lift the measure-valued Fleming-Viot process to the level of trees
and thereby construct the tree-valued Fleming-Viot dynamics, we
consider the martingale problem associated with the operator
$\Omega^{\uparrow}$ on $\Pi$ with domain $\mathcal D(\Omega^\uparrow) = \Pi^1$.
To define $\Omega^{\uparrow}$ we let for $\Phi=\Phi^{n,\phi}\in\Pi^1$,
\begin{equation}\label{Pi1}
   \Omega^{\uparrow}\Phi
 :=
   \Omega^{\uparrow,\mathrm{grow}}\Phi+\Omega^{\uparrow,\mathrm{res}}\Phi.
\end{equation}

The \emph{growth operator} $\Omega^{\uparrow,\mathrm{grow}}$ reflects
the fact that the population gets older and therefore the genealogical
distances grow at speed $2$ as time goes on. We therefore put
\begin{equation}\label{eq:omega1}
  \Omega^{\uparrow,\mathrm{grow}}\Phi\big(\smallu\big)
  :=
  \big\langle\nu^{\smallu},\langle \nabla \phi, \underline{\underline 2}\rangle\big\rangle.
\end{equation}

For the \emph{resampling operator} let
\begin{align}
\label{eq:omega2}
   \Omega^{\uparrow,\mathrm{res}}\Phi\big(\smallu)
 &:=
    \frac{\gamma}{2}\sum_{1\le k,l\leq n}\Big(\big\langle \nu^\smallu,\phi\circ\theta_{k,l}\big\rangle -\big\langle\nu^\smallu,\phi\big\rangle\Big),
\end{align}
where we put $r_{k,k}=0$ for all $k\ge 1$, and
\begin{equation}\label{pp11b}
  \begin{aligned}
    \big(\theta_{k,l}\big( (r_{i',j'})_{1\leq i'<j'}\big)\big)_{i,j}
    & :=
    \begin{cases}r_{i,j}, & \mbox{ if }i,j\neq l
      \\
      r_{i\wedge k, i\vee k} , & \mbox{ if }j=l,
      \\
      r_{j\wedge k, j\vee k} , & \mbox{ if }i=l.\end{cases}
  \end{aligned}
\end{equation}
Note that $\Omega^\uparrow \Phi \in \Pi$ for all $\Phi\in\Pi^1$.

Our first main result states that the martingale problem associated
with $(\Omega^\uparrow,\Pi^1)$ is well-posed.

\begin{theorem}[Well-posed martingale problem]
  For all $\mathbf P_0 \in \mathcal M_1(\mathbb{U})$, the $(\mathbf
  P_0,\Omega^{\uparrow}, {\Pi^1})$-martingale problem is
  well-posed. \label{T:01}
\end{theorem}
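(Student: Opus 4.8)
The plan is to treat existence and uniqueness separately, with uniqueness via a dual process and existence via approximation by the tree-valued Moran dynamics. For \emph{uniqueness}, I would set up a function-valued (or Markov-chain-valued) dual: the natural candidate is a process that runs the Kingman coalescent backwards on a growing number of ``sampled individuals'' while keeping track of accumulated growth of pairwise distances. Concretely, one tests the solution against polynomials $\Phi^{n,\phi}$ and computes $\Omega^\uparrow \Phi^{n,\phi}$ using \eqref{eq:omega1}–\eqref{pp11b}: the growth part contributes the divergence term $\langle\nu^\smallu,\mathrm{div}(\phi)\rangle$, which acts as a deterministic first-order transport increasing all $r_{i,j}$ at speed $2$, and the resampling part $\Omega^{\uparrow,\mathrm{res}}$ is, up to the factor $\gamma/2$, exactly the generator of a coalescent-type jump chain acting on the index set $\{1,\dots,n\}$ via the maps $\theta_{k,l}$ of \eqref{pp11b}. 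The key algebraic observation is that $\theta_{k,l}$ identifies the $l$-th coordinate with the $k$-th, i.e. it implements a ``merge'' of sampled individuals just as in the classical Fleming--Viot/Kingman moment duality \eqref{gen1}–\eqref{gen2}; hence the dual is a Markov jump process on (finite test functions paired with) partitions of $\mathbb N$, coalescing at rate $\gamma$ per pair, together with a clock tracking elapsed time $t$ that feeds into the transport of distances. One then verifies the duality relation $\mathbf E[\Phi^{n,\phi}(U_t)] = \mathbf E[\text{(dual applied to initial }\mathbf P_0)]$ by the standard generator computation $\Omega^\uparrow \Phi^{n,\phi}(\smallu)$ equals the dual generator applied in the other variable, and concludes that all one-dimensional distributions of any solution are determined; since $\Pi^1$ separates points (Remark~\ref{rem:05}) and is an algebra, a standard argument (e.g. via the time-shifted Markov property and the fact that duality pins down $\mathbf E[\prod_i \Phi_i(U_{t_i})]$) upgrades this to uniqueness of the law on path space.

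For \emph{existence}, the plan is to construct approximating processes by the tree-valued Moran dynamics: for each $N$, run the finite Moran model of $N$ individuals resampling at rate $\gamma$, encode its genealogy at time $t$ as an element $U^N_t \in \mathbb U_c \subset \mathbb M$ (the empirical ultra-metric measure space on the $N$ individuals, with distances growing at speed $2$ between resampling events and a coalescence of two randomly chosen rows/columns at each resampling event). This is a well-defined piecewise-deterministic Markov jump process by standard theory, and a direct computation of its generator applied to a polynomial $\Phi^{n,\phi}$ shows it converges, as $N\to\infty$, to $\Omega^\uparrow\Phi^{n,\phi}$ (the finite-$N$ resampling generator has the ``$k=l$'' diagonal terms and $O(1/N)$ corrections which vanish; the deterministic growth between jumps gives exactly $\langle\nu,\mathrm{div}(\phi)\rangle$). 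One then proves tightness of $(U^N)_N$ in $\mathcal D_{\mathbb M}([0,\infty))$ — using the compact-containment/pre-compactness criterion alluded to in Remark~\ref{Rem:03}(iii) (Proposition~\ref{P:04}) together with an Aldous-type estimate on the polynomials $\Phi^{n,\phi}$, which suffices because $\Pi^1$ separates points and is convergence-determining for the Gromov-weak topology (Remark~\ref{rem:exch}) — and passes to a limit point, which by the generator convergence and the martingale characterization \eqref{13def} solves the $(\mathbf P_0,\Omega^\uparrow,\Pi^1)$-martingale problem (first for $\mathbf P_0$ supported on finite spaces, then for general $\mathbf P_0\in\mathcal M_1(\mathbb U)$ by approximating the initial condition and using continuity of the constructed family in $\mathbf P_0$, together with Lemma~\ref{l:21} to keep the limit in $\mathbb U$).

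The main obstacle I expect is the uniqueness step, specifically making the duality rigorous despite the operator $\Omega^\uparrow$ being \emph{unbounded} (the test functions in $\Pi^1$ are bounded but $\mathrm{div}(\phi)$ need not lead to a bounded family under the coalescent dynamics, since the number of ``active'' coordinates and hence the number of distance variables on which the dual test function depends can only decrease — good — but the transport term must be controlled uniformly in time). The resolution is that the dual is genuinely well-behaved: the coalescent reduces the number of blocks monotonically and reaches a single block in finite time a.s., so the dual function depends on boundedly many coordinates throughout and the elapsed-time-driven growth is linear and integrable against the coalescent's (exponentially fast) absorption; one still must check an integrability/dominated-convergence condition to justify $\frac{d}{dt}\mathbf E[\cdots] = 0$ along the coupled generator, which is where the bulk of the careful estimation lies. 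A secondary technical point is verifying that $\Omega^\uparrow$ maps $\Pi^1$ into $\Pi$ (noted after \eqref{pp11b}) so that the martingale problem is even well-posed as stated, and that the Moran approximants have sample paths in $\mathcal D_{\mathbb M}([0,\infty))$ with the Skorohod topology — routine but needing the Polishness of $\mathbb M$ from \cite{GrePfaWin2006a}.
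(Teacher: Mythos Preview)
Your proposal is correct and follows essentially the same route as the paper: uniqueness via duality with a tree-valued Kingman coalescent (partitions coalescing at rate $\gamma$ together with distance growth), and existence via tightness of the tree-valued Moran dynamics plus generator convergence. One simplification relative to your outline: the ``main obstacle'' you anticipate for the duality largely evaporates, because the duality functions $H^{n,\phi}(\smallu,\smallk)=\langle\nu^\smallu,\phi((\cdot)^\smallp+\underline{\underline r}')\rangle$ are \emph{bounded} (since $\phi$ is), so the generator identity $\Omega^\uparrow H(\cdot,\smallk)=\Omega^\downarrow H(\smallu,\cdot)$ together with Theorem~4.4.11 of Ethier--Kurtz (with $\alpha=\beta=0$) yields the duality relation directly---no delicate integrability estimate against the coalescent absorption time is needed; likewise the two-step ``first finite $\mathbf P_0$, then approximate'' for existence is unnecessary, as one can take any sequence $\mathbf P_0^N\in\mathcal M_1(\mathbb U_N)$ with $\mathbf P_0^N\Rightarrow\mathbf P_0$ and run the tightness/limit argument once.
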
\sm

This leads to the following definition.

\begin{definition}[The tree-valued Fleming-Viot dynamics]
  Fix $\mathbf P_0 \in \mathcal M_1(\mathbb{U})$. The {\rm tree-valued
    Fleming-Viot dynamics} with initial distribution $\mathbf P_0$ is
  a stochastic process with distribution $\mathbf P$, the unique
  solution of the $(\mathbf P_0,\Omega^{\uparrow}, \Pi^1)$-martingale
  problem. \label{Def:01}
\end{definition}\sm

\begin{proposition}[Sample path properties]
  \label{P:07}
  The tree-valued Fleming-Viot dynamics ${\mathcal U}$ has the
  following properties.
\begin{itemize}
\item[(i)] $\mathcal U$ has sample paths in ${\mathcal
    C}_{\mathbb{U}}([0,\infty))$, $\mathbf P$-almost surely.
\item[(ii)] $\mathcal U_t\in \mathbb U_c$, for all $t>0$, $\mathbf
  P$-almost surely.
\end{itemize}
\end{proposition}\sm

\begin{proposition}[Feller property]
  The \label{P:05} tree-valued Fleming-Viot dynamics $\mathcal U$ is a
  strong Markov process. Moreover, it has the Feller property, i.e.,
  $\smallu \mapsto \mathbf E[f(\mathcal U_t)|\mathcal U_0=\smallu]$ is
  continuous if $f\in\mathcal C_b(\mathbb{U})$.
\end{proposition}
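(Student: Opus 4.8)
The plan is to derive the strong Markov and Feller properties directly from the well-posedness of the martingale problem established in Theorem~\ref{T:01}, together with the standard abstract machinery for martingale problems on Polish spaces. The key input is that $\mathbb{U}$, equipped with the Gromov-weak topology, is a Polish space (Lemma~\ref{l:21} together with the results of \cite{GrePfaWin2006a}), that $\Pi^1$ is a separating algebra of bounded continuous functions (Remark~\ref{rem:05} and Remark~\ref{rem:exch}), and that $\Omega^\uparrow$ maps $\Pi^1$ into the bounded measurable functions $\Pi$ on $\mathbb{U}$. Under these hypotheses the general theory (e.g.\ Theorem 4.4.2 in Ethier--Kurtz, or the equivalence of martingale problems with their ``restricted'' versions) yields that well-posedness of the $(\mathbf P_0,\Omega^\uparrow,\Pi^1)$-martingale problem for every initial law $\mathbf P_0\in\mathcal M_1(\mathbb U)$ — which we have, since $\delta_\smallu\in\mathcal M_1(\mathbb U)$ for each $\smallu$ — implies that the family of solutions $(\mathbf P_\smallu)_{\smallu\in\mathbb U}$ started from the deterministic initial conditions forms a strong Markov process. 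This is essentially a soft argument: one checks measurability of $\smallu\mapsto\mathbf P_\smallu$ in the weak topology (which follows from uniqueness plus a measurable-selection argument), and then the strong Markov property is obtained by conditioning on $\mathcal F_\tau$ at a stopping time $\tau$ and using uniqueness to identify the regular conditional law with $\mathbf P_{\mathcal U_\tau}$.

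For the Feller property the plan is as follows. First I would show continuity of $\smallu\mapsto\mathbf P_\smallu[\Phi(\mathcal U_t)]$ for $\Phi\in\Pi^1$ (or even $\Phi\in\Pi^0$), and then extend to general $f\in\mathcal C_b(\mathbb U)$ by a density/approximation argument using that $\Pi^1$ (equivalently $\Pi^0$) is convergence-determining for the Gromov-weak topology. The continuity in $\smallu$ for fixed $t$ I would obtain from a compactness-plus-uniqueness argument: if $\smallu_n\to\smallu$ in $\mathbb U$, then the laws $\mathbf P_{\smallu_n}$ on $\mathcal D_\mathbb{U}([0,\infty))$ are tight (this uses the same moment/compact-containment estimates that go into the existence half of Theorem~\ref{T:01}, together with an Aldous-type criterion for the modulus of continuity expressed through the polynomials $\Phi\in\Pi^1$), every subsequential limit point solves the $(\delta_\smallu,\Omega^\uparrow,\Pi^1)$-martingale problem — because the defining relation \eqref{13def} is preserved under the convergence, using that $\Phi$ and $\Omega^\uparrow\Phi$ are bounded and that $\Omega^\uparrow\Phi$ is continuous on the relevant set (with care at the exceptional atom-containing configurations handled via Proposition~\ref{P:07}(iii)) — and hence by uniqueness equals $\mathbf P_\smallu$. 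Thus the whole sequence converges, giving $\mathbf P_{\smallu_n}[f(\mathcal U_t)]\to\mathbf P_\smallu[f(\mathcal U_t)]$ for every $f\in\mathcal C_b(\mathbb U)$, which is exactly the Feller property as stated.

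The main obstacle I expect is verifying the tightness of $(\mathbf P_{\smallu_n})_n$ on path space in the Gromov-weak topology, because the state space $\mathbb U$ is not locally compact and compact sets there are described by the somewhat delicate pre-compactness criterion referred to in Proposition~\ref{P:04} and Remark~\ref{Rem:03}(iii). One has to control, uniformly in $n$, both a compact-containment condition for $\mathcal U_t$ (using that the initial trees $\smallu_n$ converge and hence their distance matrix distributions form a tight family, propagated forward in time via the growth and resampling dynamics — the growth only shifts distances by the deterministic amount $2t$, and resampling does not increase them) and the oscillation control, which I would phrase through Aldous' criterion applied to the countable separating family $\{\Phi^{n,\phi}\}$ with $\phi$ smooth, using the explicit bounded form of $\Omega^\uparrow\Phi$ in \eqref{Pi1}--\eqref{eq:omega2}. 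Once tightness is in hand, the identification of limit points and the passage to uniqueness are routine given Theorem~\ref{T:01}. I would also note that strong Markov (rather than merely Markov) requires that well-posedness hold from every deterministic start and a check that the path regularity in Proposition~\ref{P:07}(i) — sample paths in $\mathcal C_\mathbb{U}([0,\infty))$ — makes the stopping-time conditioning argument go through without Skorohod-topology complications.
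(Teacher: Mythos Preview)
Your approach is correct in outline but takes a genuinely different route from the paper. The paper does \emph{not} argue via tightness of the path laws $(\mathbf P_{\smallu_n})_n$ and identification of limit points through the martingale problem. Instead it builds an explicit coupling: for $\smallu,\smallu'\in\mathbb U$ one approximates by $\smallu_N,\smallu'_N\in\mathbb U_N$, runs the \emph{coupled} tree-valued Moran dynamics of Section~\ref{sec:coup} (both copies driven by the same Poisson resampling arrows), and passes to a limit coupling $(\mathcal U_t^\smallu,\mathcal U_t^{\smallu'})_{t\ge 0}$ of the two Fleming--Viot dynamics. The key estimate is Proposition~\ref{P:tbc}, which says the modified Eurandom distance is nonincreasing along the coupled Moran dynamics; taking expectations and limits yields the Lipschitz-type bound $\mathbf E[d'_{\mathrm{Eur}}(\mathcal U_t^\smallu,\mathcal U_t^{\smallu'})]\le d'_{\mathrm{Eur}}(\smallu,\smallu')$, from which Feller continuity is immediate. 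Strong Markov is then deduced \emph{from} the Feller property, not prior to it.

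Your compactness-plus-uniqueness argument would also go through: the compact containment is exactly Proposition~\ref{P:02}(ii), and oscillation control via $\Pi^1$ is the same mechanism used in the existence proof. One small point: your worry about continuity of $\Omega^\uparrow\Phi$ ``at atom-containing configurations'' is misplaced --- for $\Phi=\Phi^{n,\phi}\in\Pi^1$, both $\langle\nu^\smallu,\mathrm{div}(\phi)\rangle$ and $\langle\nu^\smallu,\phi\circ\theta_{k,l}\rangle$ are integrals of bounded continuous test functions against $\nu^\smallu$, hence $\Omega^\uparrow\Phi\in\Pi^0$ is continuous on all of $\mathbb U$; no appeal to Proposition~\ref{P:07}(iii) is needed. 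What your abstract route loses relative to the paper is the quantitative contraction estimate in the Eurandom metric; what it gains is that it avoids setting up the coupling machinery of Section~\ref{sec:coup} and the metric of Definition~\ref{def:eur}, relying only on ingredients already used for Theorems~\ref{T:01} and~\ref{T:02}.
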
\sm

\begin{cor}[Quadratic variation]
  Let $\mathcal U = (\mathcal U_t)_{t\geq 0}$ be the tree-valued
  Fleming-Viot dynamics with initial distribution $\mathbf
  P_0\in\mathcal M_1(\mathbb U)$ and $\Phi = \Phi^{n,\phi} \in \Pi^1$,
  Then $\Phi(\mathcal U):=(\Phi(\mathcal U_t))_{t\geq 0}$ is a
  continuous $\mathbf P$-semi-martingale with quadratic
  variation\label{cor:qv2}
\begin{equation}
\label{eq:qv}
  \langle\Phi(\mathcal U)\rangle_t
 =
  \gamma n^2\int_0^t\mathrm{d}s\,
  \big\langle\nu^{\mathcal U_s},
  (\bar\phi, \bar\phi)_n \circ
  \theta_{1,n+1}-(\bar\phi,
  \bar\phi)_n
  \big\rangle.
\end{equation}
\end{cor}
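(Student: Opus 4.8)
The plan is to read off the quadratic variation from the extended martingale problem of Remark~\ref{Rem:07}. The starting point is that, by Theorem~\ref{T:01} and Proposition~\ref{P:07}(i), $t\mapsto\Phi(\mathcal U_t)$ is a continuous $\mathbf P$-semi-martingale for every $\Phi=\Phi^{n,\phi}\in\Pi^1$, and by Remark~\ref{Rem:07} the operator $\Omega^\uparrow$ acts on $f\circ\Phi$ via the Ito-type formula \eqref{e:extend}. The idea is to apply this with $f(x)=x^2$. Since $\Pi^1$ is an algebra (Remark~\ref{rem:05}), $\Phi^2=\Phi^{2n,(\bar\phi,\bar\phi)_n}$ again lies in $\Pi^1$, so both $\Phi(\mathcal U_t)$ and $\Phi(\mathcal U_t)^2$ solve the martingale problem, and the process
\[
M_t:=\Phi(\mathcal U_t)^2-\Phi(\mathcal U_0)^2-\int_0^t\mathrm ds\,\Omega^\uparrow\!\big(\Phi^2\big)(\mathcal U_s)
\]
is a continuous $\mathbf P$-martingale.

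The main computational step is to identify $\Omega^\uparrow(\Phi^2)-2\Phi\,\Omega^\uparrow\Phi$ as a polynomial. Here one uses that $\Omega^{\uparrow,\mathrm{grow}}$ is a derivation: applying $\mathrm{div}$ to $(\bar\phi,\bar\phi)_n$ via the product rule, together with the fact that the replacement maps $\theta_{k,l}$ never raise indices (so the growth part of $\Phi^2$ splits as $2\Phi\cdot\Omega^{\uparrow,\mathrm{grow}}\Phi$), shows the growth contributions cancel. What remains is the resampling part, and by \eqref{e:extend} specialized to $f(x)=x^2$ one gets precisely
\[
\Omega^\uparrow\!\big(\Phi^2\big)(\smallu)-2\Phi(\smallu)\,\Omega^\uparrow\Phi(\smallu)
= \Omega^\uparrow\langle\nu^{\boldsymbol\cdot},\phi\rangle^2(\smallu)-2\langle\nu^\smallu,\phi\rangle\,\Omega^\uparrow\langle\nu^{\boldsymbol\cdot},\phi\rangle(\smallu),
\]
which by \eqref{eq:fpolyb} (the $f''$-term, with $f''\equiv 2$ and the symmetrization identity $\Phi^{n,\phi}=\Phi^{n,\bar\phi}$ from Remark~\ref{Rem:22}(ii)) equals $\gamma n^2\big\langle\nu^\smallu,(\bar\phi,\bar\phi)_n\circ\theta_{1,n+1}-(\bar\phi,\bar\phi)_n\big\rangle$.

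It then follows from the general theory of continuous semi-martingales (e.g.\ that $N_t^2-\langle N\rangle_t$ is the compensated square, applied to $N_t=\Phi(\mathcal U_t)-\Phi(\mathcal U_0)$) that
\[
\langle\Phi(\mathcal U)\rangle_t=\int_0^t\mathrm ds\,\Big(\Omega^\uparrow\langle\nu^{\boldsymbol\cdot},\phi\rangle^2(\mathcal U_s)-2\langle\nu^{\mathcal U_s},\phi\rangle\,\Omega^\uparrow\langle\nu^{\boldsymbol\cdot},\phi\rangle(\mathcal U_s)\Big)=\gamma n^2\int_0^t\mathrm ds\,\big\langle\nu^{\mathcal U_s},(\bar\phi,\bar\phi)_n\circ\theta_{1,n+1}-(\bar\phi,\bar\phi)_n\big\rangle,
\]
which is exactly \eqref{eq:qv}. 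I expect the main obstacle to be the bookkeeping in the second step: one must verify carefully that the growth operator genuinely contributes nothing to $\langle\Phi(\mathcal U)\rangle$ (this is the content of the cancellation in \eqref{e:extend}, reflecting that $\Omega^{\uparrow,\mathrm{grow}}$ is first-order) and that the combinatorial factor $n^2$ emerging from the resampling double sum over pairs of ``new'' coordinates is correct — equivalently, that only the diagonal-type resampling events between the first $n$ and the second $n$ coordinates of the degree-$2n$ polynomial $\Phi^2$ survive after subtracting $2\Phi\,\Omega^\uparrow\Phi$. Both points are already packaged in Remark~\ref{Rem:07}, so the proof is essentially an invocation of \eqref{e:extend} with $f(x)=x^2$ followed by the standard identification of the quadratic variation of a continuous semi-martingale whose $f\circ\Phi$-dynamics is known.
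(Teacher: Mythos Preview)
Your proposal is correct and follows essentially the same route as the paper: compute $\Omega^\uparrow(\Phi^2)-2\Phi\,\Omega^\uparrow\Phi$, observe that the growth contribution cancels since $\Omega^{\uparrow,\mathrm{grow}}$ is a first-order derivation, and reduce the surviving resampling cross-terms to $\gamma n^2\langle\nu^\smallu,(\bar\phi,\bar\phi)_n\circ\theta_{1,n+1}-(\bar\phi,\bar\phi)_n\rangle$ via symmetrization. The only difference is presentational: you invoke the packaged formula \eqref{eq:fpolyb} from Remark~\ref{Rem:07}, whereas the paper carries out the underlying computation explicitly (splitting the resampling sum for $\Phi^2=\Phi^{2n,(\phi,\phi)_n}$ into the block-diagonal terms, which cancel against $2\Phi\,\Omega^{\uparrow,\mathrm{res}}\Phi$, and the cross terms $\theta_{k,n+l}$, which after symmetrization collapse to $n^2$ copies of $\theta_{1,n+1}$). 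Since Remark~\ref{Rem:07} is stated without proof, be aware that in the paper's logical order the Corollary's computation is effectively what justifies the $f''$-term in \eqref{eq:fpolyb}; your sketch of why the growth part vanishes and where the factor $n^2$ comes from is exactly that computation, so nothing is missing.
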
\sm

\begin{remark}[Quadratic variation for a representative] Assume that
  for all $t>0$, $\mathcal U_t = \overline{(U_t,r_t,\mu_t)}$. Then the
  quadratic variation of $\Phi(\mathcal U)$ can be expressed
  as \label{rem:06}
\begin{equation}
  \label{eq:qv2}
  \begin{aligned}
    \langle \Phi(\mathcal U)\rangle_t = \gamma n^2 \int_0^t
    \mathrm{d}s\, \langle \mu_s, \big( \chi_s -
    \langle \mu_s, \chi_s\rangle\big)^2 \rangle,
  \end{aligned}
\end{equation}
where $\chi_s=\chi^\phi_s: U_s\to\R$ is defined as
  \begin{align}\label{name}
    \chi_s(u_1) := \int \mu_s^{\otimes \mathbb N}
    ({\rm d}(u_2,u_3,\ldots))\, \bar\phi ((r_s(u_i, u_j))_{1\leq
      i<j}).
  \end{align}
\hfill$\qed$
\end{remark}\sm

  \begin{remark}[The r\^{o}le of $\mu$]
    Throughout \label{rem:roleII} the paper we encode trees as metric
    measure spaces rather than just metric spaces. In the context of
    resampling, given $\smallu = \overline{(U,r,\mu)}$, the measure
    $\mu$ can be understood as the weak limit of empirical
    distribution of the individuals in the population (which are
    associated with points in $(U,r)$). This observation is in analogy
    to the measure-valued Fleming-Viot processes which arises as the
    large population limit of empirical distributions on type
    space. Moreover, the additional structure of a probability measure
    $\mu$ allows for defining polynomials and is therefore very
    helpful to come up with a suitably large class of generic
    functions on equivalence classes of measure metric spaces.
    \hfill$\qed$
  \end{remark}\sm

\begin{remark}[Extended martingale problem]
  \sloppy The \label{Rem:07} martingale approach characterizes a
  Markov process through a separating
      class
  of martingales.  Here, for example, the operator $(\Omega^\uparrow,
  \Pi^1)$ extends to an operator on the algebra
  \begin{align}
    \label{eq:fpoly0}
    \mathcal F = \{f\circ \Phi: f\in\mathcal B(\mathbb R), \Phi \in
    \Pi\}
  \end{align}
  with domain
  \begin{align}
    \label{eq:fpoly0s}
    {\mathcal F}^{2,1}:=\big\{f\circ \Phi: f\in\mathcal C_b^2(\mathbb R),
    \Phi\in\Pi^1\}
  \end{align}
  as follows (see e.g.\ \cite[Corollary~1.2]{FukushimaStroock1986}):
  \begin{equation}
    \label{e:extend}
    \begin{aligned}
      &\Omega^{\uparrow} (f\circ\Phi)(\smallu)
      \\
      &= f'\big(\Phi(\smallu)\big)\cdot \Omega^{\uparrow}\Phi(\smallu)
      +\tfrac{1}{2}f''\big(\Phi(\smallu)\big) \cdot \gamma n^2 \cdot
      \big\langle\nu^{\smallu}, (\bar\phi, \bar\phi)_n \circ
      \theta_{1,n+1}-(\bar\phi, \bar\phi)_n \big\rangle.
    \end{aligned}
  \end{equation}
  In particular, the tree-valued Fleming-Viot dynamics is the unique
  solution of the $(\Omega^\uparrow,{\mathcal F}^{2,1})$-martingale
  problem.  \hfill$\qed$
\end{remark}\sm

\begin{remark}[Reduced martingale problem]
  In view of Remark~\ref{Rem:07} one is interested in finding a
  preferably minimal class of functions such that the
  martingales~\eqref{13def} uniquely determine the process.  Here, for
  example, we can use the class of {\em prime} polynomials, were we
  want to refer to to $\Phi\in\Pi$ as {\em prime} if $\Phi$ is not of
  the form $\Phi \neq \widehat\Phi\cdot\widetilde\Phi$ for
  non-constant $\widehat\Phi, \widetilde\Phi\in\Pi$. Indeed by
  \eqref{e:extend} together with Corollary~\ref{cor:qv2} it is easy to
  see that an $\mathbb U$-valued process $\mathcal U =(\mathcal
  U_t)_{t\geq 0}$ is the unique solution of the $(\Omega^\uparrow,
  \Pi^1)$-martingale problem iff
  \begin{equation}
    \label{e:martingalevar}
    \begin{aligned}
      &\Big(\Phi({\mathcal U}_t)-\Phi({\mathcal
        U}_0)-\int^t_0\mathrm{d}s\,\Omega^{\uparrow}\Phi({\mathcal
        U}_s)\Big)_{t\ge 0}
    \end{aligned}
  \end{equation}
  is a martingale for all prime $\Phi\in\Pi^1$ with quadratic \label{Rem:09}
  variation given by \eqref{eq:qv}.
  \label{Rem:12}
  \hfill$\qed$
\end{remark}\sm

\subsection{Particle approximation (Theorem~\ref{T:02})}
\label{Sub:conv}
A classical result in population genetics gives the approximation of
the measure-valued Fleming-Viot process by a finite population model
-- the so called Moran model -- in the limit of large population size
(see e.g.\,\cite{Dawson1993, MR1779100}). In this model, ordered pairs
of individuals are replaced by new pairs in a way that the
``children'' choose a parent - which then becomes their common
ancestor - independently at random from the parent pair. In this
subsection we state that also the tree-valued Fleming-Viot dynamics
can be approximated by tree-valued resampling dynamics which
correspond to the Moran model.

We will proceed as follows. For further reference, we provide with
Proposition~\ref{PP:02} a condition for the compact containment
condition for finite population models in a general setting. For
example, the population size in Definition~\ref{def:finite} and
Proposition~\ref{PP:02} is not assumed anymore to be constant, and
$\tau$ denotes the time when the population goes extinct.  We use
Proposition~\ref{PP:02} for the convergence of the tree-valued Moran
dynamics to the tree-valued Fleming-Viot dynamics in the proof of
Theorem \ref{T:02} (where we have a constant population size and
$\tau=\infty$.) Our compact containment condition will be applicable
also in the construction of evolving $\Lambda$-coalescents or
branching trees.

\begin{definition}[Finite population dynamics]
  Let $(\Omega, (\mathcal A_t)_{t\geq 0}, \mathbf P)$ be a
  filtered \label{def:finite} probability space. Let $\mathcal
  I=(\mathcal I_t)_{t\in\R}$ be an adapted process with values in
  $\{\{1,...,n\}:\, n\in\mathbb N_0\}$. For each $t\in\R$, we refer to
  $\mathcal I_t$ as the population at time~$t$. Furthermore, let
  $\preceq =(\preceq_t)_{t\ge 0}$ be an adapted process such that for
  all $t\ge 0$, $\preceq_t$ is a partial order on $\{(i,s):
  s\in(-\infty,t],\,i\in \mathcal I_s\}$ which defines the
  genealogical relationships at all times before $t$ and satisfies the
  following:
  \begin{enumerate}
  \item[(i)] for all $r,s,t \in \mathbb R$ with $0,r\leq s\leq t$,
    $i_r\in \mathcal I_r$, and $i_s\in \mathcal I_s$, $(i_r, r)
    \preceq_s (i_s,s)$ implies that $(i_r,r) \preceq_t (i_s,s)$,
    i.e., order relations from earlier times are
    preserved,
  \item[(ii)] for all $i\in \mathcal I_t$ and $s\leq t$ there is a
    unique $A_s(i,t)\in \mathcal I_s$ such that $(A_s(i,t),s)\preceq_t
    (i,t)$. We say that $A_s(i,t)$ is the ancestor of $i$ at time $s$,
  \item[(iii)] for all $i,j\in \mathcal I_0$ there is an almost surely
    finite time $T^0_{ij}$ such that $A_{T^0_{ij}}(i,t) =
    A_{T^0_{ij}}(j,t)$, i.e., all individuals at time $t=0$ are
    related.
  \end{enumerate}
  Let $\tau:=\inf\{s\ge 0:\,\mathcal I_s=\emptyset\}$ be the lifetime of the
  population. Put then for all $t\le\tau$ and $i,j\in \mathcal I_t$,
  \begin{align}\label{eq:854}
    r_t(i,j)
  :=
    2\big(t - \sup\big\{s\le t:\, A_s(i,t) = A_s(j,t)\big\}\big).
  \end{align}

  The tree-valued population dynamics $(\mathcal U_t)_{t\in[0,\tau)}$
  read off from $(\mathcal I,\preceq)$ and is defined as follows: for
  all $t\in[0,\tau)$,
  \begin{align}\label{eq:853}
    \mathcal U_t:=\overline{(\mathcal I_t, r_t, \tfrac{1}{|\mathcal
        I_t|}\sum_{i\in \mathcal I_t} \delta_i)} \in \mathbb U.
  \end{align}
\end{definition}\sm

\begin{figure}
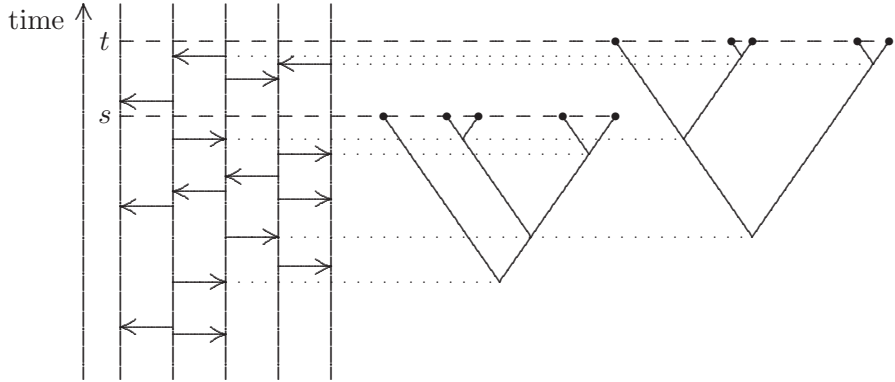

\beginpicture
\setcoordinatesystem units <.7cm,1cm>
\setplotarea x from 0.5 to 12, y from 0 to 5
\plot 1 0 1 5 /
\plot 2 0 2 5 /
\plot 3 0 3 5 /
\plot 4 0 4 5 /
\plot 5 0 5 5 /
\arrow <0.2cm> [0.375,1] from 2 0.6 to 3 0.6
\arrow <0.2cm> [0.375,1] from 2 0.7 to 1 0.7
\arrow <0.2cm> [0.375,1] from 2 1.3 to 3 1.3
\arrow <0.2cm> [0.375,1] from 4 1.5 to 5 1.5
\arrow <0.2cm> [0.375,1] from 3 1.9 to 4 1.9
\arrow <0.2cm> [0.375,1] from 2 2.3 to 1 2.3
\arrow <0.2cm> [0.375,1] from 4 2.4 to 5 2.4
\arrow <0.2cm> [0.375,1] from 3 2.5 to 2 2.5
\arrow <0.2cm> [0.375,1] from 4 2.7 to 3 2.7
\arrow <0.2cm> [0.375,1] from 4 3.0 to 5 3.0
\arrow <0.2cm> [0.375,1] from 2 3.2 to 3 3.2
\arrow <0.2cm> [0.375,1] from 2 3.7 to 1 3.7
\arrow <0.2cm> [0.375,1] from 3 4.0 to 4 4.0
\arrow <0.2cm> [0.375,1] from 5 4.2 to 4 4.2
\arrow <0.2cm> [0.375,1] from 3 4.3 to 2 4.3

\plot 8.2 1.3 6 3.5 /
\plot 8.2 1.3 10.4 3.5 /
\plot 7.2 3.5 8.8 1.9 /
\plot 7.8 3.5 7.5 3.2 /
\plot 9.4 3.5 9.9 3.0 /
\setdots
\plot 2 1.3 8.2 1.3 /
\plot 3 1.9 13 1.9 /
\plot 4 3.0 9.9 3.0 /
\plot 2 3.2 11.7 3.2 /
\plot 3 4.3 12.8 4.3 /
\plot 5 4.2 15.3 4.2 /

\put{\tiny$\bullet$} [cC] at 6 3.5
\put{\tiny$\bullet$} [cC] at 7.2 3.5
\put{\tiny$\bullet$} [cC] at 7.8 3.5
\put{\tiny$\bullet$} [cC] at 10.4 3.5
\put{\tiny$\bullet$} [cC] at 9.4 3.5

\put{\tiny$\bullet$} [cC] at 10.4 4.5
\put{\tiny$\bullet$} [cC] at 15.6 4.5
\put{\tiny$\bullet$} [cC] at 15 4.5
\put{\tiny$\bullet$} [cC] at 13 4.5
\put{\tiny$\bullet$} [cC] at 12.6 4.5

\setsolid
\plot 10.4 4.5 13 1.9 /
\plot 13 1.9 15.6 4.5 /
\plot 15 4.5 15.3 4.2 /
\plot 13 4.5 11.7 3.2 /
\plot 12.6 4.5 12.8 4.3 /

\arrow <0.2cm> [0.375,1] from 0.3 0 to 0.3 5
\put{time} [cC] at -.6 4.8
\setdashes
\plot 1 3.5 10.4 3.5 /
\plot 1 4.5 15.6 4.5 /
\put{$s$} [cC] at 0.7 3.5
\put{$t$} [cC] at 0.7 4.5
\endpicture
\caption{\label{fig:MM}
The graphical representation of a Moran model
  of size $N=5$. By resampling the genealogical relationships between
  individuals change. Arrows between lines indicate resampling events.
  The individual at the tip dies and the other one reproduces. At any
  time, genealogical relationships of individuals $\bullet$, which are
  currently alive, can be read from this graphical representation.}
\end{figure}

For a particular choice of $(\mathcal I,\preceq)$ we obtain the Moran
dynamics. (Compare also with Figure~\ref{fig:MM}).

\begin{definition}[Tree-valued Moran dynamics of population size $N$]
  Fix $N\in\N$.  The tree-valued population Moran dynamics with
  population size $N$ is the tree-valued population dynamics read off
  from $(\mathcal I,\preceq)$ as follows: Put $\mathcal I = (\mathcal
  I_t)_{t\in\mathbb R}$ with ${\mathcal I}_t := \mathcal I^N
  :=\{1,2,...,N\}$ for all $t\in\mathbb R$.
  \label{def:moran}
  Let   $\preceq_0$ be a  random partial order on $(-\infty, 0]\times{\mathcal
    I^N}$ which satisfies~(iii) in Definition~\ref{def:finite}, almost surely.
  Consider an independent family of rate~$\tfrac{\gamma}{2}$-Poisson
  processes $\eta:=\{\eta^{i,j};\,i,j\in \mathcal I^N\}$.  (Note that
  at time $\eta^{i,j}$ an arrows from $i$ to $j$ appears in the
  graphical representation, Figure~\ref{fig:MM}.)

  For any $s,t\in\R$
  with $0\le s\le t$ and $i_s,i_t\in{\mathcal I^N}$, we say that
  $(i_s,s)\preceq_t(i_t,t)$ iff there is a path of descent from
  $(i_s,s)$ to $(i_t,t)$, i.e., if there exist $n\in\N$, $s=:u_0 \le
  u_1<u_2< ...<u_n:= t$ and $j_1:=i_s,
  j_n:=i_t,j_1,...,j_{n-1}\in\{1,2,...,N\}$ such that for all
  $k\in\{1,...,n\}$, $\eta^{j_{k-1},j_k}\{u_k\}=1$ and
  $\eta^{m,j_{k-1}}(u_{k-1},u_k) = 0$ for all $m \in \CI^N$.
\end{definition}\sm

In empirical population genetics models for finite populations rather
than infinite populations are of primary interest.  The next result
states that the known convergence of Moran to Fleming-Viot dynamics
holds also on the level of trees.

\begin{theorem}[Convergence of Moran to Fleming-Viot dynamics]
  For $N\in\N$, let $\mathcal U^N:=(\mathcal U_t)_{t\geq 0}$ be the
  tree-valued Moran dynamics of population size $N$, and
  let \label{T:02} $\mathcal U = (\mathcal U_t)_{t\geq 0}$ be the
  tree-valued Fleming-Viot dynamics.  If $\mathcal
  U^N_0\Rightarrow\mathcal U_0$ weakly with respect to the Gromov-weak
  topology, as $N\to\infty$, then
  \begin{equation}\label{ag2}
    \mathcal U^N \TNo \mathcal U,
  \end{equation}
  weakly with respect to the Skorohod topology on ${\mathcal
    D}_{\mathbb{U}}([0,\infty))$.
\end{theorem}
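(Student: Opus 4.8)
The plan is to prove Theorem~\ref{T:02} by the standard route for martingale-problem characterizations: (i) show that the laws of $(\mathcal U^N)_{N\in\N}$ on ${\mathcal D}_{\mathbb U}([0,\infty))$ are tight, (ii) show that every subsequential limit solves the $(\mathbf P_0, \Omega^\uparrow, \Pi^1)$-martingale problem, and (iii) invoke well-posedness (Theorem~\ref{T:01}) to conclude that the full sequence converges to $\mathcal U$.

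\textbf{Generator convergence.} First I would compute the generator of the tree-valued Moran dynamics $\mathcal U^N$ and show it converges to $\Omega^\uparrow$ on polynomials $\Phi=\Phi^{n,\phi}\in\Pi^1$. The dynamics $\mathcal U^N_t=\overline{(\CI, r_t^\eta, \tfrac1N\sum_i\delta_i)}$ is a piecewise-deterministic Markov process: between resampling events the pseudmetric $r_t^\eta$ grows linearly at speed $2$ in every pair of coordinates (this accounts for the boundary growth and produces, by the chain rule, the term $\langle\nu^{\smallu},\mathrm{div}(\phi)\rangle$), and at a resampling event $\eta^{i,j}$ the distances are updated by the map $\theta$ in \eqref{eq:disttwo}, which on the sampled distance matrix acts like the replacement operator $\theta_{k,l}$ of \eqref{pp11b}. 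Evaluating $\Phi$ at $\mathcal U^N_t$ samples $n$ i.i.d.\ uniform individuals from $\CI$; the resampling part of the generator picks up a factor from the probability that two of the $n$ sampled labels are the pair involved in a jump. A direct count gives $\Omega^{\uparrow,N,\mathrm{res}}\Phi(\smallu)=\frac\gamma2\sum_{1\le k,l\le n}(1-\tfrac1N)^{?}(\langle\nu^\smallu,\phi\circ\theta_{k,l}\rangle-\langle\nu^\smallu,\phi\rangle)+O(1/N)$, so that $\Omega^{\uparrow,N}\Phi\to\Omega^\uparrow\Phi$ uniformly (since $\phi$ and its derivatives are bounded), with an error of order $n^2/N$. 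Care is needed because the sampled individuals may coincide and because the off-diagonal conventions $r_{k,k}=0$ must match up with what happens when a resampling event hits a repeated label; but these are lower-order and handled exactly as in the classical Moran$\to$Fleming-Viot proof.

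\textbf{Tightness.} For tightness I would use the Jakubowski / Ethier--Kurtz criterion: since $\Pi^1$ separates points and is convergence-determining in $\mathbb U$ (Remark~\ref{rem:exch}), it suffices to show the compact containment condition together with tightness of $(\Phi(\mathcal U^N_\cdot))_N$ in ${\mathcal D}_\R([0,\infty))$ for each $\Phi\in\Pi^1$. The latter follows from the Aldous--Rebolledo criterion using the semimartingale decomposition: the drift part has derivative bounded by $\|\mathrm{div}(\phi)\|_\infty + \gamma n^2 \cdot 2\|\phi\|_\infty$ uniformly in $N$, and the predictable quadratic variation is $O(\gamma n^2\|\phi\|_\infty^2 \, t/N$)-close to the Fleming-Viot one from Corollary~\ref{cor:qv2}, hence uniformly bounded on compact time intervals. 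For compact containment I would exploit Proposition~\ref{P:04} (the pre-compactness criterion in $\mathbb M_c$ advertised in Remark~\ref{Rem:03}(iii)): one needs to control the modulus $\nu^{\mathcal U^N_t}$ assigns to large distances and to small balls, uniformly in $t$ on compacts and in $N$; both are controlled because distances grow only linearly (so stay bounded on $[0,T]$ given control at time $0$, which comes from $\mathcal U^N_0\Rightarrow\mathcal U_0$) and because the $2t$-ball structure is inherited from the Kingman-coalescent block counts, whose moments are bounded uniformly in $N$.

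\textbf{Identification of the limit and conclusion.} Given a subsequential limit $\mathcal U^{\infty}$ with law $\mathbf P^\infty$, standard arguments (continuity of $\Phi$, uniform convergence $\Omega^{\uparrow,N}\Phi\to\Omega^\uparrow\Phi$, uniform integrability from the bounds above, and the closedness of the martingale property under weak convergence of cadlag processes) show that $(\Phi(\mathcal U^\infty_t)-\Phi(\mathcal U^\infty_0)-\int_0^t\Omega^\uparrow\Phi(\mathcal U^\infty_s)\,\mathrm ds)_{t\ge0}$ is a $\mathbf P^\infty$-martingale for every $\Phi\in\Pi^1$; moreover $\mathcal U^\infty_0$ has law $\mathbf P_0=\mathrm{law}(\mathcal U_0)$ by hypothesis, and $\mathcal U^\infty$ has cadlag $\mathbb U$-valued paths since $\mathbb U$ is closed (Lemma~\ref{l:21}) and the Skorohod space is preserved under the limit. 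By Theorem~\ref{T:01} the $(\mathbf P_0,\Omega^\uparrow,\Pi^1)$-martingale problem has a unique solution, so $\mathbf P^\infty=\mathbf P$; since every subsequential limit is the same, the whole sequence converges, giving \eqref{ag2}. \textbf{Main obstacle.} I expect the delicate point to be the compact containment condition: unlike the classical measure-valued setting where the state space of a fixed type space is already nice, here one must produce, uniformly in $N$ and in $t\in[0,T]$, a single Gromov-weakly compact subset of $\mathbb U$ containing all the $\mathcal U^N_t$ — this is exactly where Proposition~\ref{P:04} and a uniform moment estimate on the coalescent block-counting process are essential, and getting the estimate to be uniform over the discrete models (including the small-$N$ regime, where the approximation by the coalescent is not exact) requires the most care.
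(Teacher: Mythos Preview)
Your approach is essentially the paper's: generator convergence (Proposition~\ref{P:01}), compact containment in $\mathbb U$ (Proposition~\ref{P:02}(ii)), and identification of every limit point via uniqueness of the martingale problem. One structural caveat: in the paper Theorems~\ref{T:01} and~\ref{T:02} are proved \emph{simultaneously}---existence in Theorem~\ref{T:01} is obtained precisely from the Moran approximation---so when you ``invoke well-posedness (Theorem~\ref{T:01})'' you may only appeal to the \emph{uniqueness} half, which is established independently via the duality of Proposition~\ref{P:dual}; citing Theorem~\ref{T:01} as a black box would be circular. The paper also handles the generator step a bit differently from your sketch: rather than evaluating $\Phi\in\Pi^1$ directly on Moran states and tracking repeated labels, it introduces approximating polynomials $\Phi_N\in\Pi_N^1$ built from the \emph{without-replacement} distance matrix distribution $\nu^{N,\smallu}$ (Definition~\ref{Def:18}) and shows $\sup_{\smallu\in\mathbb U_N}|\Phi_N-\Phi|\to 0$ and $\sup_{\smallu\in\mathbb U_N}|\Omega^{\uparrow,N}\Phi_N-\Omega^\uparrow\Phi|\to 0$; this cleanly absorbs the bookkeeping you flag with ``$(1-\tfrac1N)^{?}$''.
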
\sm

\begin{remark}[Connection with the look-down process]
  Since all the information about trees seems to be contained in the
  look-down construction of \cite{DonKur1996}, one might wonder
  whether one could read off the tree-valued Fleming-Viot dynamics
  from there.  This works for the well-posedness of the Fleming-Viot
  martingale problem as we want to sketch here shortly.  Recall that
  the look-down construction contains the tree-valued Moran dynamics
  for different population sizes on the same probability space as
  follows: Put $\mathcal I\equiv \mathbb{N}$. Choose a partial order
  $\preceq_0$ on $(-\infty, 0]\times\mathbb{N}$ which satisfies~(iii)
  in Definition~\ref{def:finite}.  Consider an independent family of
  rate~$\gamma$-Poisson processes $\eta:=\{\eta^{i,j};\,1\le i<j\}$.
  As in Definition~\ref{def:moran}, let for any $0\le s\le t$ and
  $i_s,i_t\in\mathbb{N}$, $(i_s,s)\preceq_t(i_t,t)$ iff there is a
  path of descent from $(i_s,s)$ to $(i_t,t)$. As in (\ref{eq:854}) we
  can define a process $(\underline{\underline R}_t)_{t\geq 0}$ with
  $\underline{\underline R}_t := (R_t(i,j))_{1\leq i<j}$ which satisfy
  for all $1\le i<j$,
  \begin{equation} \label{eq:9111}
    \begin{aligned}
      R_t(i,j) & \text{ grows linearly at speed 2},\\
      R_t(i,j) & = 0 \text{ if }t\in \eta^{i,j},\\
      R_t(i,j) & = R_{t-}(k,j) \text{ if }t \in \eta^{k,i} \text{ for
        some $k<i$},\\
      R_{t}(i,j) & = R_{t-}(i\wedge k, i\vee k) \text{ if }t\in
      \eta^{k,j} \text{ for some $i\neq
        k<j$}.
    \end{aligned}
  \end{equation}
  \sloppy If $\preceq_0$ is exchangeable, the tree-valued population
  dynamics $({\mathcal U}^N_t)_{t\ge 0}$ read off from the restricted
  graphical representation $(\{1,2,3,...,N\},\preceq)$ equals the
  tree-valued Moran dynamics, for each $N\in\mathbb{N}$.  Moreover,
  the almost sure limit
  \begin{equation} \label{eq:9113} \mathcal U^{\infty}_t :=
    \lim_{N\to\infty}{{\mathcal U}^N_t}
  \end{equation}
  exists for all $t\geq 0$. (Compare with Theorem~4 in
  \cite{GrePfaWin2006a}). This limit easily extends to finitely many
  time points.  By the Kolmogorov extension theorem, existence of a
  process $(\mathcal U^{\infty}_t)_{t\geq 0}$ with these finite
  dimensional distributions follows, as well as convergence of finite
  Moran models in finite dimensional distributions. In addition, with
  a bit more effort it is possible to show that there is a
  modification of $(\mathcal U^{\infty}_t)_{t\geq 0}$ with continuous
  sample paths, as an estimate of $\mathbb{E}[(\Phi({\mathcal
    U}_t)-\Phi({\mathcal U}_s))^4]$ for $\Phi\in\Pi^1$ reveals.

  The process $(\mathcal U^{\infty}_t)_{t\geq 0}$ solves the
  martingale problem for $\Omega^\uparrow$.  Indeed, if $\mathbf
  P_0\in\mathcal M_1(\mathbb U)$ is independent of the Poisson
  processes and its first moment measure equals the distribution of
  $(R_{0}(i,j))_{1\leq i< j}$, then the process
  $(\underline{\underline R}_t)_{t\geq 0}$ is the unique strong Markov
  process with generator $\widetilde{\Omega}$ acting on functions
  $\phi \in\mathcal C_1^b\big(\mathbb R_+^{\binom {\mathbb N} 2}\big)$
  which depend only on finitely many coordinates given by
  \begin{equation} \label{e:tildeOmega}
    \widetilde{\Omega}\phi:=\langle\nabla \phi, \underline{\underline
      {2}}\rangle+ {\gamma}\sum_{1\le
      k<l}\big(\phi\circ\theta_{k,l}-\phi\big)
  \end{equation}
  with $\theta_{k,l}$ as in \eqref{pp11b}.  That is, for $\Phi =
  \Phi^{\phi,n} \in \Pi^1$,
  \begin{equation} \label{e:test}
    \Omega^\uparrow\Phi^{\phi,n}(\smallu)=\Phi^{\widetilde{\Omega}\phi,n}(\smallu)
    = \langle
    \nu^{\smallu},\widetilde{\Omega}\phi\rangle
  \end{equation}
  and therefore by exchangeability, for all $\phi\in\mathcal
  C_b(\mathbb R^{\binom{\mathbb N}{2}})$,
  \begin{equation} \label{e:jochen}
    \mathbb{E}\big[\langle\nu^{{\mathcal
        U}^\infty_t},\phi\rangle\big]=\mathbb{E}\big[\phi\big(\underline{\underline
      R}_t\big)\big].
  \end{equation}
  Since distance matrix distributions are determined uniquely by their
  first moment measure (this follows since polynomials are separating,
  see Remark~\ref{rem:05}), the process $(\mathcal U_t^\infty)_{t\geq
    0}$ solves the $(\mathbf P_0, \Omega^\uparrow, \Pi^1)$-martingale
  problem. 

  However, the above arguments establish convergence of Moran models
  to the tree-valued Fleming-Viot dynamics only in finite-dimensional
  distributions.  A proof of tightness of Moran models in $\mathcal
  D_{\mathbb U}([0,\infty))$ must be carried out to obtain a full
  convergence result as stated in Theorem~\ref{T:02}.  We therefore
  follow a different route, which also has the advantage of not
  explicitely relying on exchangeability. Hence our approach allows
  also for the construction of tree-valued dynamics coming from
  population models with selection and recombination, or more
  generally, also from tree-valued Markov chains arising outside the
  context of population models.
  \label{Rem:14}
  \hfill$\qed$
\end{remark}\sm

\begin{remark}[Universality]\sloppy
  The measure-valued Fleming-Viot process is universal in the sense
  that it is the limit point of frequency paths of various
  exchangeable population models of constant size. (A precise
  condition is found in \cite{MoehleSagitov2001}.)  We conjecture that
  the same universality holds on the level of trees, i.e., the
  tree-valued Fleming-Viot dynamics is the point of attraction of
  various exchangeable tree-valued dynamics. The crucial step for
  convergence of tree-valued processes is tightness of the finite
  models; see Section~\ref{sub:cpapp} in the case of the tree-valued
  Moran dynamics.\label{Rem:15} \hfill $\qed$
\end{remark}\sm

The proof of Theorem~\ref{T:02} relies on a criterion for the compact
containment condition in $\mathbb U$ to hold. We state it here for the
class of population dynamics given in Definition~\ref{def:finite}. It
is based on the number of ancestors and descendants.

For $t\in[0,\tau)$ and $\varepsilon>0$, denote by
\begin{equation}
    \label{eq:861}
    \begin{aligned}
      S_{2\varepsilon} (\mathcal U_t) &:=
      \#\big\{A_{t-\varepsilon}(i,t): i\in \mathcal I_{t}\big\}
    \end{aligned}
  \end{equation}
the {\em number of ancestors of
  $\mathcal I_t$ at time $t-\varepsilon$}, and by
\begin{equation}
  \label{eq:861b}
  \begin{aligned}
    \widetilde S_{2\varepsilon}(\mathcal U_t) &:= \inf_{\mathcal
      J\subseteq \mathcal I_t:\;\#\mathcal J\le
      2\varepsilon\#{\mathcal I}_t}\#\big\{A_{t-\varepsilon}(i,t):\,
    i\in\mathcal I_t\setminus \mathcal J\big\}
  \end{aligned}
\end{equation}
the {\em minimal number of ancestors at time $t-\varepsilon$ whose
  descendants cover a fraction of at least $1- 2\varepsilon$ of the
  time-$t$-population}. For $t\geq \tau$ and $\varepsilon>0$, set
$\widetilde S_{2\varepsilon}(\mathcal U_t) = S_{2\varepsilon}(\mathcal
U_t)=0$. Moreover, for $\mathcal J\subseteq \mathcal I_s$ and $s\leq
t$, let
\begin{align}
  \label{eq:862}
  D_t(s,\mathcal J) := \#\big\{i\in \mathcal I_t:\, A_s(i,t)\in
  \mathcal J\big\}
\end{align}
denote the {\em number of descendants of the set $\mathcal J$ at
  time $t$}.  \sm

The following criterion for a compact containment condition will be
proved in Section~\ref{sub:51}. It uses the setting of finite
population models from Definition~\ref{def:finite}. Recall that the
population size is in general not constant and $\tau$ refers to the
time the population goes extinct.

\begin{proposition}[Compact containment for population dynamics]
  For each $N\in\mathbb N$, let $(\Omega^N,(\mathcal
  A_t^N)_{t\in\R},\mathbf P^N)$, $(\mathcal I^N,\preceq^N)$, and
  $\tau^N$ be as in Definition~\ref{def:finite}. Let $\mathcal
  U^N=(\mathcal U_t^N)_{t\in[0,\tau^N)}$ be the tree-valued population
  dynamics read off from $(\mathcal I^N, \preceq^N)$.

  Assume that the family
  $\{\mathcal U_0^N;\,N\in\N\}$ is tight in $\mathbb
  U$.  \label{PP:02} Furthermore fix $T>0$, and consider the following assumptions:
  \begin{itemize}
  \item[(i)] For all $0<\varepsilon< T$ there exists a
    $\delta=\delta(\varepsilon)>0$ such that for all $s\in[0,T)$,
    $N\in\N$ and ${\mathcal A}^N_{s}$-measurable random subsets
    $\mathcal J^N\subset \mathcal I_s^N$ with $\#{\mathcal J^N}\le
    \delta\cdot \#\mathcal I_s^N$,
    \begin{equation}
      \label{P:171}
      \limsup_{N\in\mathbb N}\mathbf P^N \big\{\sup_{t\in[s,T\wedge\tau^N)}
      \tfrac{D_t^N(\mathcal J^N,s)}{\#\mathcal I_t^N}
      >\varepsilon\big\} \le
      \varepsilon.
    \end{equation}
  \item[(ii.i)] For all $0 < \varepsilon \leq t < T$, the family
    $\{S_{2\varepsilon}(\mathcal U_t^N):\,N\in\N\}$ is tight.
  \item[(ii.ii)] For all $0 < \varepsilon \leq t < T$, the family
    $\{\widetilde S_{2\varepsilon}(\mathcal U_t^N):\,N\in\N\}$ is
    tight.
  \end{itemize}
  Then, the following compact containment conditions hold:
  \begin{itemize}
  \item[(a)] Under (i) and (ii.i), for all $\varepsilon >0$ there
    exists a set $\Gamma_{\varepsilon,T}\subseteq\mathbb U_c$ which is
    compact in $\mathbb U_c$ such that
    \begin{equation}\label{eq:P02b}
      \inf_{N\in\mathbb N}\mathbf{P}^N\big\{\mathcal
      U^N_t\in\Gamma_{\varepsilon,T} \mbox{ for all } t\in[\varepsilon,T\wedge\tau^N)\big\}
      > 1-\varepsilon.
    \end{equation}
  \item[(b)] Under (i) and (ii.ii), for all $\varepsilon >0$ there
    exist a set $\widetilde\Gamma_{\varepsilon,T}\subseteq\mathbb U$
    which is compact in $\mathbb U$ such that
    \begin{equation}\label{eq:P02c}
      \inf_{N\in\mathbb N}\mathbf P^N \big\{\mathcal
      U^N_t\in\widetilde \Gamma_{\varepsilon, T} \mbox{ for all }
      t\in[0,T\wedge\tau^N)\big\} > 1-\varepsilon.
    \end{equation}
  \end{itemize}
\end{proposition}\sm

\subsection{Long-term behavior (Theorem~\ref{T:03})}
\label{Sub:long}
Genealogical relationships in neutral models are frequently studied
since the introduction of the Kingman coalescent in
\cite{Kingman1982a}. This stochastic process describes the genealogy
of a Moran population in equilibrium and its projective limit as the population size tends to infinity.
In this section we formulate the related
convergence result for  the tree-valued resampling dynamics.

Recall that a partition of $\N$ is a collection $\smallp = \{\pi_1,
\pi_2, \ldots\}$ of pairwise disjoint subsets of $\N$, also called
{\em blocks}, such that $\N=\cup_i \pi_i$. The partition $\smallp$
defines an equivalence relation $\sim_{\smallp}$ on $\N$ by
$i\sim_{\smallp}j$ if and only if there exists a partition element
$\pi\in\smallp$ with $i,j\in\pi$.  We denote by $\mathbb S$ the set of
partitions of $\N$ and define for each $k\in\N$ the restriction
$\rho_k$ on $\mathbb S$ to the set $\mathbb{S}_k$ of partitions of
$\{1,2,...,k\}$ by $\rho_k\circ\smallp := \{\pi_i\cap\{1,...,k\}:
\pi_i\in\smallp\}$. Each $\smallp\in\mathbb{S}$ can be identified with
the sequence
$(\rho_1\circ\smallp,\rho_2\circ\smallp,...)\in\mathbb{S}_1\times\mathbb{S}_2\times
...$.  Give $\mathbb{S}$ the topology it inherits as a subset of
$\mathbb{S}_1\times\mathbb{S}_2\times ...$ with the product of
discrete topologies. So $\mathbb{S}$ is compact and metrizable and
hence Polish.

Starting in $\mathcal P_0 = \smallp\in\mathbb{S}$, the {\em Kingman
  coalescent} is the unique $\mathbb S$-valued strong Markov process
$\mathpzc K=(\mathpzc K_s)_{s\geq 0}$ such that any pair of blocks
merges at rate $\gamma$ (see, for example,
\cite{Kingman1982b,Pit1999}).  Every realization $\smallk =
(\smallk_s)_{s\geq 0}$ of $\mathpzc K$ gives a pseudo-metric
$r^{\smallk}$ on $\N$ defined by
\begin{equation}\label{e:lambda2}
  r^{\smallk}\big(i,j\big) := 2\cdot \inf\big\{s\ge
  0:\,i\sim_{\smallk_s}j\big\},
\end{equation}
i.e., $r^{\smallk}\big(i,j\big)$ is proportional to the time needed
for $i$ and $j$ to coalesce. Note that $(\N, r^{\smallk})$ is
ultra-metric and  that $r^\smallk(i,j)$
can be thought of as a genealogical distance. Denote then by
$(L^{\smallk},r^{\smallk})$ the completion of $(\N,r^{\smallk})$.
Clearly, $(L^{\smallk}, r^{\smallk})$ is also ultra-metric.  Define
$H_N$ to be the map which takes a realization of the
$\mathbb{S}$-valued coalescent and maps it to (an equivalence class
of) a pseudo-metric measure space by
\begin{equation}\label{e:Hn}
  H_N:\,\smallk\mapsto \overline{\Big(L^{\smallk},r^{\smallk},\mu^{\mathpzc K}_N :=
    \tfrac{1}{N}\sum\nolimits_{i=1}^N\delta_i\Big)}.
\end{equation}
Notice that for each $N$, the map $H_N$ is continuous.

By Theorem~4 in \cite{GrePfaWin2006a}, there exists a $\mathbb
U$-valued random variable $\mathcal U_\infty$ such that
\begin{equation}
\label{ag2b}
H_N(\mathpzc K)\TNo \mathcal U_\infty,
\end{equation}
weakly with respect to the Gromov-weak topology. The limit object $\mathcal
U_\infty$ is called the \emph{Kingman measure tree}. Since the Kingman
coalescent comes immediately down from $\infty$, the Kingman measure
tree is compact (see \cite{Evans2000}).

\begin{theorem}[Convergence to the Kingman measure tree]
    Let $\mathcal U=(\CU_t)_{t\geq 0}$ be the tree-valued
  Fleming-Viot
  \label{T:03} dynamics starting in $\mathcal U_0$ and ${\mathcal
    U}_\infty$ the Kingman coalescent measure tree. Then
\begin{equation}\label{ag4}
  \CU_t \Tto \mathcal U_\infty.
\end{equation}
In particular, the distribution of $\mathcal U_\infty$ is the unique
equilibrium distribution of the tree-valued Fleming-Viot dynamics.
\end{theorem}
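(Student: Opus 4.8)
The plan is to establish convergence in two independent steps: first, that the law of $\mathcal U_\infty$ (the Kingman measure tree) is a \emph{stationary} distribution for the tree-valued Fleming-Viot dynamics, and second, that the dynamics started from an arbitrary $\mathcal U_0\in\mathbb U$ converges in law to this stationary distribution as $t\to\infty$; uniqueness of the equilibrium then follows from the convergence statement. For stationarity, I would exploit the duality between the tree-valued Fleming-Viot dynamics and the Kingman coalescent (the function-valued dual driving the martingale problem for $\Omega^{\uparrow}$, which is implicit in the operator~\eqref{eq:omega2} together with the growth term~\eqref{eq:omega1}). The key identity is that, for a polynomial $\Phi^{n,\phi}\in\Pi^1$, the semigroup acting on $\Phi$ can be computed via the dual coalescent process on $\{1,\dots,n\}$ run together with the deterministic growth of distances at speed $2$. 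Applying this to the distance matrix distribution of the stationary Kingman measure tree — which by construction~\eqref{e:Hn}--\eqref{ag2b} is the $n\to\infty$ limit of the finite Kingman coalescent genealogies — one checks that the coalescent-plus-growth operation leaves $\nu^{\mathcal U_\infty}$ invariant, because running the forward dual coalescent and then reading off pairwise coalescence times reproduces the same distribution (this is the standard stationarity of the coalescent genealogy under the Moran/Fleming-Viot flow, lifted to the tree level). Hence $\mathbf E[\Phi^{n,\phi}(\mathcal U_t)] = \Phi^{n,\phi}(\mathcal U_\infty)$ for all $t$ when $\mathcal U_0 \stackrel{d}{=}\mathcal U_\infty$, and since $\Pi^1$ separates points (Remark~\ref{rem:05}) and is convergence-determining in the Gromov-weak topology (Remark~\ref{rem:exch}), the law of $\mathcal U_\infty$ is stationary.

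For the convergence from an arbitrary initial state, I would again use duality: for fixed $\Phi = \Phi^{n,\phi}\in\Pi^1$, write $\mathbf E[\Phi(\mathcal U_t)\mid \mathcal U_0 = \smallu]$ as an expectation over the Kingman coalescent $(\mathpzc K_s)_{0\le s\le t}$ on $n$ labels, where the test function $\phi$ is evaluated on the distance matrix obtained by adding $2t$ to every entry corresponding to a pair not yet coalesced by time $t$ (contributing the genealogy inside $\smallu$ itself) and $2(t-s)$ for a pair that coalesces at time $s$. As $t\to\infty$, the Kingman coalescent on $n$ labels comes down to a single block in finite time a.s., so with probability tending to $1$ all $n$ samples have coalesced before time $t$; on this event the matrix entries are $2$ times the coalescence times of $\mathpzc K$, which are exactly the entries distributed according to $\nu^{\mathcal U_\infty}$ restricted to the first $n$ coordinates (by definition of the Kingman measure tree). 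The contribution of the complementary event — some pair has not coalesced by $t$, so its genealogical distance still ``remembers'' $\smallu$ through the $2t + r_0(\cdot,\cdot)$ term — has probability going to $0$ uniformly in $\smallu$, while $\phi$ is bounded; hence $\mathbf E[\Phi(\mathcal U_t)\mid\mathcal U_0=\smallu]\to\langle\nu^{\mathcal U_\infty},\phi\rangle = \Phi(\mathcal U_\infty)$. Again invoking that $\Pi^1$ is convergence-determining gives $\mathcal U_t\Rightarrow\mathcal U_\infty$, and since the limit does not depend on $\mathcal U_0$, stationarity plus this convergence forces $\mathcal U_\infty$ to be the \emph{unique} equilibrium.

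The main obstacle I anticipate is making the duality argument rigorous at the level of the \emph{tree} rather than just the type-frequencies: one must verify that the operator $\Omega^{\uparrow}$ from~\eqref{Pi1} genuinely admits the coalescent-with-growth as a (function-valued) dual when applied to polynomials, i.e. that $\frac{d}{dt}\mathbf E[\Phi^{n,\phi}(\mathcal U_t)]$ matches the generator of the dual process acting on $(\phi, t)$, with the $\Omega^{\uparrow,\mathrm{grow}}$ term producing exactly the linear-in-$t$ drift of the uncoalesced distances. This should follow from the well-posedness of the martingale problem (Theorem~\ref{T:01}) together with a standard duality-uniqueness lemma, but the bookkeeping of how the replacement operators $\theta_{k,l}$ in~\eqref{pp11b} translate into merging of coalescent blocks, and how this interacts with the growing diagonal band of $2t$'s, needs care. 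A secondary technical point is justifying the uniform-in-$\smallu$ control of the ``non-coalesced'' error term; this is where one uses that the Kingman coalescent \emph{comes down from infinity}, so even the full $n=\infty$ genealogy has a finite depth with overwhelming probability, but since we work with fixed finite $n$ for each polynomial $\Phi$, the estimate $\mathbf P[\mathpzc K_t \text{ has} >1 \text{ block on } \{1,\dots,n\}]\le \binom n2 e^{-\gamma t}$ suffices and is clean.
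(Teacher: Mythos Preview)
Your duality argument for the convergence of polynomial moments $\mathbf E[\Phi(\mathcal U_t)]\to\mathbf E[\Phi(\mathcal U_\infty)]$ is correct and is exactly what the paper does (via Proposition~\ref{P:dual}). The stationarity paragraph is redundant---convergence from every initial point already forces uniqueness of the equilibrium---but not wrong.

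The genuine gap is your claim that $\Pi^1$ is \emph{convergence-determining} for probability measures on $\mathbb U$, with Remark~\ref{rem:exch} as justification. That remark says only that $\Pi^0$ (hence $\Pi^1$) determines the Gromov-weak topology on $\mathbb U$ itself, i.e., $\smallx_n\to\smallx$ iff $\Phi(\smallx_n)\to\Phi(\smallx)$ for all $\Phi\in\Pi^0$. This is a statement about convergence of \emph{points}, not about weak convergence of \emph{random elements}. Since $\mathbb U$ is Polish but not locally compact, a point-separating algebra of bounded continuous functions need not be convergence-determining for $\mathcal M_1(\mathbb U)$: convergence of $\int\Phi\,\mathrm{d}\mu_n$ for all $\Phi\in\Pi^1$ does not by itself rule out escape of mass in the non-compact space $\mathbb U$.

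The paper closes this gap by proving tightness of $\{\mathcal U_t:\,t>1\}$ directly, using the compactness criterion for $\mathcal M_1(\mathbb M)$ from~\cite{GrePfaWin2006a} (tightness of the first-moment distance distributions $\mathbf E[w_{\mathcal U_t}]$ together with a uniform lower bound on ball masses, both controlled via the equilibrium coalescent), and then invoking Lemma~3.4.3 of~\cite{EthierKurtz86}: tightness plus convergence of integrals against a \emph{separating} class yields weak convergence. Your argument needs either this tightness step or an independent proof that $\Pi^1$ is convergence-determining on $\mathbb U$; neither follows from Remark~\ref{rem:exch}, and the paper supplies the former.
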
\sm

\begin{remark}[Exchange of limits] Recall from Definition
  \ref{def:moran} the tree-valued Moran dynamics $\{\mathcal
  U^N=({\mathcal U}^N_t)_{t\ge 0};N\in\N\}$. It is straightforward to
  check that for all $N\in\N$ and for all possible initial states,
  $\mathcal U^N_t\Tto H_N(\mathpzc K)$, and therefore the limits
  $N\to\infty$ (see Theorem \ref{T:02}) and $t\to\infty$ (see Theorem
  \ref{T:03}) can be exchanged due to \eqref{ag2b}.  \hfill$\qed$
\end{remark}

\sm

\subsubsection*{Outline}
The rest of the paper is organized as follows. As an application we
study the evolution of subtree length distributions in
Section~\ref{S:kurtz}. A duality relation of the tree-valued
Fleming-Viot dynamics to the tree-valued Kingman coalescent is given
in Section~\ref{S:unique}. In Section~\ref{S:finite} we give a formal
construction of tree-valued Moran dynamics using well-posed martingale
problems. The Moran models build, as shown in Section~\ref{sub:51}, a
tight sequence. Duality and tightness provide the tools necessary for
the proof of Theorems~\ref{T:01} through~\ref{T:03}, which are carried
out in Section~\ref{S:proofs}. In Section~\ref{S.proofapp} we give the
proofs of the applications of Section~\ref{S:kurtz}.\sm

\section[Subtree length distributions]{Application: Subtree length
  distribution (Theorems~\ref{T:04} and~\ref{T:05})}
\label{S:kurtz}
In this section we investigate the distribution of the vector
containing the lengths of the subtrees spanned by subsequently sampled
points, which is referred to as the \emph{subtree length
  distribution}. All proofs are given in Section~\ref{S.proofapp}.

The main result in Subsection~\ref{Sub:subtree} is that the subtree
length distribution uniquely determines ultra-metric measure spaces
(Theorem 4). In Subsection~\ref{Sub:mpLambda} the corresponding
martingale problem and its well-posedness is stated (Theorem 5). In
Subsection~\ref{Sub:explicit} we study with the mean sample Laplace
transform a special functional of the subtree length distribution.
\sm

\subsection{The subtree length distribution (Theorem~\ref{T:04})}
\label{Sub:subtree}
Recall from Remark~\ref{Rem:01} that we can isometrically embed any
ultra-metric space $(U,r_U)$ via a function $\varphi$ into a
path-connected space $(X,r_X)$ which satisfies the four-point
condition \eqref{grev30} such that $X\setminus X^\circ$ is isometric
to $(U,r_U)$. For a sequence $u_1,...,u_n\in U$ with $n\in\N$, let
\begin{equation}\label{e:length}
\begin{aligned}
  L_n^{(U,r_U)}\big(& \{u_1,...,u_n\}\big) := L_n^{(X,r_X)}\big(
  \{\varphi(u_1),...,\varphi(u_n)\}\big)
  \\
  &:= \mbox{length of the subtree of $(X,r)$ spanned by
    $\{\varphi(u_1),...,\varphi(u_n)\}$},
\end{aligned}
\end{equation}
where for an $\R$-tree $(X,r_X)$ with finitely many leaves the length
of the tree is defined as the total mass of the one-dimensional
Hausdorff measure on $(X,{\mathcal B}(X))$.

Note that the length of the tree spanned by a finite sample is a
function of their mutual distances as we state next.

\begin{lemma}[Total length of a sub-tree spanned by a finite subset]
 \label{L:02}
 For a metric space $(X,r_X)$ satisfying the four point condition
 \eqref{grev30} and for $x_1,...,x_n\in X$,
\begin{equation}\label{pp4}
\begin{aligned}
  L_n^{(X,r_X)}\big(\{x_1,...,x_n\}\big) &=
  \frac{1}{2}\inf\big\{\sum_{i=1}^n
  r(x_i,x_{\sigma(i)});\,\sigma\in\Sigma^1_n\big\},
\end{aligned}
\end{equation}
where $\Sigma^1_n:=\{\text{permutations of $\{1,...,n\}$ with one cycle}\}$.
\end{lemma}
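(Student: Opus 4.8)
I want to prove the formula
$$
L_n^{(X,r_X)}\big(\{x_1,\dots,x_n\}\big)=\tfrac12\inf\Big\{\sum_{i=1}^n r(x_i,x_{\sigma(i)}):\sigma\in\Sigma_n^1\Big\}
$$
by reducing the problem to the finite subtree $T$ of $(X,r_X)$ spanned by $\{x_1,\dots,x_n\}$, which is itself a finite $\mathbb R$-tree (a finite metric space satisfying the four-point condition is realized as a finite simplicial tree with edge lengths, and its total length is the sum of the edge lengths; this is classical, and it is what the one-dimensional Hausdorff measure computes).  So without loss of generality $(X,r_X)=T$ is a finite tree whose leaf set is contained in $\{x_1,\dots,x_n\}$.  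The key observation is the ``cycle lemma'': for a one-cycle permutation $\sigma$ of $\{1,\dots,n\}$, the closed walk $x_{\sigma^0(1)}\to x_{\sigma(1)}\to x_{\sigma^2(1)}\to\cdots\to x_{\sigma^n(1)}=x_{\sigma^0(1)}$ visits all the $n$ points, and since $T$ is a tree each geodesic segment $[x_i,x_{\sigma(i)}]$ is the unique path between its endpoints.  A closed walk in a tree traverses each edge an even number of times, hence $\sum_i r(x_i,x_{\sigma(i)})\ge 2\,\mathrm{length}(T)$ for every such $\sigma$, because the walk must cover every edge of $T$ at least twice (it is connected and returns to its start).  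This gives the lower bound $\inf_\sigma\sum_i r(x_i,x_{\sigma(i)})\ge 2L_n$.

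**Matching upper bound.**  For the reverse inequality I exhibit a one-cycle permutation achieving equality.  This is precisely a depth-first (Euler) tour of $T$: root $T$ at $x_1$, perform a depth-first traversal visiting the leaves (hence all $x_i$, after possibly suppressing degree-two vertices that are among the $x_i$) in some cyclic order $x_{i_1},x_{i_2},\dots,x_{i_n}$, and let $\sigma$ be the $n$-cycle sending $i_k\mapsto i_{k+1}$ (indices mod $n$).  The Euler tour of a tree traverses each edge exactly twice, and the concatenation of the geodesics $[x_{i_k},x_{i_{k+1}}]$ is exactly that Euler tour (each consecutive pair is joined by the unique tree-path, and these paths glue along the tour), so $\sum_k r(x_{i_k},x_{i_{k+1}})=2\,\mathrm{length}(T)=2L_n$.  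Combining the two bounds yields the claimed identity.

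**Main obstacle.**  The conceptually delicate point is the reduction in the first paragraph: one must justify that a metric space satisfying the four-point condition, restricted to a finite set and then taking the spanned subtree inside $(X,r_X)$, really is a finite $\mathbb R$-tree whose length (total one-dimensional Hausdorff measure) equals the sum of its edge lengths, and that this length is unchanged whether computed in $(X,r_X)$ or in the abstract finite tree realizing the same distances.  I would handle this by invoking the standard tree-realization theorem for four-point metrics (e.g.\ Dress, or \cite{DreMouTer96}) to get a canonical finite $\mathbb R$-tree $(T,r_T)$ with a distance-preserving map of $\{x_1,\dots,x_n\}$, note that the isometric image of $T$ is the convex hull of $\{x_1,\dots,x_n\}$ in $(X,r_X)$ (convexity follows from uniqueness of geodesics, itself a consequence of the four-point condition), and observe that the Hausdorff-measure length is an isometry invariant.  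Once this reduction is in place, the two combinatorial bounds above are routine; the only care needed is the bookkeeping when some $x_i$ is an interior point of $T$ rather than a leaf, which does not affect either the Euler-tour argument or the even-traversal argument.
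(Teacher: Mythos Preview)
Your proof is correct and follows essentially the same route as the paper: reduce to the finite spanning $\mathbb R$-tree $T$, observe that a one-cycle permutation induces a closed walk in $T$ which must traverse every edge an even (hence $\ge 2$) number of times, and then exhibit a tour that uses every edge exactly twice.

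The only genuine difference is in the upper bound. You \emph{construct} the optimal cycle directly via a depth-first (Euler) tour of $T$, reading off the $x_i$ in order of first visit; this yields $\sum_k r(x_{i_k},x_{i_{k+1}})=2\,\mathrm{length}(T)$ immediately. The paper instead argues by contradiction: it takes a closed tour of minimal length, assumes some edge is traversed at least four times, isolates four points $x_i,x_j,x_k,x_l$ witnessing this, and shows that reordering them shortens the tour. Your constructive argument is shorter and more transparent; the paper's argument has the mild advantage of not needing to check that the DFS ordering still works when some $x_i$ are internal vertices of $T$ (a point you correctly flag but treat informally).
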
\sm

To specify the distribution of the length of the subtrees of
subsequently sampled points we consider the map
\begin{equation}
\begin{aligned}
\label{e:lunder}
   \underline\ell:\left\{\begin{array}{cl}\R_+^{\N\choose 2}&\to\R_+^\N\\
   \underline{\underline{r}}&\mapsto (0,\ell_2(\underline{\underline{r}}),\ell_3(\underline{\underline{r}}),...),\end{array}\right.
\end{aligned}
\end{equation}
where for each $n\in\N$,
\begin{equation}\label{qqq8}
\begin{aligned}
   \ell_n(\underline{\underline r})
 :=
   \frac{1}{2}\inf\big\{\sum_{i=1}^n
   r_{i,\sigma(i)};\,\sigma\in\Sigma^1_n\big\}.
  \end{aligned}
\end{equation}
We then define the \emph{subtree length distribution} of
$\smallu\in\mathbb U$ by
\begin{equation}\label{grev33}
  \xi(\smallu)
  :=
  \underline\ell_\ast\nu^{{\smallu}}\in{\mathcal M}_1(\R_+^\N).
\end{equation}

The first key result states that the subtree length distribution
uniquely characterizes ultra-metric measure spaces.

\begin{theorem}[Uniqueness and continuity of tree lengths
  distribution]
  The map $\xi:\,\mathbb U\to{\mathcal M}_1(\R_+^\N)$ from
  \eqref{grev33} is injective. Let $\xi(\mathbb U)\subseteq {\mathcal
    M}_1(\R_+^\N)$ be equipped with the weak topology and $\R_+^\N$
  with the product topology. Then, $\xi$ and $\xi^{-1}: \xi(\mathbb
  U)\to\mathbb U$ are continuous.
  \label{T:04}
\end{theorem}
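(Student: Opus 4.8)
The plan is to prove continuity and injectivity separately, with injectivity being the substantive part. For continuity, I would argue that $\xi = \underline\ell_\ast \circ (\smallu \mapsto \nu^\smallu)$ is a composition of continuous maps. By the definition of the Gromov-weak topology (see \eqref{e:convv2}), $\smallu \mapsto \nu^\smallu$ is continuous into $\mathcal M_1(\R_+^{\binom{\N}{2}})$ with the weak topology. So it suffices to check that $\underline\ell: \R_+^{\binom{\N}{2}} \to \R_+^\N$ from \eqref{e:lunder} is continuous (with product topologies), since then $f \mapsto \underline\ell_\ast f$ is weakly continuous. But each coordinate $\ell_n$ is a finite infimum of finite sums of coordinate projections $\underline{\underline r} \mapsto r_{i,\sigma(i)}$, hence continuous, and a map into a product is continuous iff each coordinate is; so $\underline\ell$ is continuous and we are done with this half.

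For injectivity, suppose $\xi(\smallu) = \xi(\smallv)$ for $\smallu = \overline{(U,r_U,\mu)}$ and $\smallv = \overline{(V,r_V,\nu)}$ in $\mathbb U$. By Gromov's reconstruction theorem (Definition~\ref{Def:17} and the surrounding discussion), it is enough to show $\nu^\smallu = \nu^\smallv$ as elements of $\mathcal M_1(\R_+^{\binom{\N}{2}})$. The key structural fact is that, in an ultra-metric space, the full distance matrix of a finite sample $(u_1,\dots,u_n)$ is encoded by the tree it spans together with the combinatorial labeling, and — crucially for the induction — the vector of subtree lengths $(\ell_2,\dots,\ell_n)$ of the \emph{ordered} sample, together with the lower-order distance matrix, determines the next point's position in the tree. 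Concretely, I would try to show by induction on $n$ that the law under $\nu^\smallu$ of the finite matrix $(r_{i,j})_{1\le i<j\le n}$ is determined by the law under $\xi(\smallu)$ of $(\ell_2,\dots,\ell_n)$. The base case $n=2$ is immediate since $\ell_2(\underline{\underline r}) = \tfrac12 \cdot 2 r_{1,2} = r_{1,2}$, so $\ell_2$ \emph{is} the first distance. For the inductive step, given the tree spanned by $u_1,\dots,u_{n-1}$ (equivalently the matrix $(r_{i,j})_{1\le i<j\le n-1}$), adding $u_n$ attaches a new branch; by ultra-metricity $u_n$ joins the existing subtree at some point at height $h = \tfrac12 \min_{j<n} r_{j,n}$ along the path from the (conceptual) root, and once $h$ is known the entire new row $(r_{j,n})_{j<n}$ is determined by the ultra-metric structure of the old tree (all $r_{j,n}$ equal $2h$ except they reduce to old distances for the $j$ lying in the subtree below the attachment point — more precisely $r_{j,n} = \max(2h, \text{old height of the branch point of } u_j)$, which is forced). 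And the increment $\ell_n - \ell_{n-1}$ equals exactly the length of the newly added branch, i.e. $\ell_n - \ell_{n-1} = h - (\text{height at which } u_n\text{'s attachment point sits below already-covered tree})$; disentangling this requires care but should pin down $h$ as a function of $\ell_n$ and the already-reconstructed matrix. Running this through the joint law (not just marginals at fixed $n$) gives equality of all finite-dimensional distributions of $\nu^\smallu$ and $\nu^\smallv$, hence $\nu^\smallu = \nu^\smallv$, hence $\smallu = \smallv$.

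The main obstacle I anticipate is the inductive step for injectivity: making precise that the increment $\ell_n - \ell_{n-1}$, as a random variable whose law is known, together with the (inductively reconstructed) law of $(r_{i,j})_{i<j\le n-1}$, determines the \emph{joint} law of $(r_{i,j})_{i<j\le n}$. The subtlety is that $\ell_n - \ell_{n-1}$ is the length of the new pendant edge, but the \emph{attachment height} $h$ of $u_n$ is what we actually need, and $h$ is recovered from the new edge length only if we also know how much of the path from $u_n$ toward the root was \emph{already present} in the subtree on $u_1,\dots,u_{n-1}$ — which is a measurable function of the old matrix, but one must verify it is the \emph{right} function and that no information is lost when passing to distributions. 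I would handle this by writing an explicit deterministic map $\Psi_n$ such that $(r_{i,j})_{i<j\le n} = \Psi_n\big((r_{i,j})_{i<j\le n-1},\, \ell_n\big)$ pointwise on the support of $\nu^\smallu$ (using ultra-metricity and Lemma~\ref{L:02}), and then push forward. Establishing this identity $\nu^\smallu$-almost surely, carefully using that \eqref{e:ultra} holds $\mu^{\otimes 3}$-a.s., is the technical heart of the argument; the rest is bookkeeping with the reconstruction theorem.
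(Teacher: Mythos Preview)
Your continuity argument is correct and essentially the paper's: $\underline\ell$ is coordinatewise a finite minimum of continuous functions, hence continuous, and pushing forward $\nu^\smallu$ by it is weakly continuous.

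Your injectivity argument has a genuine gap at the inductive step: the deterministic map $\Psi_n$ you want does not exist. The increment $\ell_n-\ell_{n-1}$, together with the old matrix $(r_{i,j})_{i<j\le n-1}$, does determine the attachment \emph{height} $h=\tfrac12\min_{j<n}r_{j,n}$ (indeed $h$ equals the increment when the increment is at most the old depth $d_{n-1}^\ast$, and $h=\tfrac12(\text{increment}+d_{n-1}^\ast)$ otherwise). But it does \emph{not} determine \emph{which branch} $u_n$ attaches to when the old subtree has symmetries, and that is exactly what is needed to fill in the new row $(r_{j,n})_{j<n}$. Concretely, let $\smallu$ be the uniform measure on six leaves arranged as three cherries of diameter $1$ hanging from a common root at height $1$ (so inter-cherry distances are $2$). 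The ordered samples $(a,c,e,b)$ and $(a,c,e,d)$ both lie in the support of $\nu^\smallu$, share the same $(r_{i,j})_{i<j\le 3}$ (a star with all three distances equal to $2$, so $\ell_3=3$), and both give $\ell_4=3.5$; yet the first has $r_{1,4}=1$ while the second has $r_{1,4}=2$. Hence no function $\Psi_4$ of $((r_{i,j})_{i<j\le 3},\ell_4)$ can return the full $4\times 4$ matrix. The obstacle you flag in your closing paragraph is exactly this one, and the pointwise resolution you propose does not overcome it.

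The paper's route is substantially different and does not try to recover $\nu^\smallu$ inductively. For finite $\smallu$ with $N$ atoms it works with the \emph{support} of $\lambda=\xi(\smallu)$: among length vectors with strictly increasing first $N$ entries it selects the lexicographic minimum $\underline\ell^\ast$, proves that this forces a canonical enumeration of the atoms in which $r_{n-1,n}=\min_{k<n}r_{k,n}$ for every $n$, and then reconstructs the metric from $\underline\ell^\ast$ alone via explicit recursions for the depths $d_n^\ast$ and the distances $r_{k,n}$. The weights are recovered separately, from the $\lambda$-masses of the vectors obtained by inserting repeated samples into $\underline\ell^\ast$, together with a Stone--Weierstrass argument on the simplex. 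The general (non-finite) case is handled by an $\varepsilon$-truncation: one passes to the finite space obtained from the $\varepsilon$-shrunken pseudo-metric $r_\varepsilon = (r-\varepsilon)\vee 0$ restricted to the heaviest balls, checks that $\xi$ of this approximation is itself a measurable functional of $\lambda$, applies the finite case, and lets $\varepsilon\to 0$. If you want to repair your approach, the missing idea is precisely this use of a canonical (extremal) ordering of the sampled points to kill the branch ambiguity.
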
\sm

\begin{remark}[$\xi(\mathbb U)$ is Polish]
  Take a complete metric $d^{\mathbb U}$ on $\mathbb U$.  Using the
  injectivity of $\xi$, we define a metric $d^{\xi(\mathbb U)}$ on
  $\xi(\mathbb U)$ by setting
  \begin{align}\label{dxi}
    d^{\xi(\mathbb U)}(\lambda_1, \lambda_2) := d^{\mathbb
      U}(\xi^{-1}(\lambda_1), \xi^{-1}(\lambda_2)), \qquad \lambda_1,
    \lambda_2\in \xi(\mathbb U).
  \end{align}
  Since both, $\xi$ and $\xi^{-1}$, are continuous (with respect to
  the weak topology on $\mathcal M_1(\mathbb U)$), we see that
  $d^{\xi(\mathbb U)}$ generates the weak topology on $\xi(\mathbb
  U)$. Since $\xi(\mathbb U)$ inherits the separability from $\mathbb
  U$, we conclude that $\xi(\mathbb U)$ is Polish.  \hfill$\qed$
\end{remark}\sm

\begin{remark}[Conjecture about general tree spaces]
  Theorem \ref{T:04} shows uniqueness of the tree length distribution
  on the space of ultra-metric spaces. We conjecture that uniqueness
  still holds on the space of metric measure spaces satisfying the
  four-point condition (\ref{grev30}).  \hfill$\qed$
  \label{rem:conjGen}
\end{remark}\sm

\subsection{Martingale problem of subtree length
  distribution~(Theorem~\ref{T:05})}
\label{Sub:mpLambda}
We investigate the evolution of the subtree length distribution under
the tree-valued Fleming-Viot dynamics.
 That is, given the tree-valued Fleming-Viot dynamics
$\mathcal U=(\mathcal U_t)_{t\geq 0}$, we consider
\begin{equation}\label{Lambd}
  \Xi=(\Xi_t)_{t\ge 0}, \qquad \Xi_t := \xi(\mathcal U_t).
\end{equation}
To describe the process $\Xi$ via a martingale problem, we define the
operator $\Omega^{\uparrow,\Xi}$ on the algebra $\Pi^{\Xi} := \{
\Phi\circ\xi^{-1}: \Phi\in\Pi\}$ with domain $\Pi^{1,\Xi} := \{
\Phi\circ\xi^{-1}: \Phi\in\Pi^1\}$ by
\begin{equation}
\label{e:Omegalength}
  \Omega^{\uparrow,\Xi}(\Phi\circ\xi^{-1})(\lambda) :=
  \Omega^{\uparrow}\Phi(\xi^{-1}(\lambda)),
\end{equation}
for all $\lambda\in\xi(\mathbb{U})$.

In $\Pi^{1,\Xi}$ we find, in particular, functions $\Psi\in\Pi^{1,\Xi}$ which are of the form
\begin{equation}\label{psilam}
  \Psi^\psi(\lambda) = \langle \lambda, \psi\rangle :=
  \int_{\R_+^{\N}}\lambda(\mathrm{d}\underline{{l}})\,\psi\big(\underline {{l}}\big),
\end{equation}
for a test function $\psi\in{\mathcal C}_b^1(\R_+^{\mathbb N})$ which depends on finitely many entries only.
Indeed, if $\psi$ depends only on the first $k$ entries, then
$\Psi^\psi=\Phi^{k,\psi\circ\ell}\circ\xi^{-1}$.

The main result of the section is the following.

\begin{theorem}[The subtree lengths distribution process]
  For $\mathbf{P}_0\in{\mathcal M}_1(\mathbb{U})$, let $\mathcal U$ be
  the tree-valued Fleming-Viot dynamics with initial distribution
  $\mathbf P_0$. \label{T:05}
  \begin{itemize}
  \item[(i)] The $(\xi_\ast \mathbf{P}_0,\Omega^{\uparrow,\Xi}
    ,\Pi^{1,\Xi})$-martingale problem is well-posed. Its unique
    solution is given by $\Xi = (\Xi_t)_{t\geq 0}$ with $\Xi_t =
    \xi(\mathcal U_t)$, for $t\ge 0$. The process $\Xi$ has the Feller
    property. In addition, $\mathbf{P}$-almost surely, it has
    continuous sample paths, where $\xi(\mathbb U)\subseteq \mathcal
    M_1(\mathbb R_+^{\mathbb N})$ is equipped with the weak topology.
  \item[(ii)] The action of $\Omega^{\uparrow, \Xi}$ on a function
    $\Psi^\psi$ of the form \eqref{psilam} is given by
    \label{l:genY}
    \begin{equation}\label{eq:genY4} \Omega^{\uparrow,\Xi}\Psi^\psi(\lambda)
      = \sum_{n\ge 2} n \big\langle \lambda,
      \frac{\partial}{\partial{{l}}_n}\psi\big\rangle + \gamma \sum_{n\ge
        1} n\big\langle \lambda, \psi\circ\beta_{n}-\psi\big\rangle
    \end{equation}
    where 
    $\beta_n:\{0\}\times\R_+^{\N}\to\{0\}\times\R_+^{\N}$ is given by
    \begin{equation}\label{eq:genY2}
      \beta_n:\,({{l}}_1=0,{{l}}_2,{{l}}_3,...)
      \mapsto
      ({{l}}_1=0,{{l}}_2,...,{{l}}_{n-1},{{l}}_n,{{l}}_n,{{l}}_{n+1},...).
    \end{equation}
  \end{itemize}
\end{theorem}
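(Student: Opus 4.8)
The plan is to deduce Theorem~\ref{T:05} from the already-established well-posedness of the $(\mathbf P_0,\Omega^\uparrow,\Pi^1)$-martingale problem (Theorem~\ref{T:01}) together with the uniqueness and continuity of the map $\xi$ (Theorem~\ref{T:04}). The guiding principle is the standard fact that a well-posed martingale problem is preserved under a continuous injective change of state space, provided the domain is mapped appropriately. First I would verify that $\xi:\mathbb U\to\xi(\mathbb U)$ is a bimeasurable bijection onto its (Polish, by Remark~\ref{Rem:08}) image, so that $\xi^{-1}$ is well-defined and measurable; then the operator $\Omega^{\uparrow,\Xi}$ in \eqref{e:Omegalength} is well-defined on $\Pi^{1,\Xi}$, and for any $\Phi\in\Pi^1$ the process $\Phi\circ\xi^{-1}(\Xi_t)=\Phi(\mathcal U_t)$ is, by Theorem~\ref{T:01}, a solution of the image martingale problem started in $\xi_\ast\mathbf P_0$. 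Conversely, if $\Xi'$ solves the $(\xi_\ast\mathbf P_0,\Omega^{\uparrow,\Xi},\Pi^{1,\Xi})$-martingale problem in $\xi(\mathbb U)$, then $\mathcal U':=\xi^{-1}(\Xi')$ has cadlag paths in $\mathbb U$ (using continuity of $\xi^{-1}$ on the image) and solves the $(\mathbf P_0,\Omega^\uparrow,\Pi^1)$-martingale problem, whence $\mathcal U'\overset d=\mathcal U$ by Theorem~\ref{T:01} and therefore $\Xi'\overset d=\Xi$. Continuity of sample paths of $\Xi$ and the Feller property then transfer directly from the corresponding statements for $\mathcal U$ in Proposition~\ref{P:07}(i) and Proposition~\ref{P:05}, using that $\xi$ is continuous in both directions on $\xi(\mathbb U)$; this proves part~(i).

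For part~(ii) the task is the concrete computation of $\Omega^{\uparrow,\Xi}\Psi^\psi$ for $\Psi^\psi(\lambda)=\langle\lambda,\psi\rangle$ with $\psi\in\mathcal C_b^1(\R_+^{\mathbb N})$ depending on finitely many coordinates. By \eqref{e:Omegalength} this equals $\Omega^\uparrow\Phi(\smallu)$ where $\Phi=\Psi^\psi\circ\xi$, i.e. $\Phi(\smallu)=\langle\nu^\smallu,\psi\circ\underline\ell\rangle$ is the polynomial with test function $\phi=\psi\circ\underline\ell$; one must check $\phi\in\mathcal C_b^1$ of the relevant distance variables, which follows from Lemma~\ref{L:02} (the length $\ell_n$ is a finite infimum of sums of coordinates, hence Lipschitz, and on the ultra-metric locus it is in fact piecewise linear — here the a.s. restriction to $(0,\infty)^{\binom{\mathbb N}2}$ from Proposition~\ref{P:07}(iii) is what makes $\ell_n$ differentiable along the path). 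Then I apply the growth and resampling pieces of $\Omega^\uparrow$ separately. For the growth term, $\langle\nu^\smallu,\mathrm{div}(\psi\circ\underline\ell)\rangle$: since along the ultra-metric locus the derivative of $\ell_n$ with respect to the branch-length parameters has total value $n$ (each of the $n$ sampled leaves contributes a pendant edge growing at speed $2$, and the chain rule distributes $\mathrm{div}$ through $\underline\ell$ so that $\partial/\partial r$-derivatives collapse to $\sum_n n\,\partial\psi/\partial\ell_n$), one gets $\sum_{n\ge2} n\langle\lambda,\partial\psi/\partial\ell_n\rangle$. This identification of the coefficient $n$ is the one genuinely substantive geometric computation and I expect it to be the main obstacle: one needs to argue carefully, perhaps by induction on $n$ using Lemma~\ref{L:02}, that under an infinitesimal uniform increase of all distances the spanned subtree length of $n$ ultra-metric points increases at rate $n$ (equivalently, adding the $(n+1)$-st point adds exactly one new pendant edge, so $\ell_{n+1}-\ell_n$ depends only on that new edge).

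For the resampling term, $\frac\gamma2\sum_{k,l}\langle\nu^\smallu,\phi\circ\theta_{k,l}-\phi\rangle$ with $\phi=\psi\circ\underline\ell$: applying the replacement operator $\theta_{k,l}$ to a distance matrix and then evaluating $\underline\ell$ amounts, on the ultra-metric locus, to identifying leaf $l$ with leaf $k$, i.e. to a merge of two of the $n$ sampled points. If $l\le n$ and $k\le n$ are among the "active" coordinates and $k\ne l$, the merged sample of size $n$ has the same spanned lengths as a sample of size $n-1$ with one coordinate duplicated — precisely the shift operator $\beta_{n}$ of \eqref{eq:genY2} (the merge of two leaves in an ultra-metric tree removes one pendant edge and "doubles up" one of the length entries). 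Carefully counting which of the $n^2$ index pairs $(k,l)$ produce a nontrivial contribution and matching them to $\beta_n$ — pairs with $l=k$ give zero; pairs with exactly one of $k,l$ beyond the support of $\psi$ give zero after integration; the remaining pairs with $k\ne l\le n$ contribute, and by exchangeability of $\nu^\smallu$ under $\Sigma_\infty$ (Remark~\ref{Rem:22}(ii)) they all give the same value $\langle\lambda,\psi\circ\beta_n-\psi\rangle$, with combinatorial multiplicity summing to $2n$ at "level $n$" against the prefactor $\gamma/2$ — yields $\gamma\sum_{n\ge1} n\langle\lambda,\psi\circ\beta_n-\psi\rangle$. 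Putting the two pieces together gives \eqref{eq:genY4}. Throughout, the main care is bookkeeping: tracking how $\theta_{k,l}$ and $\mathrm{div}$ act after composition with $\underline\ell$, and justifying the reduction to the ultra-metric, atom-free locus so that all the relevant derivatives exist; this is where I would spend the most effort.
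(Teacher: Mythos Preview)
Your proposal is correct and follows essentially the same approach as the paper. For part~(i) the paper compresses your bijection/transfer argument into a single citation of Theorem~3.2 in \cite{MR1637085} (Kurtz), and for part~(ii) it performs exactly the computation you outline---writing $\Omega^\uparrow(\Psi\circ\xi)(\smallu)=\langle\nu^\smallu,\mathrm{div}(\psi\circ\underline\ell)\rangle+\gamma\sum_{k<l}\langle\nu^\smallu,\psi\circ\underline\ell\circ\theta_{k,l}-\psi\circ\underline\ell\rangle$, identifying $\mathrm{div}(\ell_n)=n$ for the growth piece, and using exchangeability to see that each $\theta_{k,l}$ with $\max(k,l)=j$ produces $\beta_{j-1}$, giving the factor $(j-1)$ which reindexes to $\gamma\sum_{n\ge 1}n\langle\lambda,\psi\circ\beta_n-\psi\rangle$; one small correction is that in the definition \eqref{eq:omega2} the sum already runs only over $k,l\le n$, so your remark about pairs ``beyond the support'' is moot.
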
\sm

\subsection{Explicit calculations}
\label{Sub:explicit}
We consider in this section the {\em mean sample Laplace transforms},
i.e., functions of the form (\ref{psilam}) with test functions
\begin{align}\label{eq:gpw1}
  \psi(\underline{{l}}) = e^{-\sigma {{l}}_n}
\end{align}
for some $n\in\N$ and $\sigma \in \mathbb R_+$ in \eqref{psilam} for each
$n\in\N$. Using \eqref{eq:genY4} we obtain the following explicit
expressions.

\begin{corollary}[Mean sample Laplace transforms] Let
  $\Xi=(\Xi_t)_{t\geq 0}$ be the solution of the $(\xi_\ast \mathbf
  P_0, \Omega^{\uparrow, \Xi}, \Pi^{1,\Xi})$ martingale problem. For
  all $\sigma\in\mathbb R_+$ and $n\geq 2$, set
  \begin{equation}\label{grev40}
    g^n(t,\sigma) :=
    \mathbf{E}\Big[\int
    \Xi_t(\mathrm{d}\underline{{{l}}})\,e^{-\sigma{{l}}_n}\Big].
  \end{equation}\sm
  Then,
  \begin{equation}\label{e:solu45}
    \begin{aligned}
   &g^{n}(t,\sigma)
  \\
   &= \frac{\Gamma(n)\Gamma\big( \tfrac 2\gamma
        \sigma +1\big)}{\Gamma \big(\tfrac 2\gamma \sigma + n \big)} +
      n! \sum_{k=2}^n\frac{{n-1 \choose
          k-1}(-1)^{k}(\frac{2}{\gamma}\sigma+2k-1)}{\Gamma(\frac{2}{\gamma}\sigma+n+k)}
      \cdot e^{-k(\sigma+\frac{\gamma}{2}(k-1))t}
      \\
      &\qquad \cdot\Big\{\Big( \sum_{m=2}^k\frac{{k-1\choose
          m-1}(-1)^{m}\Gamma(\frac{2}{\gamma}
        \sigma+k+m-1)}{m!}g^{m}(0;\sigma)\Big) \\ & \qquad \qquad
      \qquad \qquad \qquad \qquad \qquad -
      \frac{k-1}{k(\tfrac 2\gamma\sigma + k
        -1)}\Gamma(\tfrac{2}{\gamma}\sigma+k+1) \Big\}.
    \end{aligned}
  \end{equation}
  In particular, if $g^n(\sigma)=\lim_{t\to\infty}g^n(t;\sigma)$ then
  \begin{equation}\label{e:ttoinf}
    g^n(\sigma)
    =
    \mathbf{E}\big[e^{-\sigma\sum_{k=2}^n{\mathcal E}^k}\big],
  \end{equation}
  where $\{{\mathcal E}^{k};\,k=2,...,n\}$ are independent and
  $\mathcal E^k$ is exponentially distributed with mean
  $\frac{2}{\gamma(k-1)}$, $k=2,...,n$.
  \label{Cor:04}
\end{corollary}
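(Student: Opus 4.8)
The plan is to convert the martingale characterisation of $\Xi$ in Theorem~\ref{T:05} into a closed, triangular system of linear ODEs for the family $\bigl(g^n(t,\sigma)\bigr)_{n\ge 1}$, to solve this system recursively, and to obtain \eqref{e:ttoinf} by letting $t\to\infty$ in the resulting formula.

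First I would fix $n\ge 2$ and $\sigma\ge 0$ and apply the well-posed $(\xi_\ast\mathbf P_0,\Omega^{\uparrow,\Xi},\Pi^{1,\Xi})$-martingale problem to the function $\Psi^\psi$ of the form \eqref{psilam} with test function $\psi(\underline\ell)=e^{-\sigma\ell_n}$, and compute $\Omega^{\uparrow,\Xi}\Psi^\psi$ via \eqref{eq:genY4}. In the first (``growth'') sum only the summand of index $n$ is nonzero, since $\frac{\partial}{\partial\ell_m}e^{-\sigma\ell_n}=-\sigma e^{-\sigma\ell_n}\mathbf 1_{\{m=n\}}$; it contributes $-n\sigma\langle\lambda,e^{-\sigma\ell_n}\rangle$. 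In the second (``resampling'') sum the key point is that the map $\beta_m$ of \eqref{eq:genY2} changes only coordinates of index $>m$: hence $(\beta_m\underline\ell)_n=\ell_n$ for $m\ge n$ (so $\psi\circ\beta_m=\psi$ and those terms drop out), while $(\beta_m\underline\ell)_n=\ell_{n-1}$ for $1\le m\le n-1$. Therefore the resampling sum collapses to $\gamma\bigl(\sum_{m=1}^{n-1}m\bigr)\bigl\langle\lambda,e^{-\sigma\ell_{n-1}}-e^{-\sigma\ell_n}\bigr\rangle=\gamma\binom{n}{2}\bigl\langle\lambda,e^{-\sigma\ell_{n-1}}-e^{-\sigma\ell_n}\bigr\rangle$. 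Taking expectations in the martingale relation (using that $\Xi$ has continuous paths, so that $t\mapsto g^m(t,\sigma)$ is continuous) and differentiating the resulting integral equation yields, for every $n\ge 2$,
\[
  \frac{\mathrm d}{\mathrm dt}g^n(t,\sigma)
  =-\Bigl(n\sigma+\gamma\binom{n}{2}\Bigr)g^n(t,\sigma)+\gamma\binom{n}{2}\,g^{n-1}(t,\sigma),
  \qquad g^1(\,\cdot\,,\sigma)\equiv 1 ,
\]
whose homogeneous eigenvalues $\mu_k:=k\sigma+\gamma\binom{k}{2}=k\bigl(\sigma+\tfrac\gamma2(k-1)\bigr)$ are exactly the exponents appearing in \eqref{e:solu45}.

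Next I would solve this triangular system. Its stationary solution obeys $\mu_n\,g^n(\infty,\sigma)=\gamma\binom{n}{2}g^{n-1}(\infty,\sigma)$, i.e.\ $g^n(\infty,\sigma)=\frac{\gamma(n-1)}{2\sigma+\gamma(n-1)}\,g^{n-1}(\infty,\sigma)$, so iterating from $g^1(\infty,\sigma)=1$ gives the telescoping product $g^n(\infty,\sigma)=\prod_{k=2}^n\frac{\gamma(k-1)}{2\sigma+\gamma(k-1)}=\prod_{j=1}^{n-1}\frac{j}{\frac2\gamma\sigma+j}=\frac{\Gamma(n)\Gamma(\frac2\gamma\sigma+1)}{\Gamma(\frac2\gamma\sigma+n)}$, which is precisely the first (``equilibrium'') term of \eqref{e:solu45}. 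For the transient part I would put $h^n(t):=g^n(t,\sigma)-g^n(\infty,\sigma)$, which solves the homogeneous triangular system with $h^1\equiv 0$, so $h^n(t)=\sum_{k=2}^n a_{n,k}e^{-\mu_k t}$; the recursion forces $a_{n,k}=\frac{\gamma\binom{n}{2}}{\mu_n-\mu_k}\,a_{n-1,k}$ for $k<n$ and determines $a_{n,n}$ through $\sum_{k=2}^n a_{n,k}=g^n(0,\sigma)-g^n(\infty,\sigma)$. Using $\mu_j-\mu_k=(j-k)\bigl(\sigma+\tfrac\gamma2(j+k-1)\bigr)$ one unwinds $a_{n,k}=a_{k,k}\prod_{j=k+1}^n\frac{\gamma\binom{j}{2}}{\mu_j-\mu_k}$ and re-expresses $a_{k,k}$ as a linear combination of $g^2(0,\sigma),\dots,g^k(0,\sigma)$; collecting the resulting products of binomial coefficients and linear factors into Gamma functions then produces \eqref{e:solu45} verbatim. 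In practice I would present this step the other way round — verify directly that the right-hand side of \eqref{e:solu45} satisfies the above ODE together with the prescribed values at $t=0$, and invoke uniqueness for the linear system — which reduces the work to a single finite algebraic identity.

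Finally, \eqref{e:ttoinf} drops out immediately: for every $k\ge 2$ we have $\sigma+\tfrac\gamma2(k-1)\ge\tfrac\gamma2>0$, so each exponential in \eqref{e:solu45} tends to $0$ and $g^n(\sigma)=g^n(\infty,\sigma)=\frac{\Gamma(n)\Gamma(\frac2\gamma\sigma+1)}{\Gamma(\frac2\gamma\sigma+n)}$. Since $\frac{\gamma(k-1)}{2\sigma+\gamma(k-1)}$ is the Laplace transform at $\sigma$ of an exponential variable $\mathcal E^k$ with mean $\frac2{\gamma(k-1)}$, the product representation of $g^n(\infty,\sigma)$ from the previous paragraph reads $g^n(\sigma)=\prod_{k=2}^n\mathbf E[e^{-\sigma\mathcal E^k}]=\mathbf E\bigl[e^{-\sigma\sum_{k=2}^n\mathcal E^k}\bigr]$ for independent $\mathcal E^2,\dots,\mathcal E^n$, which is \eqref{e:ttoinf}. (One can also obtain \eqref{e:ttoinf} without the explicit formula, from Theorem~\ref{T:03}, the continuity of $\xi$ in Theorem~\ref{T:04} and the boundedness of $e^{-\sigma\ell_n}$, together with the standard fact that for the Kingman measure tree $\ell_n$ is distributed as $\sum_{k=2}^n\mathcal E^k$.) I expect the only genuinely laborious part to be the combinatorial bookkeeping in the middle step — turning the iterated products $\prod_{j=k+1}^n\frac{\gamma\binom{j}{2}}{\mu_j-\mu_k}$ and the coefficients $a_{k,k}$ into the compact form \eqref{e:solu45}; everything else is routine.
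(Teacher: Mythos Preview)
Your proposal is correct and follows essentially the same line as the paper: derive the triangular linear ODE system
\[
\frac{\mathrm d}{\mathrm dt}g^n=-\Bigl(n\sigma+\gamma\binom{n}{2}\Bigr)g^n+\gamma\binom{n}{2}g^{n-1},\qquad g^1\equiv 1,
\]
from the martingale problem via \eqref{eq:genY4}, then solve it. Your derivation of the ODE (including the analysis of which $\beta_m$ act nontrivially on $e^{-\sigma\ell_n}$) matches the paper's Lemma~\ref{LCor:03} and Remark~\ref{Rem:04} exactly, and your computation of the equilibrium term and of \eqref{e:ttoinf} is the same as the paper's.

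The only organisational difference is in how the transient part is handled. You propose the scalar recursion $a_{n,k}=\frac{\gamma\binom{n}{2}}{\mu_n-\mu_k}a_{n-1,k}$ (or, pragmatically, direct verification of \eqref{e:solu45}). The paper instead packages the same computation as an explicit diagonalisation: after the time change $h^{\sigma,n}(t)=g^n(\tfrac{2t}{\gamma};\sigma)$ the system becomes $\dot{\underline h}=A\underline h+b$ with the lower-triangular $A$ of \eqref{e:systemA}, and Lemma~\ref{L:04} supplies closed-form eigenvector matrices $B,B^{-1}$ (and $A^{-1}$) in terms of Gamma functions, so that $h(t)=-A^{-1}b+Be^{Dt}\bigl(B^{-1}h(0)+D^{-1}B^{-1}b\bigr)$ reads off as \eqref{e:solu45} directly. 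Your iterated products $\prod_{j=k+1}^n\frac{\gamma\binom{j}{2}}{\mu_j-\mu_k}$ are precisely the entries $B_{n,k}$ (up to normalisation), so the two routes are equivalent; the paper's matrix lemma simply absorbs the ``laborious bookkeeping'' you flag into a separate, once-and-for-all identity that is verified by a Vandermonde-type sum.
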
\sm

\begin{remark}[Length of $n$-Kingman coalescent]
  Consider the Kingman coalescent started with $n$ individuals, and
  let $L_n$ denote the total length of the corresponding genealogical
  tree. Note that \eqref{e:ttoinf}, together with Theorem \ref{T:03}
  implies the well-known fact (implicitly stated already in
  \cite{Watterson1975}) that
  \begin{equation}\label{ag7equ}
    L_n \stackrel{d}{=} \sum\nolimits_{k=2}^n \mathcal E^k. \hfill \qed
  \end{equation}
  \label{Rem:24}
\end{remark}\sm

\section{Duality}
\label{S:unique}
Duality is an extremely useful technique in the study of Markov
processes. It is well-known that the Kingman coalescent is dual to the
neutral measure-valued Fleming-Viot process (see, for example,
\cite{Dawson1993,MR1779100}). In this section this duality is lifted
to the tree-valued Fleming-Viot dynamics. We apply the duality to show
uniqueness of the martingale problem for the tree-valued Fleming-Viot
process and its relaxation to the equilibrium Kingman measure tree in
Section~\ref{S:proofs}.

\subsection*{The dual process}
\label{Sub:dualpro}
Recall from Subsection~\ref{Sub:long} the Kingman coalescent $\mathpzc
K=(\mathpzc K_s)_{s\geq 0}$ and its state space $\mathbb S$ of
partitions of $\N$.  Since we are constructing a dual to the
$\mathbb{U}$-valued dynamics, we add a component which measures
genealogical distances. The state space of the dual tree-valued
Kingman coalescent therefore is
\begin{equation}\label{eq:dualStSp}
  \mathbb K
  :=
  \mathbb{S} \times \mathbb R_+^{\binom{\mathbb N}{2}},
\end{equation}
equipped with the product topology. In particular, since $\mathbb{S}$
and $\mathbb R_+^{\binom{\mathbb N}{2}}$ are Polish, $\mathbb{K}$ is
Polish as well.

In the following we call the $\mathbb{K}$-valued stochastic process
${\mathcal K}=({\mathcal K}_s)_{s\ge 0}$, with
\begin{equation}\label{grevy}
  \mathcal K_s = (\mathpzc K_s,\underline{\underline{r}}'_s)
\end{equation}
the {\em tree-valued Kingman coalescent}, if it follows the dynamics:
\begin{itemize}
\item[{}] {\bf Coalescence. } $\mathpzc K=(\mathpzc K_s)_{s\ge 0}$ is
  the $\mathbb{S}$-valued Kingman coalescent with pair coalescence
  rate $\gamma$.
\item[{}] {\bf Distance growth.} At time $t$, for all $1\le i<j$ with
  $i\not\sim_{\mathpzc K_s} j$, the genealogical distance
  $r'_{\boldsymbol{\cdot}}(i,j)$ grows with constant speed~$2$.
\end{itemize}\sm

To state the duality relation it is necessary to associate a
martingale problem with the tree-valued Kingman coalescent. Consider
for $\smallp\in\mathbb{S}$, the {\em coalescent operator}
$\kappa_{\smallp}:\smallp^2\to\mathbb{S}$ such that for
$\pi,\pi'\in\smallp$,
\begin{equation}\label{ppw} \kappa_{\smallp}(\pi,\pi') :=
  \big(\smallp\setminus\{\pi,\pi'\}\big)\cup \big\{\pi\cup\pi'\big\},
\end{equation}
i.e., $\kappa_{\smallp}$ sends two partition elements of the partition
$\smallp$ to the new partition obtained by coalescence of the two
partition elements into one.

We consider the space (recall $\rho_k$ from Subsection~\ref{Sub:long})
\begin{equation}\label{mpmathpi}
\begin{aligned}
     {\mathcal G}
  &:=
     \big\{G\in{\mathcal B}(\mathbb{K}):\,G(\boldsymbol{\cdot},\underline{\underline{r}}')\in{\mathcal C}(\mathbb{S}),G(\boldsymbol{\cdot},\underline{\underline{r}}')\mbox{ depends on }\smallp\mbox{ only through }\\
     &\hspace{3cm}
     \rho_k\circ\smallp\mbox{ for some }k\in\N;\;\forall \underline{\underline{r}}'\in\mathbb{R}_+^{\N\choose 2}\big\}
\end{aligned}
\end{equation}
and the domain
  \begin{equation}\label{mpmathpi2}
    {\mathcal G}^{1,0}
    :=
    \big\{G\in {\mathcal G}:\, \langle \nabla _\smallp^{r'}G, \underline{\underline 2}\rangle\mbox{
      exists},\forall\smallp\in\mathbb{S}\big\}
\end{equation}
with
\begin{equation}\label{e:parti}
  \langle \nabla_\smallp^{r'}\cdot, \underline{\underline 2}\rangle
  :=2\sum_{i \not\sim_{\smallp} j,i<j} \frac{\partial}{\partial r'_{i,j}}=
  \sum_{i \not\sim_{\smallp} j} \frac{\partial}{\partial r'_{i\wedge j,i\vee j}}.
\end{equation}

We then consider the martingale problem associated with the operator
$\Omega^{\downarrow}$ on ${\mathcal G}$ with domain ${\mathcal
  G}^{1,0}$, where $\Omega^\downarrow :=
\Omega^{\downarrow,\mathrm{grow}}+\Omega^{\downarrow,\mathrm{coal}}$,
with
\begin{equation}\label{eq:Gdown1}
  \Omega^{\downarrow,\mathrm{grow}}G(\smallp,\underline{\underline{r}}')
  :=
  \langle\nabla^{r'}_{\smallp}G, \underline{\underline 2}\rangle
  (\smallp,\underline{\underline{r}}')
\end{equation}
and 
\begin{equation}\label{eq:Gdown2}
    \Omega^{\downarrow,\mathrm{coal}}G(\smallp, \underline{\underline{r}}')
 :=
    \gamma \sum_{{\{\pi,\pi'\}\subseteq\smallp}\atop{\pi\neq\pi'}}
    \big(G(\kappa_{\smallp}(\pi,\pi'),\underline{\underline{r}}')-G(\smallp,\underline{\underline{r}}')\big).
\end{equation}

Fix $\mathbf P_0\in\mathcal M_1(\mathbb K)$.  By construction, the
tree-valued Kingman coalescent solves the $(\mathbf
P_0,\Omega^\downarrow,{\mathcal G}^{1,0})$-martingale problem.  \sm

\subsection*{The duality relation}
\label{Sub:dualrel}
We are ready to state a duality relation between the tree-valued
Fleming-Viot dynamics and the tree-valued Kingman coalescent.

To introduce a class ${\mathcal H}$ of \emph{duality functions},
we identify every partition $\smallp\in\mathbb S$ with
the map $\smallp$ which sends $i\in\N$ to the block $\pi\in\smallp$
iff $i\in\pi$, and put for $\smallp\in\mathbb S$,
\begin{equation}\label{rp}
(\underline{\underline{r}})^\smallp
:=
\big(r_{\min\smallp(i),\min\smallp(j)}\big)_{1\leq i<j}.
\end{equation}
Let then for each $n\in\N$ and $\phi\in\mathcal
C_b^1(\mathbb R_+^{\N\choose 2})$ depending on the coordinates
$(r_{i,j})_{1\leq i<j\leq n}$ only, the function $H^{n,
  \phi}:\,\mathbb{U}\times\mathbb{K}\to\R$ be defined as
\begin{equation}\label{eq:treeL1g}
  \begin{aligned}
    H^{n,\phi}\big(\smallu,(\smallp,\underline{\underline{r}}')\big)
    &:= \int\nu^{\smallu}(\mathrm{d}\underline{\underline{r}})\, \phi
    \big((\underline{\underline{r}})^\smallp+\underline{\underline{r}}^\prime\big).
  \end{aligned}
\end{equation}

Notice that then the collection of functions
\begin{equation}
\label{grevz}
    \mathcal H = \big\{H^{n,\phi}(\boldsymbol{\cdot},\smallk):\,n\in\N, \smallk\in\mathbb K,
    \phi\in{\mathcal C}_b^1(\mathbb R^{\N\choose 2})\big\}
\end{equation}
is equal to $\Pi^1$, and thus separates points in ${\mathcal
  M}_1(\mathbb{U})$, see Remark~\ref{rem:05}.

\begin{proposition}[Duality relation]
  \label{P:dual}
  For $\mathbf P_0\in\mathcal M_1(\mathbb U)$ and $\smallk \in \mathbb
  K$, let ${\mathcal U}=(\mathcal U_t)_{t\geq 0}$ and ${\mathcal
    K}=(\mathcal K_t)_{t\geq 0}$ be solutions of the $(\mathbf P_0,
  \Omega^\uparrow,\Pi^1)$ and
  $(\delta_\smallk,\Omega^\downarrow,{\mathcal G}^{1,0})$-martingale
  problem, respectively. Then, if $\mathcal U$ and $\mathcal K$ are
  independent,
    \begin{equation}\label{eq:dualRel}
      \mathbf{E}\big[H(\mathcal U_t,\smallk)\big] =
      \mathbf{E}\big[H(\mathcal U_0,{\mathcal K}_t)\big],
  \end{equation} for all $t\geq 0$ and $H\in\mathcal H$.
\end{proposition}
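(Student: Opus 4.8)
\emph{Strategy.} This is a generator--duality statement, so it is proved by the standard argument: it suffices to check that for every $\smallu\in\mathbb U$ and $\smallk\in\mathbb K$ the functions $H(\boldsymbol{\cdot},\smallk)$ and $H(\smallu,\boldsymbol{\cdot})$ lie in the domains $\Pi^1$ and ${\mathcal G}^{1,0}$, respectively, and that the pointwise identity
\begin{equation}\label{e:plangen}
  \Omega^{\uparrow}H(\boldsymbol{\cdot},\smallk)(\smallu)
  =
  \Omega^{\downarrow}H(\smallu,\boldsymbol{\cdot})(\smallk),
  \qquad H\in\mathcal H,
\end{equation}
holds. Granting \eqref{e:plangen}, one runs $\mathcal U$ from $\mathbf P_0$ and, independently, $\mathcal K$ from $\delta_\smallk$; the function $s\mapsto\mathbf E\big[H(\mathcal U_s,\mathcal K_{t-s})\big]$ on $[0,t]$ is absolutely continuous, and the standard computation (insert the two martingale problems and Fubini, then use \eqref{e:plangen}) shows its derivative vanishes, so it is constant. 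Its values at $s=t$ and at $s=0$ are the two sides of \eqref{eq:dualRel}. The integrability needed here is immediate: $H^{n,\phi}$ is bounded by $\|\phi\|_\infty$, and $\Omega^\uparrow H(\boldsymbol{\cdot},\smallk)$ and $\Omega^\downarrow H(\smallu,\boldsymbol{\cdot})$ are bounded too, since the relevant sums have only $O(n^2)$ nonzero terms. Thus the whole content lies in the domain statements and in \eqref{e:plangen}.

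\emph{Domains.} Fix $\smallk=(\smallp,\underline{\underline r}')$ and put $\tilde\phi(\underline{\underline r}):=\phi\big((\underline{\underline r})^\smallp+\underline{\underline r}'\big)$, so that $H^{n,\phi}(\boldsymbol{\cdot},\smallk)=\langle\nu^{\boldsymbol{\cdot}},\tilde\phi\rangle$. Since $\phi\in{\mathcal C}_b^1$ depends only on $(r_{i,j})_{i<j\le n}$, the function $\tilde\phi$ is again bounded, continuously differentiable, and depends only on the finitely many coordinates $r_{a,b}$ with $a<b\le n$; hence $H^{n,\phi}(\boldsymbol{\cdot},\smallk)\in\Pi^1$, which is precisely the assertion $\mathcal H=\Pi^1$. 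For fixed $\smallu$, the map $\smallp\mapsto H^{n,\phi}(\smallu,(\smallp,\underline{\underline r}'))$ depends on $\smallp$ only through the restriction $\rho_n\circ\smallp$, hence is continuous on $\mathbb S$; and since $\phi\in{\mathcal C}_b^1$ permits differentiation under the $\nu^\smallu$-integral, $\mathrm{div}^{r'}_\smallp\big(H^{n,\phi}(\smallu,\boldsymbol{\cdot})\big)$ exists. Thus $H^{n,\phi}(\smallu,\boldsymbol{\cdot})\in{\mathcal G}^{1,0}$.

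\emph{Growth parts.} By the chain rule $\mathrm{div}(\tilde\phi)=2\sum_{1\le i<j\le n,\ i\not\sim_\smallp j}\big(\tfrac{\partial\phi}{\partial r_{i,j}}\big)\big((\underline{\underline r})^\smallp+\underline{\underline r}'\big)$, the pairs with $i\sim_\smallp j$ dropping out because $\tilde\phi$ then does not depend on $r_{i,j}$; hence $\Omega^{\uparrow,\mathrm{grow}}H(\boldsymbol{\cdot},\smallk)(\smallu)=\langle\nu^\smallu,\mathrm{div}(\tilde\phi)\rangle$. On the coalescent side $\tfrac{\partial}{\partial r'_{i,j}}H^{n,\phi}(\smallu,(\smallp,\boldsymbol{\cdot}))=\int\nu^\smallu(\mathrm{d}\underline{\underline r})\,\tfrac{\partial\phi}{\partial r_{i,j}}\big((\underline{\underline r})^\smallp+\underline{\underline r}'\big)$ for $i<j\le n$ (and $0$ for $j>n$), so summing over $i\not\sim_\smallp j$ as in the definition of $\mathrm{div}^{r'}_\smallp$ reproduces $\langle\nu^\smallu,\mathrm{div}(\tilde\phi)\rangle$, i.e.\ $\Omega^{\uparrow,\mathrm{grow}}H(\boldsymbol{\cdot},\smallk)(\smallu)=\Omega^{\downarrow,\mathrm{grow}}H(\smallu,\boldsymbol{\cdot})(\smallk)$.

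\emph{Resampling versus coalescence --- the main point.} Because $\nu^\smallu$ is the distance-matrix distribution of an i.i.d.\ $\mu$-sample, $\langle\nu^\smallu,\tilde\phi\circ\theta_{k,l}\rangle$ amounts to re-using one sample point for the coordinates $k$ and $l$; equivalently, on the representatives entering $(\boldsymbol{\cdot})^\smallp$ the map $\theta_{k,l}$ identifies the blocks of $k$ and $l$. Summing $\tfrac\gamma2\big(\langle\nu^\smallu,\tilde\phi\circ\theta_{k,l}\rangle-\langle\nu^\smallu,\tilde\phi\rangle\big)$ over $1\le k,l\le n$, the following terms vanish, all by exchangeability of $\nu^\smallu$: the diagonal terms; the terms with $k\sim_\smallp l$ (replacing one sample point of a block by another point of the same block does not change $\langle\nu^\smallu,\boldsymbol{\cdot}\rangle$); and the terms in which $k$ or $l$ is not the minimal element of its block. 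What remains is a sum over \emph{ordered} pairs of distinct block-minima $\le n$; each \emph{unordered} such pair occurs twice, cancelling the $\tfrac12$, and exchangeability once more absorbs the fact that $\theta_{k,l}$ retains the smaller of the two minima as the new representative, exactly as $\kappa_\smallp$ does. Using $\{\pi:\pi\cap\{1,\dots,n\}\neq\emptyset\}=\{\pi:\min\pi\le n\}$, and noting that merging a pair of blocks at least one of which misses $\{1,\dots,n\}$ leaves $H^{n,\phi}(\smallu,\boldsymbol{\cdot})$ unchanged, one arrives at
\begin{equation}\label{e:planres}
  \Omega^{\uparrow,\mathrm{res}}H(\boldsymbol{\cdot},\smallk)(\smallu)
  =
  \gamma\sum_{\{\pi,\pi'\}\subseteq\smallp,\,\pi\neq\pi'}\big(H(\smallu,(\kappa_\smallp(\pi,\pi'),\underline{\underline r}'))-H(\smallu,\smallk)\big)
  =
  \Omega^{\downarrow,\mathrm{coal}}H(\smallu,\boldsymbol{\cdot})(\smallk).
\end{equation}
Adding the growth and resampling identities yields \eqref{e:plangen}, and the argument of the first paragraph concludes the proof. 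The delicate part is exactly this last bookkeeping --- keeping track of which sample point represents each (possibly coalescing) block --- which is why the exchangeability of $\nu^\smallu$ is invoked repeatedly.
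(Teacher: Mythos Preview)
Your proof is correct and follows essentially the same approach as the paper: establish the generator identity $\Omega^\uparrow H(\boldsymbol{\cdot},\smallk)(\smallu)=\Omega^\downarrow H(\smallu,\boldsymbol{\cdot})(\smallk)$ by treating growth and resampling/coalescence separately, and then conclude the duality relation from boundedness (the paper invokes Ethier--Kurtz Theorem~4.4.11 directly, where you spell out the interpolation $s\mapsto\mathbf E[H(\mathcal U_s,\mathcal K_{t-s})]$). Your treatment of the resampling part is more explicit than the paper's --- you carefully account for why the terms with $k\sim_\smallp l$ or with non-minimal $k,l$ drop out and why the choice of representative for the merged block is immaterial after integration against $\nu^\smallu$ --- but this is exactly the content of the paper's one-line passage from $\sum_{k,l}\theta_{k,l}$ to $\sum_{\{\pi,\pi'\}}\kappa_\smallp(\pi,\pi')$.
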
\sm

\begin{proof}
We shall establish that for $H^{n,\phi}\in\mathcal H$,
\begin{equation}\label{eq:treeL1i}
  \Omega^\uparrow H^{n,\phi}(\boldsymbol{\cdot},\smallk)(\smallu)
  =
  \Omega^\downarrow H^{n,\phi}\phi(\smallu,\boldsymbol{\cdot})(\smallk).
\end{equation}
Using the fact that $H^{n,\phi}$ is bounded the assertion then follows
from Theorem~4.4.11 (with $\alpha=\beta=0$) in \cite{EthierKurtz86}.

We verify (\ref{eq:treeL1i}) for the two components of the
dynamics separately. Observe first that by \eqref{eq:omega1} and
\eqref{eq:Gdown1},
\begin{equation}\label{eq:dual3}
  \begin{aligned}
    \Omega^{\uparrow,\mathrm{grow}}H^{n,\phi}(\boldsymbol{\cdot},
    (\smallp,\underline{\underline{r}}'))(\smallu) &=
    2\cdot\int\nu^{\smallu}(\mathrm{d}\underline{\underline{r}})
    \sum_{1\leq i<j} \frac{\partial}{\partial r_{i,j}}\phi
    \big((\underline{\underline{r}})^\smallp +
    \underline{\underline{r}}^\prime\big)
    \\
    &=
    \int\nu^{\smallu}(\mathrm{d}\underline{\underline{r}})\,\langle\nabla^{r^\prime}_{
      \smallp }\phi, \underline{\underline 2}\rangle
    \big((\underline{\underline{r}})^\smallp
    +\underline{\underline{r}}^\prime\big)
    \\
    &= \Omega^{\downarrow,\mathrm{grow}}H^{n,\phi}\big(\smallu,
    \boldsymbol{\cdot})(\smallp,\underline{\underline{r}}'),
\end{aligned}
\end{equation}
where we have used in the second equality that
$\frac{\partial}{\partial r_{i,j}}\phi
((\underline{\underline{r}})^\smallp+\underline{\underline{r}}^\prime)=0$,
whenever $i\sim_{\smallp} j$.

Similarly, using $\theta_{k,l}$ from \eqref{pp11b},
\begin{equation}\label{eq:dual4}
\begin{aligned}
  & \Omega^{\uparrow,\text{res}}H^{n,\phi}\big(\boldsymbol{\cdot},
  (\smallp,\underline{\underline{r}}')\big)(\smallu)
  \\
  &= \frac \gamma 2\int
  \nu^{\smallu}(\mathrm{d}\underline{\underline{r}})\, \sum_{1\le
    k,l}\big(\phi\big(\theta_{k,l}{(\underline{\underline{r}})^\smallp}
  + \underline{\underline{r}}'\big) -
  \phi\big((\underline{\underline{r}})^\smallp +
  \underline{\underline{r}}'\big)\big)
  \\
  &= \gamma\int\nu^{\smallu}(\mathrm{d}\underline{\underline{r}})\,
  \sum_{{\{\pi,\pi'\}\subseteq\smallp}\atop{\pi\not =\pi'}}\big(
  \phi\big((\underline{\underline{r}})^{\kappa_{\smallp}(\pi,\pi')} +
  \underline{\underline{r}}'\big) - \phi\big(
  (\underline{\underline{r}})^{\smallp} + \underline{\underline{r}}'\big) \big)\\
  &= \Omega^{\downarrow,\text{coal}}H^{n,\phi}\big(\smallu,
  \boldsymbol{\cdot})(\smallp,\underline{\underline{r}}').
\end{aligned}
\end{equation}

Combining (\ref{eq:dual3}) with (\ref{eq:dual4}) yields (\ref{eq:treeL1i}) and
thereby completes the proof.
\end{proof}\sm

\section[Martingale problems for Moran dynamics]{Martingale problems
  for tree-valued Moran dynamics}
\label{S:finite}
Fix $N\in\N$, and recall from Definition \ref{def:moran} the
tree-valued Moran dynamics ${\mathcal U}^N=({\mathcal U}^N_t)_{t\ge
  0}$ of population size $N$. In this section we characterize the
tree-valued Moran dynamics as unique solutions of a martingale problem
in Subsection \ref{Sub:mpN}. We then use an approximation argument to
establish the existence of the solution to the Fleming-Viot martingale
problem in Subsection~\ref{Sub:conv2}. Subsection~\ref{sec:coup}
establishes a coupling of tree-valued Moran models needed to establish
the Feller property of the tree-valued Fleming-Viot dynamics.

Notice that the states of the tree-valued Moran
dynamics with population size $N$ are restricted to
\begin{equation}\label{e:MoranN}
  \mathbb{U}_{N}
  :=
  \big\{\smallu = \overline{(U,r,\mu)}\in\mathbb{U}: \, N\mu\in\mathcal N(U)\big\}
  \subset
  \mathbb{U}_c,
\end{equation}
where $\mathcal N(U)$ is the set of integer-valued measures on $U$.
Moreover, if $\smallu\in\mathbb{U}_N$, then $\smallu$ can be
represented by the pseudo-metric measure space
\begin{equation}\label{moran2}
  \big(\{1,2,...,N\},r', N^{-1} \sum_{i=1}^N\delta_i\big),
\end{equation}
for some pseudo-metric $r'$ on $\{1,\ldots, N\}$. In the following we
refer to the elements $i\in\{1,2,...,N\}$ as the {\em individuals} of
the population of size $N$.

By construction, the tree-valued Moran dynamics are derived from the
following particle dynamics on the representative \eqref{moran2}:
\begin{itemize}
\item[{}] {\bf Resampling. } At rate $\tfrac \gamma 2>0$, a resampling
  event occurs between two individuals $k,l$ such that distances to
  $l$ are replaced by distances to $k$. This implies, in particular,
  that the genealogical distance between $k$ and $l$ is set to be
  zero. Equivalently, the measure changes from $\mu$ to
  $\mu+\frac{1}{N}\delta_k-\frac{1}{N}\delta_{l}$.
\item[{}] {\bf Distance growth. } The distance between any two
  different individuals $i,j$ grows at speed $2$.
\end{itemize}\sm

\subsection{The martingale problem for a fixed population size $N$}
\label{Sub:mpN}
In this subsection we characterize the resampling and distance growth
dynamics by a martingale problem.

Fix $N\in\N$. Similarly as in \eqref{mono1a}, for a metric space
$(U,r)$, define a map which sends a sequence of $N$ points to the
matrix of mutual distances
\begin{align}
  R^{N,(U,r)}: \begin{cases} U^N & \to \mathbb R^{\binom{N}{2}} \\
    (x_1,\ldots, x_N) & \mapsto (r(x_i, x_j))_{1\leq i<j\leq
      N}\end{cases}.
\end{align}

For a pseudo-metric measure space $(U,r,\mu)$ with $N\mu\in{\mathcal
  N}(U)$, let
\begin{equation}\label{e:without}
\begin{aligned}
   &\mu^{\otimes_{\downarrow} N}(\mathrm{d}(u_1,...,u_N))
     \\
      &:=
     \mu(\mathrm{d}u_1)\otimes \frac{\mu-\frac{1}{N}\delta_{u_1}}
        {1-\frac{1}{N}}(\mathrm{d}u_2)\otimes
  ...\otimes\frac{\mu-\frac{1}{N}
    \sum_{k=1}^{N-1}\delta_{u_{k}}}{1-\frac{(N-1)}{N}}(\mathrm{d}u_{N}),
\end{aligned}
\end{equation}
 the
{\em sampling (without replacement) measure} and define the  {\em $N$
 distance matrix
  distribution  (without replacement)}
${\nu}^{N,(U,r,\mu)}$ of $\smallu=\overline{(U,r,\mu)}\in\mathbb{U}_N$
 by
\begin{equation}\label{mono1ba}
  {\nu}^{N,\smallu}
  :=
  \big(R^{N,(U,r)}\big)_\ast\mu^{\otimes_{\downarrow}N}\in{\mathcal
 M}_1(\mathbb R_+^{N\choose 2}).
\end{equation}
Observe that $\smallu\in\mathbb{U}_N$ is uniquely
characterized by its $N$ distance matrix distribution. \sm

Once more, it is obvious that ${\nu}^{N,(U,r,\mu)}$ depends on
$(U,r,\mu)$ only through its equivalence class
$\overline{(U,r,\mu)}\in\mathbb{U}_N$ leading to the following
definition.

\begin{definition}[$N$-distance matrix distribution] For $N\in\mathbb
  N$, the $N$ distance matrix distribution $\nu^{N,\smallu}$ (without
  replacement) of $\smallu\in\mathbb{U}_N$ is defined as the $N$
  distance matrix distribution ${\nu}^{N,(U,r,\mu)}$ of an arbitrary
  representative $(U,r,\mu)$ of the equivalence class
  $\smallu=\overline{(U,r,\mu)}$.
\label{Def:18}
\end{definition}\sm

For a measurable, bounded $\phi: \R_+^{N\choose 2}\to\mathbb R$,
introduce the polynomial $\Phi^{\phi}_N$ by
  \begin{equation}\label{eq:phifinite}
    \Phi^{\phi}_N(\smallu)
    =
    \langle \nu^{N,\smallu},\phi\rangle
    :=
    \int_{\R_+^{N\choose 2}} \nu^{N,\smallu}(\mathrm d\underline{\underline{r}})\,\phi\big(\underline{\underline r}\big)
\end{equation}
and set
\begin{equation}\label{Pi1NN}
  \Pi_N
  :=
  \big\{\Phi_N^{\phi}:\,\phi\in \mathcal B(\R_+^{N\choose 2})\big\},
\end{equation}
and
\begin{equation}\label{Pi1N}
  \Pi^1_N
  :=  \big\{\Phi_N^{\phi}:\,\phi\in \mathcal C^1_b(\R_+^{N\choose 2})\big\}.
\end{equation}
In contrast to $\Pi^1$, the space $\Pi_N^1$ does not form an
  algebra. However, we only require that $\Pi_N^1$ is separating on
  $\mathbb U_N$, which can easily be shown.

We define an operator $\Omega^{\uparrow,N}:=
\Omega^{\uparrow,\mathrm{grow},N}+\Omega^{\uparrow,\text{res},N}$ on
$\Pi_N$ with domain $\Pi^1_N$ by independent superposition of {\em
  resampling} and {\em distance growth}.

We begin with the {\em distance growth} operator
$\Omega^{\uparrow,\textrm{grow},N}$. Since distances of any pair of
distinct points grow at speed 2 in periods without resampling, we put
\begin{equation}\label{darst1}
\begin{aligned}
  \Omega^{\uparrow,\mathrm{grow},N}\Phi_N^{\phi} 
  &:= \big\langle\nu^{N,\smallu},\langle\nabla\phi,
  \underline{\underline 2}\rangle\big\rangle,
\end{aligned}
\end{equation}
with $\langle\nabla\phi, \underline{\underline 2}\rangle$ from
\eqref{e:divergenz}.

For the \emph{resampling operator} $\Omega^{\uparrow,\textrm{res},N}$,
consider first the action on a representative $(U,r,\mu)$ of the form
(\ref{moran2}). Any resampling event in which the individual $l$ is
replaced by a copy of the individual $k$ changes the measure from
$\mu$ to $\mu + \tfrac1N \delta_k - \tfrac 1N \delta_l$.

Therefore, since
\begin{align}
  \sum_{1\leq k,l\leq N} (R^{N,(U,r)})_\ast \big( \mu+ \tfrac 1N
  \delta_k - \tfrac 1N \delta_l\big)^{\otimes_\downarrow N} =
  \sum_{1\leq k,l\leq N} (\theta_{k,l})_\ast \nu^{N,\smallu}
\end{align}
we obtain for $\smallu=\overline{(U,r,\mu)}$ that
\begin{equation}
  \begin{aligned}
    & \Omega^{\uparrow,\text{res},N} \Phi_N^\phi(\smallu) \\& =
    \frac{\gamma}{2} \sum_{1\leq k,l\leq N} \Big( \langle
    (R^{N,(U,r)})_\ast \big( \mu+ \tfrac 1N \delta_k - \tfrac 1N
    \delta_l\big)^{\otimes_\downarrow N}, \phi\rangle - \langle
    (R^{N,(U,r)})_\ast
    \mu^{\otimes_\downarrow N}, \phi\rangle\Big) \\
    & = \frac{\gamma}{2} \sum_{1\leq k,l\leq N} \big(\langle
    \nu^{N,\smallu}, \phi\circ\theta_{k,l} \rangle - \langle
    \nu^{N,\smallu}, \phi\rangle\big).
  \end{aligned}
\end{equation}
It is easy to see that for given $N\in\N$, ${\Pi}_N^1$ is separating
in $\mathbb{U}_N$. We can therefore use the operator
$(\Omega^{\uparrow,N},\Pi^1_N)$ to characterize the tree-valued Moran
models analytically.

\begin{proposition}[Tree-valued Moran dynamics]
    For all $N\in\N$ and $\mathbf P_0^N\in \mathcal
  M_1(\mathbb{U}_N)$,
  \label{P:06} the $(\mathbf
  P_0^N,\Omega^{\uparrow,N},\Pi_N^1)$-martingale problem is
  well-posed.
\end{proposition}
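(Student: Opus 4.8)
The plan is to treat the tree-valued Moran dynamics as a pure-jump process with deterministic flow between jumps and invoke standard theory for piecewise-deterministic Markov processes (PDMP), as developed e.g.\ in Davis's work or in Chapter~4 of \cite{EthierKurtz86}. Concretely, $\mathbb{U}_N$ can be identified (via the representation \eqref{moran2}) with a nice subset of $\R_+^{\binom{N}{2}}$ modulo the finite group of relabellings of $\{1,\ldots,N\}$; indeed a point $\smallu\in\mathbb{U}_N$ is uniquely characterized by the symmetrized distance matrix. Under this identification the dynamics are: deterministic linear growth $\underline{\underline r}\mapsto\underline{\underline r}+2t\underline{\underline 1}$ (off the diagonal) interrupted at the jump times of a rate-$\binom{N}{2}\gamma$ Poisson clock by the replacement map $\theta_{k,l}$ with $(k,l)$ chosen uniformly. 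This is exactly the generator $\Omega^{\uparrow,N}=\Omega^{\uparrow,\mathrm{grow},N}+\Omega^{\uparrow,\mathrm{res},N}$ restricted to $\Pi^1_N$.

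\textbf{Existence.} First I would construct the process pathwise: start from $\mathcal U^N_0\sim\mathbf P_0^N$, run the linear growth flow, and at each Poisson point apply a uniformly chosen $\theta_{k,l}$. Since $\mathbb{U}_N\subset\mathbb{U}_c$ is closed under both operations (growth preserves ultra-metricity because adding a constant to all off-diagonal entries preserves the ultra-metric inequality, and $\theta_{k,l}$ manifestly does too — cf.\ Lemma~\ref{l:21} and \eqref{pp11b}), the process stays in $\mathbb{U}_N$. Only finitely many jumps occur in any finite time interval almost surely, so the paths are cadlag (in fact piecewise continuous) in the Gromov-weak topology. One then checks the martingale property \eqref{13def} for $F=\Phi\in\Pi^1_N$ by the usual argument: between jumps $\tfrac{d}{dt}\Phi(\mathcal U^N_t)=\Omega^{\uparrow,\mathrm{grow},N}\Phi(\mathcal U^N_t)$ (this is precisely the content of the computation \eqref{darst1}, valid since $\phi\in\mathcal C^1_b$ so the chain rule applies and $\mathrm{div}(\phi)$ is bounded), while the compensated sum over jumps contributes exactly $\int_0^t\Omega^{\uparrow,\mathrm{res},N}\Phi(\mathcal U^N_s)\,ds$ by the Lévy-system / Dynkin formula for the Poisson jumps. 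Since $\Phi$ is bounded and $\Omega^{\uparrow,N}\Phi\in\Pi_N$ is bounded, the compensated process is a genuine (not merely local) martingale.

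\textbf{Uniqueness.} Here I would appeal to the general principle that a well-posed martingale problem for the jump part together with a well-posed one for the flow part yields a well-posed martingale problem for the superposition, provided the jump rates are bounded — which they are, being constant and equal to $\binom{N}{2}\gamma$. The cleanest route: any solution $\mathbf P$ of the $(\mathbf P_0^N,\Omega^{\uparrow,N},\Pi^1_N)$-martingale problem has, by boundedness of the jump rate, only finitely many jumps on compacts; stopping at successive jump times and using that between jumps the process solves the deterministic ODE $\dot{\underline{\underline r}}=2\underline{\underline 1}$ (uniqueness of ODE solutions), one shows inductively that the law of $(\mathcal U^N_{t})_{t\le T_n}$ — where $T_n$ is the $n$-th jump time — is determined. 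Since $\Pi^1_N$ separates points in $\mathbb{U}_N$ (noted just before the statement) and is closed under multiplication (it is defined as the generated algebra, \eqref{Pi1N}), it is measure-determining, so the finite-dimensional distributions, hence $\mathbf P$, are determined. Alternatively one can cite Theorem~4.4.1 or Proposition~4.1.7 in \cite{EthierKurtz86} on martingale problems for bounded-rate jump processes after verifying that $\Omega^{\uparrow,N}$ restricted to the core $\Pi^1_N$ generates a Feller semigroup on $\mathbb{U}_N$.

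\textbf{Main obstacle.} The routine parts are the pathwise construction and the ODE-uniqueness induction. The point requiring genuine care is the \emph{regularity} needed to run these arguments inside the quotient space $\mathbb{U}_N$ rather than in coordinates: one must make sure that the Gromov-weak topology restricted to $\mathbb{U}_N$ agrees with the quotient topology coming from the symmetric distance matrices, so that "cadlag in Gromov-weak" and "continuous between jumps" mean what one expects, and that $\Pi^1_N$ is large enough to be measure-determining on $\mathbb{U}_N$ (not just on $\mathbb{M}$). Once that identification is in place — which is essentially the observation, made in the text, that $\smallu\in\mathbb{U}_N$ is uniquely characterized by $\nu^{N,\smallu}$ — the PDMP machinery applies verbatim.
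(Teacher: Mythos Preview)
Your existence argument is essentially the paper's: both construct the process explicitly from the graphical representation (Poisson arrows plus deterministic growth) and observe that this solves the martingale problem by construction. Your write-up is more detailed about why the compensated process is a true martingale, but the idea is the same.

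For uniqueness the approaches differ. You invoke the PDMP structure: bounded total jump rate, deterministic flow between jumps, then either an induction over successive jump times or a direct appeal to the Ethier--Kurtz theory for bounded-generator martingale problems. The paper instead proves uniqueness via \emph{duality}: it observes that the $(\mathbf P_0^N,\Omega^{\uparrow,N},\Pi^1_N)$-martingale problem is dual to the tree-valued Kingman coalescent (with duality functions given by smooth polynomials based on sampling without replacement, paralleling Section~\ref{S:unique}), and cites Corollary~3.7 in~\cite{GLW05} for the analogous measure-valued statement. Your route is more elementary and self-contained for fixed $N$, exploiting that the state space is essentially finite-dimensional and the jump rate constant. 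The paper's route is less direct here but is deliberately chosen to mirror the uniqueness proof for the Fleming-Viot limit (Theorem~\ref{T:01}), where PDMP arguments are unavailable and duality is the only tool; establishing the $N$-coalescent duality also feeds into the coupling and long-time arguments later.
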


\begin{proof}
  Let $(\mathcal I^N, \preceq^N)$ in Definition \ref{def:finite} be
  such that the law of $\mathcal U_0^N$ equals $\mathbf P_0^N$. Then
  the tree-valued Moran dynamics given by Definition \ref{def:moran}
  solve the $(\mathbf P_0^N,\Omega^{\uparrow,N},\Pi_N^1)$-martingale
  problem, by construction. This proves {\em existence}.

  For {\em uniqueness} -- following the same line of argument as given
  in Section~\ref{S:unique} -- one can check that the $(\mathbf
  P_0^N,\Omega^{\uparrow,N},\Pi^1_N)$-martingale problem is dual to
  the tree-valued Kingman coalescent where the duality functions
  $\Phi\in\Pi_N^1$ are smooth polynomials that involve sampling
  without replacement (see, for example, Corollary~3.7 in~\cite{GLW05}
  where a similar duality is proved on the level of the measure-valued
  processes).
\end{proof}\sm

\subsection{Convergence to the Fleming-Viot generator}
\label{Sub:conv2}
The goal of this subsection is to show that the operator for the
tree-valued Fleming-Viot martingale problem is the limit of the
operators for the tree-valued Moran martingale problems. This is one
ingredient for the proof of Theorem~\ref{T:02} given in
Section~\ref{S:proofs}.

\begin{proposition} Let $\Phi\in\Pi^1$. There exist
  $\Phi_1\in\Pi_1^1, \Phi_2\in\Pi_2^1,\ldots$ such that
  \label{P:01}
  \begin{equation}\label{Pi1c}
    \lim_{N\to\infty}\sup_{\smallu\in\mathbb{U}_N}\big| \Phi_N(\smallu)-\Phi(\smallu)\big| = 0,
  \end{equation}
  and
  \begin{equation}\label{4.5.2a}
    \lim_{N\to\infty}\sup_{\smallu\in\mathbb{U}_N}\big|\Omega^{\uparrow,N}\Phi_N(\smallu)-\Omega^{\uparrow}\Phi(\smallu)\big|
    = 0.
  \end{equation}
\end{proposition}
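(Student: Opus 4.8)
It is enough to handle a single polynomial $\Phi=\Phi^{n,\phi}$ with $\phi\in{\mathcal C}^1_b(\R_+^{\N\choose 2})$ depending on $(r_{i,j})_{1\le i<j\le n}$ only. I would simply keep the test function: for $N\ge n$ set $\Phi_N:=\Phi_N^\phi$ as in \eqref{eq:phifinite}, viewing $\phi$ as a function on $\R_+^{N\choose 2}$ that ignores the coordinates $(r_{i,j})$ with $j>n$; since $\phi\in{\mathcal C}^1_b$ this gives $\Phi_N\in\Pi^1_N$, and for $N<n$ the choice of $\Phi_N$ is irrelevant. Passing to the labelled representative $(\{1,\dots,N\},r',\tfrac1N\sum_i\delta_i)$ of $\smallu\in\mathbb{U}_N$, the whole proof then comes down to comparing ``drawing labels with replacement'' --- which realises $\mu^{\otimes\N}$ and hence $\nu^\smallu$ --- with ``drawing labels without replacement'' --- which realises $\mu^{\otimes_\downarrow N}$ and hence $\nu^{N,\smallu}$.

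\textbf{The basic estimate.} Coupling the two samples so that they agree until the first repeated label, the total variation distance between the projections of $\nu^{N,\smallu}$ and of $\nu^\smallu$ onto $(r_{i,j})_{1\le i<j\le m}$ is bounded by the collision probability $1-\prod_{j=1}^{m-1}(1-j/N)\le\binom m2/N$, uniformly in $\smallu\in\mathbb{U}_N$. Hence for every bounded measurable $\psi$ depending on $(r_{i,j})_{1\le i<j\le m}$ only and every $N\ge m$,
\begin{equation}\label{e:P01tv}
  \sup_{\smallu\in\mathbb{U}_N}\big|\langle\nu^{N,\smallu},\psi\rangle-\langle\nu^\smallu,\psi\rangle\big|\le\|\psi\|_\infty\binom m2/N .
\end{equation}
Applying this with $\psi=\phi$, $m=n$ yields \eqref{Pi1c}. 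For the growth part, \eqref{darst1} and \eqref{eq:omega1} give $\Omega^{\uparrow,\mathrm{grow},N}\Phi_N(\smallu)-\Omega^{\uparrow,\mathrm{grow}}\Phi(\smallu)=\langle\nu^{N,\smallu}-\nu^\smallu,\mathrm{div}(\phi)\rangle$, and since $\mathrm{div}(\phi)$ is bounded and depends on $(r_{i,j})_{1\le i<j\le n}$ only, \eqref{e:P01tv} makes this $O(1/N)$ uniformly.

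\textbf{The resampling part} is the step that needs an extra idea. In $\Omega^{\uparrow,\mathrm{res},N}\Phi_N(\smallu)=\tfrac\gamma2\sum_{1\le k,l\le N}\big(\langle\nu^{N,\smallu},\phi\circ\theta_{k,l}\rangle-\langle\nu^{N,\smallu},\phi\rangle\big)$ I would show that only the $n^2$ summands with $k,l\le n$ are nonzero. If $l>n$, then $\theta_{k,l}$ does not change the coordinates $(r_{i,j})_{1\le i<j\le n}$, so $\phi\circ\theta_{k,l}=\phi$. If $l\le n<k$, then, writing the without-replacement sample as $(X_1,\dots,X_N)$ --- an exchangeable family --- the tuple $(X_1,\dots,X_{l-1},X_k,X_{l+1},\dots,X_n)$ has the same law as $(X_1,\dots,X_n)$, so again $\langle\nu^{N,\smallu},\phi\circ\theta_{k,l}\rangle=\langle\nu^{N,\smallu},\phi\rangle$. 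Since $\Omega^{\uparrow,\mathrm{res}}\Phi$ in \eqref{eq:omega2} already runs only over $k,l\le n$, this leaves
\begin{equation*}
  \Omega^{\uparrow,\mathrm{res},N}\Phi_N(\smallu)-\Omega^{\uparrow,\mathrm{res}}\Phi(\smallu)=\frac\gamma2\sum_{1\le k,l\le n}\big\langle\nu^{N,\smallu}-\nu^\smallu,\,\phi\circ\theta_{k,l}-\phi\big\rangle ,
\end{equation*}
with every $\phi\circ\theta_{k,l}$ ($k,l\le n$) bounded by $\|\phi\|_\infty$ and depending on $(r_{i,j})_{1\le i<j\le n}$ only; hence \eqref{e:P01tv} bounds this by $\gamma n^2\|\phi\|_\infty\binom n2/N\to0$, uniformly on $\mathbb{U}_N$. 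Adding the growth and resampling bounds gives \eqref{4.5.2a}.

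\textbf{Where the difficulty sits.} Everything else is routine; the one point requiring care is the reduction in the resampling term. A priori $\Omega^{\uparrow,\mathrm{res},N}$ carries about $N^2$ summands, and one must recognise that those involving an index exceeding $n$ vanish \emph{identically} --- by exchangeability of sampling without replacement together with the finite dependence of $\phi$ --- and not merely that they are small. Once this is in place, the single total-variation estimate \eqref{e:P01tv} does all the remaining work.
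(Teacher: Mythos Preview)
Your proof is correct and follows essentially the same approach as the paper: take the same test function $\phi$ for $\Phi_N$ (the paper formalises this via an extension operator $\iota_N$, but for $N\ge n$ and $\phi$ depending only on the first $n$ coordinates this is your construction) and compare sampling with versus without replacement via a uniform total-variation bound of order $1/N$. The one place where your write-up is actually more complete than the paper's is the resampling term: the paper simply asserts that $\Omega^{\uparrow,N}\Phi_N(\smallu)=\langle\nu^{N,\smallu},\tilde\psi\rangle$ with $\tilde\psi=\psi\circ\iota_N$ for the same $\psi$ appearing in $\Omega^\uparrow\Phi=\langle\nu^\smallu,\psi\rangle$, which tacitly drops the summands with $k>n$ or $l>n$; your exchangeability argument is exactly what is needed to justify this step.
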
\sm

\begin{proof}
  First, define the extension operator
  \begin{align}
    \iota_N: \begin{cases}\mathbb
      R^{\binom{N}{2}} & \mapsto \mathbb R^{\binom{\mathbb N}{2}} \\
      (r_{i,j})_{1\leq i<j\leq N} & \mapsto (r_{i_{\simeq N}\wedge
        j_{\simeq N}},r_{i_{\simeq N}\vee j_{\simeq N}})_{1\le
        i<j} ,\end{cases}
  \end{align}
  where $i_{\simeq N}:=1+((i-1)\mbox{ mod }N)$.  Fix
  $\Phi=\Phi^{n,\phi}\in\Pi^1$ for $n\in\mathbb N$, $\phi\in{\mathcal
    C}_b^1(\mathbb R_+^{\binom{\mathbb N}{2}})$. For $N\geq n$ set
  $\Phi_N := \Phi_N^{\phi\circ\iota_N} \in \Pi_N^1$. By the definition
  of the $N$-distance matrix distribution of a representative
  \eqref{mono1ba}, there is a $C>0$ such that
  \begin{equation}
    \label{eq:convphi}
    \begin{aligned}
      \sup_{\smallu\in\mathbb U_N}\big|\Phi_N(\smallu) -
      \Phi(\smallu)\big| & = \sup_{\smallu\in\mathbb U_N}\big| \langle
      \nu^{N,\smallu}, \phi\circ\iota_N
      \rangle - \langle \nu^\smallu, \phi\rangle\big| \\
      & = \sup_{\smallu\in\mathbb U_N}\big| \langle (\iota_N)_\ast
      \nu^{N,\smallu} - \nu^\smallu, \phi\rangle\big| \\ & \leq
      \frac{C}{N} ||\phi||
    \end{aligned}
  \end{equation}
  for all $N\geq n$. This shows \eqref{Pi1c}. For \eqref{4.5.2a}
  observe that $\Omega^{\uparrow} \Phi(\smallu) = \langle
  \nu^{\smallu}, \psi\rangle$ and $\Omega^{\uparrow,N}
  \Phi_N(\smallu)= \langle \nu^{N,\smallu}, \widetilde\psi\rangle$ for
  continuous, bounded functions $\psi$ and $\widetilde\psi$ satisfying
  $\widetilde\psi = \psi\circ\iota_N$. Hence, \eqref{4.5.2a} follows from
  \eqref{eq:convphi}.
\end{proof}\sm

\subsection{Coupling tree-valued Moran dynamics}
\label{sec:coup}
In this section we show how to couple two tree-valued Moran
dynamics. In particular, using a metric on ultra-metric measure spaces
introduced in \cite{GrePfaWin2006a}, we show that the coupled
processes become closer as time evolves (Proposition
\ref{P:tbc}). This will be an important ingredient in showing the
Feller property of the tree-valued Fleming-Viot dynamics stated in
Theorem~\ref{T:01}.

We fix $N\in\N$ and $\mathcal I^N := \{1,...,N\}$. Informally, we
couple two tree-valued Moran dynamics by using the same resampling
events. For this, recall the Poisson processes $\eta=\{\eta^{i,j};
i,j\in\mathcal I^N\}$ from Definition~\ref{def:moran} which determine
resampling events. Recall from Definition \ref{def:finite} the notion
of ancestors $A_s(i,t)$, $i\in\mathcal I^N$ and $s\leq t$.

In order to be in a position to compare coupled Moran models, we use
the following metric on $\mathbb U$ introduced in \cite[Section
10]{GrePfaWin2006a}.

\begin{definition}[Modified Eurandom metric]
  The modified Eurandom distance between $\smallu_1 =
  \overline{(U_1,r_1,\mu_1)}$ and
  $\smallu_2=\overline{(U_2,r_2,\mu_2)}\in\mathbb U$ is given by
  \label{def:eur}
  \begin{equation}
  \begin{aligned}
    \label{eq:pqe7}
    &d'_{\text{Eur}}(\smallu_1, \smallu_2)
   \\
  &:= \inf_{\widetilde\mu} \int_{U_1^2}\int_{U_2^2}
    \widetilde\mu(d(i_1, i_2)) \widetilde\mu(d(j_1, j_2))\,\big|r_1(i_1,
    j_1) - r_2(i_2, j_2)\big|\wedge 1
  \end{aligned}
  \end{equation}
  where the infimum is taken over all couplings of $\mu_1$ and $\mu_2$,
  i.e.,
  \begin{align}
    \widetilde\mu \in \big\{\widetilde{\mu}'\in\mathcal M_1(U_1\times U_2):\, (\pi_k)_\ast
    \widetilde\mu' = \mu_k, k=1,2\big\},
  \end{align}
  with $\pi_k: U_1\times U_2\to U_k$ denoting the projection on the
  $k^{\text{th}}$ coordinate, $k=1,2$.
\end{definition}

\begin{remark}[Connection to the Gromov-weak topology]\label{rem:eur}
    By Proposition~10.5 in~\cite{GrePfaWin2006a}, the distance
  $d_{\text{Eur}}'$ is indeed a metric and generates the Gromov-weak
  topology but is not complete. In particular, for $\mathbb U$-valued
  random variables $\mathcal U$, $\mathcal U_1$, $\mathcal U_2,...$
  which are all defined on the same probability space, we find that
  $\mathcal U_n \Rightarrow \mathcal U$, as $n\to\infty$, iff
  $\mathbf{E}[d_{\text{Eur}}'(\mathcal U_n, \mathcal U)]\to 0$, as
  $n\to\infty$.  \hfill$\qed$
  \label{Rem:10}
\end{remark}\sm

\begin{remark}[Modified Eurandom metric on $\mathbb
  U_N$]
Recall $\mathbb U_N$ from~\eqref{e:MoranN}, and
    let $\smallu_k=\overline{(\mathcal I^N, \widetilde r_k, \mu_k)}$,
    $k=1,2$, be two $\mathbb U_N$-valued random variables. Since
    $\mu_k$ has atoms of size $1/N$, $k=1,2$, the modified Eurandom
  metric is given by\label{rem:eur2}
  \begin{align}\label{eq:eur1}
    d'_{\text{Eur}}(\smallu_1, \smallu_2) =
    \inf_{\sigma\in\Sigma_{\mathcal I^N}}
    \frac{1}{N^2}\sum_{i,j\in\mathcal I^N} |\widetilde r_1(i,j) -
    \widetilde r_2(\sigma(i), \sigma(j))|\wedge 1,
  \end{align}
  where $\Sigma_{\mathcal I^N}$ is the set of permutations of $\mathcal
  I^N$. Moreover, there exist $(\mathcal I^N, r_k, \mu_k)\in\smallu_k,
  k=1,2$ such that
  \begin{align}\label{eq:eur2}
    d'_{\text{Eur}}(\smallu_1, \smallu_2) =
    \frac{1}{N^2}\sum_{i,j\in\mathcal I^N} |r_1(i,j) - r_2(i, j)|\wedge
    1.
  \end{align}
  \hfill$\qed$\label{rem:eur2}
\end{remark}\sm

\begin{definition}[Coupled tree-valued Moran dynamics]
  \label{def:cMd}
  For $\mathcal I = (\mathcal I_t)_{t\geq 0}$ and $\mathcal
  I_t := \mathcal I^N := \{1,...,N\}$, let $\preceq^1_0, \preceq^2_0$
  be two partial orders on $(-\infty, 0]\times\mathcal I^N$, both
  satisfying (iii) in Definition \ref{def:finite}. Moreover, let
  $\eta$ be a realization of the Poisson processes given in
  Definition~\ref{def:moran}, defining the processes
  $\preceq^1:=(\preceq_t^1~)_{t\geq 0}$ and
  $\preceq^2:=(\preceq^2_t)_{t\geq 0}$ as in
  Definition~\ref{def:moran}. Then, for $(\mathcal U_t^{N,k})_{t\geq
    0}$, read off from $(\mathcal I, \preceq^{k})$, $k=1,2$, the
  process $(\mathcal U_t^{N,1}, \mathcal U_t^{N,2})_{t\geq 0}$ is
  referred to as \emph{the coupled tree-valued Moran dynamics} started
  in $(\mathcal U_0^{N,1}, \mathcal U_0^{N,2})$.
\end{definition}\sm

\begin{proposition}[Contraction of coupled tree-valued
  Moran dynamics]
  Let $(\mathcal U_t^{N,1}, \mathcal U_t^{N,2})_{t\geq 0}$
  be
  \label{P:tbc}
  the coupled tree-valued Moran dynamics started in
  $(\mathcal U^{N,1}, \mathcal U^{N,2})$. Then for all $t>0$,
      \begin{align}
    \label{eq:pqe5}
    \mathbf E[d_{\text{Eur}}' \big(\mathcal U_t^{N,1}, \mathcal
    U_t^{N,2}\big)] = e^{-\gamma t} d_{\text{Eur}}' \big(\mathcal
    U_0^{N,1}, \mathcal U_0^{N,2}\big).
  \end{align}
\end{proposition}\sm

\begin{proof}
    Recall from Definition~\ref{def:finite} that $A_s(i,t)$ is the
  ancestor of $(i,t)$ by time $s$ and from~\eqref{eq:854} that $r_t^1$,
  $r_t^2$ are the metrics given by the coupled Moran dynamics by time
  $t\geq 0$.

  By the definition of the coupled tree-valued Moran dynamics, for
  $i,j\in\mathcal I_N$,
  \begin{align}
    \label{eq:pqe3}
    \big|r_t^{1}(i,j) - r_t^{2}(i,j)\big| =
    \big|r_0^{1}(A_0(i,t),A_0(j,t)) - r_0^{2}(A_0(i,t),A_0(j,t))\big|.
  \end{align}

   Let $I,J$ be independent, uniformly distributed on
    $\mathcal I_N$ and independent of all other random
    variables. Given $I\neq J$, we distinguish two cases: (i) $s\in
  \eta^{A_s(I,t), A_s(J,t)} \cup \eta^{A_s(J,t), A_s(I,t)}$ for some
  $0\leq s\leq t$. Here, the ancestral lines of $I$ and $J$ were
  affected by a joint resampling event, resulting in $A_0(I,t) =
  A_0(J,t)$. This event happens with probability $1-e^{-\gamma
    t}$. (ii)~In the other case, occurring with probability
  $e^{-\gamma t}$, we find that $A_0(I,t)$ and $A_0(J,t)$ are again
  distributed as $I$ and $J$. Hence, for all $t\ge 0$, by
  \eqref{eq:eur2},
  \begin{equation}
    \label{eq:pqe1}
    \begin{aligned}
      \mathbf E[d_{\text{Eur}}' (\mathcal U_t^{N,1},  \mathcal
      U_t^{N,2})] &= \mathbf E[|r_t^{1}(I,J) - r_t^{2}(I,J)|\wedge 1]
      \\ & = \mathbf E[|r_0^{1}(A_0(I,t),A_0(J,t)) -
      r_0^{2}(A_0(I,t),A_0(J,t))| \wedge 1] \\ & = e^{-\gamma t}
      \mathbf E[|r_0^{1}(I,J) - r_0^{2}(I,J)| \wedge 1] \\ & =
      e^{-\gamma t} d_{\text{Eur}}' ( \mathcal U^{N,1}, \mathcal
      U^{N,2}),
    \end{aligned}
  \end{equation}
  as claimed.
\end{proof}\sm

\section{Limit points are compact}
\label{sub:51}
Recall from Definition~\ref{def:moran} the tree-valued Moran dynamics
${\mathcal U}^{N}$ with population size $N\in\N$. In this section we
show that potential limit points of the sequence $\{{\mathcal
  U}^N;\,N\in\N\}$ have c\`adl\`ag sample paths in $\mathbb U$ and
take values in the space $\mathbb U_c$ of {\em compact} ultra-metric
measure spaces for $t>0$. In Subsection~\ref{S:compact} we state a
sufficient condition for relative compactness in $\mathbb M_c$ and
give in Subsection \ref{sub:cpopu} a criterion for a sequence of
population models to satisfy the compact containment condition. In
Subsection~\ref{sub:cpapp} we apply this criterion to show that the
sequence of tree-valued Moran dynamics $\mathcal U^N$ satisfies the
compact containment condition.

\subsection{Relative compactness in $\mathbb M_c$}
\label{S:compact}
We give a criterion for a set to be {\em relatively compact} in
$\mathbb M_c$.  In this subsection we are dealing with general (not
necessarily ultra-) metric measure spaces. We define for
$\smallx\in\mathbb M$ the \emph{distance distribution}
$w_{\smallx}\in{\mathcal M}_1(\R_+)$ by
\begin{equation}\label{eq:dist}
  w_{\smallx}(A)
  := \nu^{\smallx}\big\{\underline{\underline{r}}\in\R_+^{\N\choose 2}:\,r_{1,2}\in A\big\},
\end{equation}
for all $A\in{\mathcal B}(\mathbb R_+)$. Recall from \cite[Proposition
7.1]{GrePfaWin2006a} the following characterization of relative compactness.

\begin{proposition}[Characterization of relative compactness in
  $\mathbb M$]\mbox{}\\
  \label{noteP:05}
  A set $\Gamma\subseteq \mathbb M$ is relatively compact in the
  Gromov-weak topology iff the following two conditions hold:
  \begin{itemize}
  \item[$(i)$] $\{w_{\smallx}: \smallx\in\Gamma\}$ is tight in
    $\mathcal M_1(\mathbb R_+)$.
  \item[$(ii)$] For all $\varepsilon>0$ there exists $C_\varepsilon>0$
    such that $\sup_{\smallx\in \Gamma} \widetilde
    S_\varepsilon(\smallx) \leq C_\varepsilon,$
  \end{itemize}
  where for $(X,r,\mu)\in\smallx\in\mathbb M$
  \begin{align}
    \widetilde S_\varepsilon(\smallx) := \min\Big\{K: \exists
    x_1,...,x_K \in X: \mu \Big( \bigcup_{k=1}^K
    B_{\varepsilon}(x_i)\Big)>1-\varepsilon\Big\}.
  \end{align}
\end{proposition}

\noindent
The relative compactness criterion in $\mathbb M_c$ reads as follows:

\begin{proposition}[Criterion for relative compactness in
  $\mathbb{M}_c$]
  A set $\Gamma\subseteq\mathbb M_c$ is relatively compact in the
  \label{noteP:04}
  Gromov-weak topology on $\mathbb M_c$ if the following two
  conditions are satisfied.
  \begin{itemize}
  \item[$(i)$] $\{w_{\smallx}: \smallx\in\Gamma\}$ is tight in
    $\mathcal M_1(\mathbb R_+)$.
  \item[$(ii)$] For all $\varepsilon>0$ there exists
    $N_\varepsilon\in\mathbb N$ such that $\sup_{\smallx\in
      \Gamma}S_\varepsilon(\smallu)\leq N_\varepsilon,$ where
    $S_\varepsilon$ is the minimal number of open $\varepsilon$-balls
    needed to cover $(\text{supp}(\mu),r)$ for $(X,r,\mu)\in\smallx
    \in \mathbb M$.
  \end{itemize}
\end{proposition}\sm

\begin{remark}[Relative compactness criterion is only sufficient]
  By Proposition \ref{noteP:05}, (i) is a necessary condition
  \label{Rem:09b}
  for relative compactness in $\mathbb M$. Note that (ii) is not
  necessary for relative compactness in $\mathbb M_c$: Consider, for
  example,
\begin{align}\label{eq:exCp}
  \Gamma = \{\smallx_n = \overline{(\{0,1,...,n\}, r_{\mathrm{eucl}},
    \mathrm{Bin}(n, \tfrac{1}{n^2}))}: n\in\N\}\subset \mathbb M_c.
\end{align}
Since $\smallx_n \to \overline{(\N, r_{\mathrm{eucl}}, \delta_0)}$, as
$n\to\infty$, the set $\Gamma$ is relatively compact, but (ii) does
not hold. \hfill $\qed$
\end{remark}
\sm

The proof of Proposition \ref{noteP:04} is based on two Lemmata. Recall
that for a metric space $(X,r)$ an {\em $\varepsilon$-separated set}
is a subset $X'\subseteq X$ such that $r(x',y')>\varepsilon$, for all
$x',y'\in X'$ with $x'\not =y'$.

\begin{lemma}[Relation between $\varepsilon$-balls and
  $\varepsilon$-separated nets]
\label{L:05}
Fix $N\in\N$, a metric space $(X,r)$ with $\# X\ge N+1$
and $\varepsilon>0$. The following hold.
\begin{itemize}
\item[(i)] If $(X,r)$ can be covered by $N$ open balls of radius
  $\varepsilon$, then $(X,r)$ has no $2\varepsilon$-separated sets of
  cardinality $k\geq N+1$.
\item[(ii)] If $(X,r)$ has no $\varepsilon$-separated set of
  cardinality $N+1$, then $(X,r)$ can be covered by $N$ closed balls
  of radius $\varepsilon$.
\end{itemize}
\end{lemma}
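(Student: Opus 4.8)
The plan is to prove both statements by elementary combinatorics on separated sets and ball covers, using only the triangle inequality; no metric-measure structure is needed here.

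For (i), I would argue by contraposition. Suppose $(X,r)$ \emph{does} have a $2\varepsilon$-separated set $S=\{y_0,y_1,\ldots,y_N\}$ of cardinality $N+1$, and suppose $(X,r)$ is covered by open balls $B(x_1,\varepsilon),\ldots,B(x_N,\varepsilon)$. Since there are $N+1$ points of $S$ and only $N$ balls, by the pigeonhole principle two distinct points $y_a,y_b\in S$ lie in the same ball $B(x_m,\varepsilon)$. Then $r(y_a,y_b)\le r(y_a,x_m)+r(x_m,y_b)<\varepsilon+\varepsilon=2\varepsilon$, contradicting that $S$ is $2\varepsilon$-separated. Hence no such $S$ exists, which is exactly the claim.

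For (ii), I would use a greedy/maximality argument. Choose a maximal $\varepsilon$-separated subset $S=\{x_1,\ldots,x_k\}\subseteq X$ (maximal with respect to inclusion; such a set exists, e.g.\ by Zorn's lemma, or by a greedy construction, and one checks the separation condition is preserved under unions of chains). By hypothesis $(X,r)$ has no $\varepsilon$-separated set of cardinality $N+1$, so $k\le N$. Maximality of $S$ means that for every $x\in X$ there is some $x_i\in S$ with $r(x,x_i)\le\varepsilon$ --- otherwise $S\cup\{x\}$ would still be $\varepsilon$-separated, contradicting maximality. Therefore $X=\bigcup_{i=1}^k \overline{B}(x_i,\varepsilon)$, a cover by $k\le N$ closed balls of radius $\varepsilon$ (padding with arbitrary extra closed $\varepsilon$-balls if one insists on exactly $N$ of them).

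I do not expect any serious obstacle here; the only mild subtlety is being careful about the open/closed distinction and the factor of $2$ between the two parts (an $\varepsilon$-ball cover controls $2\varepsilon$-separated sets, while an $\varepsilon$-separated net gives an $\varepsilon$-ball cover), and about the existence of a maximal $\varepsilon$-separated set in a possibly non-separable metric space, which is handled by Zorn's lemma. The hypothesis $\#X\ge N+1$ is only there to make the statements non-vacuous and plays no role in the arguments.
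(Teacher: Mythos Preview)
Your proof is correct and follows essentially the same approach as the paper: pigeonhole for (i), and a maximal $\varepsilon$-separated set for (ii). The only minor remark is that invoking Zorn's lemma is unnecessary here, since the hypothesis already bounds the cardinality of any $\varepsilon$-separated set by $N$, so one can simply take a set of maximal cardinality (a finite maximum over $\{1,\ldots,N\}$), exactly as the paper does.
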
\sm

\begin{proof} (i) Assume that $x_1,...,x_N\in X$ are such that
  $X=\bigcup_{i=1}^N B_\varepsilon(x_i)$, where we denote by
  $B_\varepsilon(x)$ the open ball around $x\in X$ of radius
  $\varepsilon>0$.  Choose $(N+1)$ distinct points $y_1,...,y_{N+1}\in
  X$. By the pigeonhole principle, two of the points must fall into
  the same ball $B_\varepsilon(x_i)$, for some $i=1,...,N$, and are
  therefore in distance smaller than $2\varepsilon$. Hence
  $\{y_1,...,y_{N+1}\}$ is not $2\varepsilon$-separated. Since
  $y_1,...,y_{N+1}\in X$ were chosen arbitrarily, the claim follows.

  (ii) Again, we proceed by contradiction.  Let $K$ be the maximal
  possible cardinality of an $\varepsilon$-separated set in
  $(X,r)$. By assumption, $K\le N$. Assume that
  $S^K_\varepsilon:=\{x_1,...,x_K\}$ is an $\varepsilon$-separated set
  in $(X,r)$. We claim that $X=\bigcup_{i=1}^K
  \overline{B}_{\varepsilon}(x_i)$ with
  $\overline{B}_\varepsilon(x):=\{x'\in
  X:\,r(x,x')\le\varepsilon\}$. Indeed, assume, to the contrary, that
  $y\in X$ is such that $r(y,x_i)>\varepsilon$, for all $i=1,...,K$,
  then $S^K_\varepsilon\cup\{y\}$ is an $\varepsilon$-separated set of
  cardinality $K+1$, which gives the contradiction.
\end{proof}\sm

\begin{lemma}[Bounds on the number of balls to cover a limit point]
\label{l:app2}
Fix $\varepsilon>0$ and $N\in\N$. Let $\smallx=\overline{(X,r,\mu)}$,
$\smallx_1=\overline{(X_1,r_1,\mu_1)}$,
$\smallx_2=\overline{(X_2,r_2,\mu_2)}$, ... be elements of $\mathbb
M$ such that $\smallx_n\to\smallx$ in the Gromov-weak topology, as
$n\to\infty$. If $(\mathrm{supp}(\mu_1),r_1)$,
$(\mathrm{supp}(\mu_2),r_2)$, ... can be covered by $N$ open balls of
radius $\varepsilon$ then $(\mathrm{supp}(\mu),r)$ can be covered by
$N$ closed balls of radius $2\varepsilon$.
\end{lemma}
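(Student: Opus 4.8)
The plan is to transfer the covering property from the sequence to the limit by recasting it in terms of $\varepsilon$-separated sets, using the two halves of Lemma~\ref{L:05} as the translation device. Concretely: by Lemma~\ref{L:05}(i), the hypothesis that $(\mathrm{supp}(\mu_n),r_n)$ is covered by $N$ open $\varepsilon$-balls implies that $(\mathrm{supp}(\mu_n),r_n)$ has \emph{no} $2\varepsilon$-separated set of cardinality $N+1$, for every $n$. On the other side, by Lemma~\ref{L:05}(ii) applied with $2\varepsilon$ in place of $\varepsilon$, it suffices to prove that $(\mathrm{supp}(\mu),r)$ has no $2\varepsilon$-separated set of cardinality $N+1$ (if $\#\mathrm{supp}(\mu)\le N$ the conclusion is trivial, each point serving as the centre of one closed $2\varepsilon$-ball). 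So the whole lemma reduces to showing that the property ``no $(N+1)$-point $2\varepsilon$-separated set'' passes to the Gromov-weak limit.

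This last step I would do by contradiction. Assume $x_1,\dots,x_{N+1}\in\mathrm{supp}(\mu)$ are pairwise at distance $>2\varepsilon$, and pick $\delta>0$ small enough that $r(x_i,x_j)-2\delta>2\varepsilon$ for all of the finitely many pairs $i\neq j$. Since each $x_i$ lies in $\mathrm{supp}(\mu)$ we have $p_i:=\mu(B_\delta(x_i))>0$. Recalling from \eqref{mono1b} and Remark~\ref{Rem:22} that $\nu^{\smallx}=(R^{(X,r)})_\ast\mu^{\otimes\mathbb N}$, the event that an i.i.d.\ $\mu$-sample $(y_1,y_2,\dots)$ satisfies $y_i\in B_\delta(x_i)$ for $i=1,\dots,N+1$ has probability $\prod_{i=1}^{N+1}p_i>0$, and on this event the triangle inequality gives $r(y_k,y_l)\ge r(x_k,x_l)-2\delta>2\varepsilon$ for all $1\le k<l\le N+1$. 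Hence $\nu^{\smallx}(A)\ge\prod_{i=1}^{N+1}p_i>0$ for the cylinder set $A:=\{\underline{\underline r}:\,r_{k,l}>2\varepsilon\text{ for all }1\le k<l\le N+1\}$, which is open in the product topology on $\mathbb R_+^{\binom{\mathbb N}{2}}$.

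Now I invoke Gromov-weak convergence: since $\nu^{\smallx_n}\Rightarrow\nu^{\smallx}$, the Portmanteau theorem for open sets yields $\liminf_{n\to\infty}\nu^{\smallx_n}(A)\ge\nu^{\smallx}(A)>0$, so $\nu^{\smallx_n}(A)>0$ for some $n$. For that $n$, an i.i.d.\ $\mu_n$-sample $(z_1,z_2,\dots)$ satisfies $r_n(z_k,z_l)>2\varepsilon$ for all $1\le k<l\le N+1$ with positive probability; in particular such a configuration of $N+1$ \emph{distinct} points exists in $\mathrm{supp}(\mu_n)$ (all mutual distances being strictly positive), i.e.\ $(\mathrm{supp}(\mu_n),r_n)$ has a $2\varepsilon$-separated set of cardinality $N+1$ — contradicting what we derived from Lemma~\ref{L:05}(i). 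This shows $(\mathrm{supp}(\mu),r)$ has no $2\varepsilon$-separated set of cardinality $N+1$, and the lemma follows by Lemma~\ref{L:05}(ii). I do not anticipate a real obstacle; the only care needed is to choose $\delta$ uniformly over the finitely many pairs, to keep $A$ genuinely open so that Portmanteau applies in the lower-semicontinuity direction, and to note the distinctness of the sampled witnesses so that Lemma~\ref{L:05}(i) is legitimately applicable to $(\mathrm{supp}(\mu_n),r_n)$.
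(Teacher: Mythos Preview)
Your proof is correct and follows essentially the same approach as the paper: both arguments use Lemma~\ref{L:05}(i) to convert the covering hypothesis into the absence of a $2\varepsilon$-separated $(N+1)$-set for each $\smallx_n$, invoke Portmanteau on the open cylinder $\{r_{k,l}>2\varepsilon:\,1\le k<l\le N+1\}$ to transfer this property to the limit, and then apply Lemma~\ref{L:05}(ii). The only cosmetic difference is that the paper argues directly that $(\rho_{N+1})_\ast\nu^{\smallx}(B_{2\varepsilon})\le\liminf_n(\rho_{N+1})_\ast\nu^{\smallx_n}(B_{2\varepsilon})=0$, whereas you phrase the same step as a proof by contradiction and spell out the sampling argument behind the equivalence ``$\nu^{\smallx}(A)>0$ $\Leftrightarrow$ there is a $2\varepsilon$-separated $(N+1)$-set in $\mathrm{supp}(\mu)$''.
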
\sm

\begin{proof}
    Define the restriction operator $\rho_N\big((r_{i,j})_{1\leq
    i<j}\big) := (r_{i,j})_{1\leq i<j\leq N}$. By Lemma~\ref{L:05}(i),
  there is no $n\in\N$ for which $(\mathrm{supp}(\mu_n),r_n)$ has a
  $2\varepsilon$-separated set of cardinality $N+1$. Set
  $B_{2\varepsilon}:=(2\varepsilon,\infty)^{{N+1\choose 2}}$.  Notice
  that $\rho_{N+1}^{-1}(B_{2\varepsilon})$ is open. Moreover,
  $(\mathrm{supp}(\mu),r)$ has a $2\varepsilon$-separated set of
  cardinality $N+1$ if and only if $(\rho_{N+1})_\ast\nu^\smallx
  (B_{2\varepsilon})>0$. However,
  \begin{equation}\label{eq:pre1}
    \begin{aligned}
      0 \leq (\rho_{N+1})_\ast\nu^{\smallx}(B_{2\varepsilon}) \leq
      \liminf_{n\to\infty}
      (\rho_{N+1})_\ast\nu^{\smallx_n}(B_{2\varepsilon}) = 0
    \end{aligned}
  \end{equation}
  by Theorem~5(b) in~\cite{GrePfaWin2006a} together with the
  Portmanteau theorem, therefore
  $(\rho_{N+1})_\ast\nu^\smallx(B_{2\varepsilon})=0$. By
  Lemma~\ref{L:05}(ii), $(\mathrm{supp}(\mu),r)$ can therefore be
  covered by $N$ closed balls of radius $2\varepsilon$.
\end{proof}\sm

\begin{proof}[Proof of Proposition~\ref{noteP:04}]
  Assume (i) and (ii) hold for a set $\Gamma\subseteq \mathbb
  M_c$. First note that by Theorem 2 in \cite{GrePfaWin2006a} the set
  $\Gamma$ is relatively compact in $\mathbb M$. It remains to show
  that every limit point of $\Gamma$ is compact.  To see this take
  $\smallx\in\mathbb M$ and $\smallx_1,\smallx_2,\ldots \in \Gamma$
  such that $\smallx_n\rightarrow\smallx$ in the Gromov-weak topology,
  as $n\to\infty$, and let $\varepsilon>0$.  By Assumption(ii)
  together with Lemma~\ref{l:app2}, $(\mathrm{supp}(\mu),r)$ can be
  covered by $N_{\varepsilon/2}$ closed balls of radius
  $\varepsilon$. Therefore, $\smallx$ is totally bounded which implies
  $\smallx\in\mathbb M_c$, and we are done.
\end{proof}\sm

\subsection{Compact containment (Proof of Proposition~\ref{PP:02})}
\label{sub:cpopu}
Recall that Proposition~\ref{PP:02} is based on
  the general notion of a finite population model; see
  Definition~\ref{def:finite}. In particular $\preceq =
  (\preceq_t)_{t\geq 0}$ is the process of partial orderings connected
  to genealogical relationships, $\mathcal I^N = (\mathcal
  I^N_t)_{t\geq 0}$ is the process of population sizes and $\tau$ is
  the lifetime of the population. Moreover, let for each $N\in\N$,
$(\mathcal U_t^N)_{t\in[0,\tau^N)}$ be the tree-valued population
dynamics read off from $({\mathcal I}^N,\preceq^N)$.

As a preparation we show two auxiliary lemmata which discuss the
consequences of the assumptions made in Proposition~\ref{PP:02}.
Recall the distance distribution $w_{\smallu}$ from (\ref{eq:dist}).

\begin{lemma}[Bounds on the distance distribution under
  Assumption~(i)]
  Fix $T>0$, and assume that $\{\mathcal U_0^N: N\in\N\}$ is tight in
  $\mathbb U$. If condition (i) of Proposition~\ref{PP:02} holds,
  then for all $\varepsilon>0$ there is a $C_\varepsilon>0$ such that
  \label{l1}
  \begin{equation}
    \label{eq:l11}
    \limsup_{N\to\infty}\mathbf P^N\big\{\sup_{t\in[0,T\wedge \tau^N)} w_{\mathcal
      U_t^N}([C_\varepsilon,\infty)) > \varepsilon\big\} \leq \varepsilon.
  \end{equation}
\end{lemma}\sm

\begin{proof} Let $\varepsilon>0$. Choose $\delta=\delta(\tfrac
  {\varepsilon}{4})>0$ such that (\ref{P:171}) holds for and
  $N\in\mathbb N$ and $\varepsilon/4$ (instead of $\varepsilon$) and
  any $\mathcal J^N$ such that $\mathcal J^N\subseteq \mathcal I_0^N$
  is ${\mathcal A}^N_{0}$-measurable with $\mu^N(\mathcal J^N)\leq
  \delta$. Since $\{\mathcal U_0^N:\,N\in\N\}$ is tight in $\mathbb
  U$, we can find such ${\mathcal A}_0^N$-measurable $\mathcal J^N
  \subseteq \mathcal I_0^N$, $N\in\N$, and a constant
  $\widetilde{C}_\varepsilon>0$ such that $\mu_0^N(\mathcal J^N)\leq
  \delta$, almost surely, and
  \begin{equation}
    \label{ww}
      \liminf_{N\to\infty}\mathbf{P}^N\big\{\mathcal I^N_0\setminus
        \mathcal J^N \mbox{ has diameter at most
      }\widetilde{C}_\varepsilon\big\}>1-\tfrac \varepsilon 2
  \end{equation}
  (see (i) in Proposition \ref{noteP:05}). Clearly, on the event that
  $\mathcal I_0^N\setminus \mathcal J^N$ has diameter at most
  $\widetilde{C}_\varepsilon$, the set $D_t(0,\mathcal I_0^n\setminus
  \mathcal J^N)$ of descendants of $\mathcal I_0^n\setminus \mathcal
  J^N$ at time $t$ has diameter at most
  $\widetilde{C}_\varepsilon+2t$. Hence
  \begin{equation}
    \label{ww1}
    \begin{aligned}
      \limsup_{N\to\infty} &
      \mathbf{P}^N\big\{\sup_{t\in[0,T\wedge\tau^N)} w_{{\mathcal
          U}^N_t}([\widetilde{C}_\varepsilon+2T,\infty))>\varepsilon\big\}
      \\ &\le \limsup_{N\to\infty} \mathbf P^N\big\{ \sup_{t\in [0,
        T\wedge\tau^N)} \mu_t^N(D_t(0,\mathcal J^N))>\tfrac
      \varepsilon{4}\big\} \\ & \qquad + \mathbf{P}^N\big\{\mathcal
      I_0^N\setminus \mathcal J^N \text{ has diameter at least
      }\widetilde C_\varepsilon\big\} \\ & \leq \varepsilon
    \end{aligned}
  \end{equation}
  by Assumption (i). The claim follows.
\end{proof}\sm

For the next lemma recall that for all $\varepsilon>0$ and $\mathcal
U_t = \smallu=\overline{(U,r,\mu)}\in\mathbb{U}$,
$S_{2\varepsilon}(\mathcal U_t)$ from \eqref{eq:861} and
$\widetilde{S}_{2\varepsilon}(\mathcal U_t)$ from \eqref{eq:861b}
denote the minimal numbers of $2\varepsilon$-balls needed to cover
$\mathrm{supp}(\mu)$ or to cover $\mathrm{supp}(\mu)\setminus V$ where
the exceptional set $V\subseteq U$ satisfies $\mu(V)\le
2\varepsilon$. In particular, these definitions coincide with the same
notions introduced in Propositions~\ref{noteP:05} and~\ref{noteP:04}.

\begin{lemma}[Uniform bounds on $S_{2\varepsilon}$ and $\widetilde
  S_{2\varepsilon}$]
  Fix $T>0$.
  \label{l2}
  \begin{itemize}
  \item [(a)] Assume Condition~(ii.i) from
    Proposition~\ref{PP:02}. Then for all $\varepsilon>0$ we can find
    $C_\varepsilon>0$ such that
    \begin{equation}\begin{aligned}\label{eq:l21}
        \limsup_{N\to\infty} \mathbf
        P^N\big\{\sup_{t\in[\varepsilon,T)} S_{2\varepsilon}(\mathcal
        U_t^N)>C_\varepsilon\big\} \leq 2\varepsilon.
      \end{aligned}
    \end{equation}
  \item [(b)] Assume that the family $\{\mathcal U_0^N;\,N\in\N\}$ is
    tight in $\mathbb U$ and Conditions~(i) and~(ii.ii) from
    Proposition~\ref{PP:02}.  Then for all $\varepsilon>0$ we can find
    $C_\varepsilon>0$ such that
    \begin{equation}\begin{aligned}\label{eq:l21b}
        \limsup_{N\to\infty} \mathbf P^N\big\{\sup_{t\in[0,T)}
        \widetilde{S}_{2\varepsilon}(\mathcal
        U_t^N)>C_\varepsilon\big\} \leq 2\varepsilon.
      \end{aligned}
    \end{equation}
  \end{itemize}
\end{lemma}\sm

\begin{proof} (a) The proof relies heavily on the fact that for all
  $t$, $t'$, $\varepsilon$, and $\varepsilon'$ such that
  $[t-\varepsilon,t]\subseteq [t'- \varepsilon', t']$
  \begin{equation}\label{eq:15}
    S_{2\varepsilon}(\mathcal U_t^N) \geq S_{2\varepsilon'}(\mathcal U_{t'}^N).
  \end{equation}

  Fix $\varepsilon>0$. Without loss of generality, we assume that
  $T=k\varepsilon$ for some $k\in\mathbb N$. Since for all
  $t\in[\varepsilon,T)$, the family $\{S_{2\varepsilon}(\mathcal
  U_t^N);\,N\in\N\}$ is tight by assumption, there exists a
  $C_\varepsilon>0$ such that for all $N\in\mathbb N$,
  \begin{equation}
    \label{eq:l21d}
  \sum_{i=2}^{2k-1}\mathbf P^N\big\{ S_{\varepsilon}(\mathcal U_{i\tfrac{\varepsilon}{2}}^N)>
  {C}_\varepsilon\big\}\leq 2\varepsilon.
  \end{equation}

  Applying \eqref{eq:15} therefore yields that for all $N\in\mathbb
  N$,
  \begin{equation}
    \label{eq:P133}
    \begin{aligned}
      \mathbf P^N\big\{\sup_{t\in[\varepsilon,T)}
      S_{2\varepsilon}(\mathcal U_t^N)> {C}_\varepsilon \big\} &\leq
      \sum_{i=2}^{2k-1} \mathbf
      P^N\big\{\sup_{t\in[i\tfrac{\varepsilon}{2},
        (i+1)\tfrac{\varepsilon}{2})}
      S_{2\varepsilon}(\mathcal U_t^N)> {C}_\varepsilon\big\}  \\
      &\leq \sum_{i=2}^{2k-1}\mathbf P^N\big\{S_{\varepsilon}(\mathcal
      U_{i\tfrac{\varepsilon}{2}}^N)> {C}_\varepsilon \big\} \leq
      2\varepsilon
    \end{aligned}
  \end{equation}
  and the assertion follows. \sm

  (b) We extend the notion introduced in \eqref{eq:861b} by setting
  for $\varepsilon>0$ and $0<\zeta<1$,
  \begin{align}
    \label{eq:856}
    \widetilde S_{2\varepsilon, \zeta} (\mathcal U_t^N) & :=
    \inf_{\mathcal J\subseteq \mathcal I_t: \;\; \mu_t^N(\mathcal
      J)\leq \zeta} \#\{A_{t-\varepsilon}(i,t): i\in \mathcal I\setminus
    \mathcal J\}.
  \end{align}
  In particular, $\widetilde S_{2\varepsilon}(\mathcal U_t^N) =
  \widetilde S_{2\varepsilon, 2\varepsilon}(\mathcal U_t^N)$, and thus
  for all $0<\zeta<1$ and $t\in[\varepsilon,T)$, the family
  $\{\widetilde S_{2\varepsilon, \zeta}(\mathcal U_t^N):\, N\in\N\}$
  is tight by Assumption (ii.ii).

  Let $t$, $t'$, $\delta$ and $\delta'$ be such that
  $[t-\delta,t]\subseteq [t'-\delta', t']$. By definition of
  $\widetilde S_{2\delta,\zeta}(\mathcal U_t^N)$, for all $0<\zeta<1$,
  $t<\tau^N$ and $N\in\mathbb N$ there is a ${\mathcal
    A}^N_{t}$-measurable subset $\mathcal J^{N,\zeta,t}\subseteq
  \mathcal I_t^N$ such that $\mu^N_t(\mathcal J^{N,\zeta,t})\leq
  \zeta$ and $\mathcal I^N_t\setminus \mathcal J^{N,\zeta,t}$ can be
  covered by $\widetilde{S}_{2\delta,\zeta}(\mathcal U^N_t)$ balls of
  radius $2\delta$. Moreover, for all $\zeta,\zeta'\in(0,1)$,
  \begin{equation}
    \label{e:ancespart}
    \begin{aligned}
      \big\{\widetilde{S}_{2\delta, \zeta}(\mathcal
      U_t^N)<\widetilde{S}_{2\delta',\zeta'}(\mathcal U_{t'}^N)\big\}
      &\subseteq \big\{\mu^N_{t'}\big(D_{t'}(\mathcal
      J^{N,\zeta,t},t)\big) > \zeta'\big\},
    \end{aligned}
  \end{equation}
  and hence
  \begin{equation}
    \label{e:ancespart1}
    \begin{aligned}%
      &\mathbf{P}^N\big\{\widetilde{S}_{2\delta,\zeta}(\mathcal
      U_t^N)<\sup_{t'\in[t,(t-\delta)+\delta')}\widetilde{S}_{2\delta',\zeta'}(\mathcal
      U_{t'}^N)\big\}
      \\
      &\le \mathbf{P}^N\big\{\sup_{t'\in[t,(t-\delta)+\delta')}
      \mu^N_{t'}\big(D_{t'}(\mathcal J^{N,\zeta,t},t)\big)>
      \zeta'\big\}.
    \end{aligned}
  \end{equation}

  Fix $T>0$ and $\varepsilon>0$. Without loss of generality, we assume
  that $T=k\varepsilon$ for some $k\in\mathbb N$ as well as
  $\tau^N\geq T$. By Condition~(i) of Proposition~\ref{PP:02} applied
  ($2k$ times) with $s:=\tfrac{1}{2}i\varepsilon$, $i=0,...,2k-1$, we
  can choose a $\zeta=\zeta(\varepsilon,T)$ suitably small such that
  for each $i=0,...,2k-1$ and for all $\mathcal A_{i\tfrac\varepsilon
    2}^N$-measurable sets $\mathcal J^{N, \zeta, \tfrac 12
    i\varepsilon }\subseteq \mathcal I_{\tfrac 12 i\varepsilon }^N$
  with $\mu^N_s(\mathcal J^{N, \zeta, \tfrac 1 2 i \varepsilon
  })\leq\zeta$,
  \begin{equation}
    \label{e:001}
    \limsup_{N\to\infty}\mathbf{P}^N
    \big\{\sup_{t\in[\tfrac{1}{2}i\varepsilon,\tfrac{1}{2}(i+1)\varepsilon)}
    \mu^N_{t}(D_{t} (\mathcal J^{N,\zeta,\tfrac{1}{2}i\varepsilon},
    \tfrac{1}{2}i\varepsilon))>\varepsilon\big\}
    \le \tfrac{\varepsilon}{2k}.
  \end{equation}

  Thus, inserting (\ref{e:001}) into (\ref{e:ancespart1}) applied with
  $t=\tfrac 12 i\varepsilon$, $\delta = \tfrac \varepsilon 2$,
  $\delta':=\zeta':=\varepsilon$, and $\zeta$ from \eqref{e:001},
  \begin{equation}
    \label{eq:1112}
    \limsup_{N\to\infty}
    \mathbf P^N\big\{\widetilde{S}_{\varepsilon,\zeta}(\mathcal U_{\tfrac{1}{2}i\varepsilon}^N)<
    \sup_{t\in[\tfrac{1}{2}i\varepsilon,\tfrac{1}{2}(i+1)\varepsilon)}\widetilde{S}_{2\varepsilon,2\varepsilon}
    (\mathcal U_{t}^N)\big\}
    \le\tfrac{\varepsilon}{2k}.
  \end{equation}

  Since for all $\zeta\in(0,1)$, $t\in[\varepsilon,T)$, the family
  $\{\widetilde S_{2\varepsilon,\zeta}(\mathcal U_t^N);\,N\in\N\}$ is
  tight by assumption (ii.ii), and $\{\mathcal U_0^N: N\in\mathbb N\}$
  is assumed to be tight as well, there exists a $C_\varepsilon>0$
  such that for all $N\in\mathbb N$,
  \begin{equation}
    \label{eql21:c}
    \sum_{i=0}^{2k-1}\mathbf P^N\big\{ \widetilde S_{\varepsilon,\zeta}(\mathcal U_{\tfrac 12 i \varepsilon}^N)>
    {C}_\varepsilon\big\}\leq\varepsilon.
  \end{equation}

  Therefore
  \begin{equation}
    \label{eq:P221}
    \begin{aligned}
      \limsup_{N\to\infty} & \mathbf P^N\big\{\sup_{t\in[0,T)}
      \widetilde{S}_{2\varepsilon}(\mathcal U_t^N)>C_\varepsilon\big\}
      \\
      &\leq\limsup_{N\to\infty} \sum_{i=0}^{2k-1}\mathbf
      P^N\big\{\sup_{t\in[\tfrac{1}{2}i\varepsilon,\tfrac{1}{2}(i+1)\varepsilon)}\widetilde{S}_{2\varepsilon}(\mathcal
      U_{t}^N)>C_\varepsilon\big\}
      \\
      &\leq \limsup_{N\to\infty}\sum_{i=0}^{2k-1} \mathbf
      P^N\big\{\widetilde{S}_{\varepsilon,\zeta}(\mathcal
      U_{\tfrac{1}{2} i\varepsilon}^N)<
      \sup_{t\in[\tfrac{1}{2}i\varepsilon,\tfrac{1}{2}(i+1)\varepsilon)}\widetilde{S}_{2\varepsilon}(\mathcal
      U_{t}^N)\big\} \\ & \qquad \qquad \qquad \qquad \qquad \qquad
      +\sum_{i=0}^{2k-1}\mathbf
      P^N\big\{\widetilde{S}_{\varepsilon,\zeta}(\mathcal
      U_{\tfrac{1}{2}i\varepsilon}^N)>C_\varepsilon\big\}
      \\
      &\le 2\varepsilon,
    \end{aligned}
  \end{equation}
  which finally shows the assertion.
\end{proof}\sm

\begin{proof}[Proof of Proposition~\ref{PP:02}]
  Fix $T>0$ and $\delta>0$.

  (a) Since Conditions~(i) and~(ii.i) from Proposition~\ref{PP:02}
  hold, we find for all $n\in\N$ a $C_{\delta 2^{-n}}>0$ such that
  (\ref{eq:l11}) and (\ref{eq:l21}) hold with $\varepsilon=\delta
  2^{-n}$. Put
  \begin{equation}
    \label{eq:T021}
    \Gamma_{1,\delta}
    :=
    \big\{\smallu \in \mathbb U:\,w_\smallu(
    [C_{\delta 2^{-n}},\infty))\leq \delta2^{-n},\; \text{ for all
    }n\in\mathbb N\},
  \end{equation}
  and
  \begin{equation}
    \label{eq:T021b}
    \Gamma_{2,\delta}
    :=
    \big\{\smallu \in \mathbb U_c:\,S_{2\delta 2^{-n}}(\smallu)
    \leq C_{\delta 2^{-n}},\; \text{ for all }  n\in\mathbb N\big\},
  \end{equation}
  where we denote by $S_{2\delta 2^{-n}}(\smallu)$ the number of balls
  of radius $\delta 2^{-n}$ needed to cover $\smallu$. Then
  $\Gamma_{1,\delta}\cap\Gamma_{2,\delta}$ is relatively compact in
  $\mathbb U_c$ by Proposition \ref{noteP:04}.  Moreover, by Lemma
  \ref{l1},
  \begin{equation}
    \label{eq:T022}
    \begin{aligned}
      &\inf_{N\in\mathbb N} \mathbf P^N\big\{\mathcal U_t^N
      \in\Gamma_{1,\delta},\; \text{ for all
      }t\in[0,T\wedge\tau^N)\big\}
      \\
      &\ge 1 -\sum_{n=1}^\infty\sup_{N\in\mathbb N}\mathbf
      P^N\big\{\sup_{t\in[0,T\wedge\tau^N)} w_{\mathcal
        U_t^N}([C_{2^{-n}\delta},\infty))>2^{-n}\delta\big\}
      \\
      &\geq 1 -\sum_{n=1}^\infty 2^{-n}\delta = 1 - \delta.
    \end{aligned}
  \end{equation}

  Similar calculations based on Lemma~\ref{l2} show that
  \begin{equation}
    \label{eq:T023}
    \begin{aligned}
      \inf_{N\in\mathbb N} \mathbf P^N\big\{\mathcal U_t^N \in
      &\Gamma_{2,\delta},\; \text{for all } t\in[\delta, T)\big\}
      \geq 1-2\delta.
    \end{aligned}
  \end{equation}

  Hence
  \begin{equation}
    \label{eq:T024}
    \inf_{N\in\mathbb N} \mathbf P^N(\mathcal U_t^N \in
    \Gamma_{1,\delta} \cap \Gamma_{2,\delta},\;\text{for all }
    \delta \in [t, T\wedge\tau^N)) \geq 1-3\delta,
  \end{equation}
  and \eqref{eq:P02b} follows. \sm

  (b) Assume the conditions (i) and (ii.ii) from
  Proposition~\ref{PP:02}. Then for all $n\in\N$ there is a $C_{\delta
    2^{-n}}>0$ such that (\ref{ww1}) and (\ref{eq:l21b}) hold with
  $\varepsilon=\delta 2^{-n}$.  Put
  \begin{equation}
    \label{Gamma3}
    \Gamma_{3,\delta}
    :=
    \big\{\smallu \in \mathbb U_c:\, \widetilde{S}_{2\delta2^{-n}}(\smallu)\leq C_{\delta 2^{-n}},\; \text{ for all }
    n\in\mathbb N\big\},
  \end{equation}
  where $\widetilde{S}_{2\delta 2^{-n}}(\smallu)$ denotes the number
  of $2\delta 2^{-n}$-balls needed to cover a frequency of $(1-2\delta
  2^{-n})$ of $\smallu$. By
  Proposition~\ref{noteP:05}, $\Gamma_{1,\delta}\cap\Gamma_{3,\delta}$
  is compact in $\mathbb U$.  Moreover, by a similar argument as above
  we find that
  \begin{equation}
    \label{Gamma3est}
    \inf_{N\in\mathbb N}\mathbf P^N\big\{\mathcal U_t^N \in
    \Gamma_{3,\delta},\; \text{ for all } t\in[0,T\wedge\tau^N)\big\} \geq
    1-2\delta,
  \end{equation}
  which gives \eqref{eq:P02c}.
\end{proof}\sm

\subsection{The compact containment condition for Moran models}
\label{sub:cpapp}
The following result is an important step in the proof of tightness of
the family of tree-valued Moran dynamics. Recall the distance
distribution $w_\smallx$ from (\ref{eq:dist}). The next result states
that the family $\{{\mathcal U}^N;\,N\in\N\}$ satisfies all
assumptions from Proposition \ref{PP:02}.

\begin{proposition}[Compact containment]
  Let for each $N\in\N$, $\mathcal U^N=(\mathcal U^N_t)_{t\geq 0}$ be
  the tree-valued Moran dynamics of population size $N$.  Assume that
  the family $(\mathcal U_0^{N})_{N=1,2,...}$ is tight in $\mathcal
  M_1(\mathbb U)$. Then,\label{P:02} the family $\{\mathcal U^N: N
  \in\mathbb N\}$ satisfies the Conditions~(i),~(ii.i) and~(ii.ii)
  from Proposition~\ref{PP:02}.
\end{proposition}

\begin{proof} Fix $T>0$, $\varepsilon\in(0,T)$ and $N\in\N$, and note
  that $\tau^N=\infty$. As for {\em Condition~(i)}, let $s\in[0,T)$
  and consider a ${\mathcal A}^N_{s}$-measurable sequence $(\mathcal
  J^N)_{N\in\N}$ with $\mathcal J^N\subseteq \mathcal I$. Then the
  process $Y^N:=(Y^N_t)_{t\in[s,T)}$, defined for $t\in[s,T)$ as
  $Y^N_t:=\frac{\#D_t(s,\mathcal J^N)}{\#\mathcal I}$, is a
  $\{0,\tfrac{1}{N},...,1\}$-valued birth-death process with
  transitions $y\mapsto y\pm\tfrac{1}{N}$ (each) with rate
  $\tfrac{1}{2}N^2\gamma y(1-y)$. In particular, $Y^N$ is a
  martingale, and therefore the claim follows by Doob's maximum
  inequality. \sm

  To verify {\em Condition~(ii.i), notice that} the family
  $\{S^N_{2\varepsilon}(t);\,N\in\N\}$ is stochastically uniformly
  bounded by $K_\varepsilon$, where $K = (K_t)_{t\geq 0}$ denotes the
  process for the number of lines in a rate $\gamma$ Kingman
  coalescent. In particular, the family
  $\{S^N_\varepsilon(t);\,N\in\N\}$ is tight.

  {\em Condition~(ii.ii)} directly follows from Condition~(ii.i).
\end{proof}

\section{Limit points have continuous paths}
\label{sub:52}
It is well-known that the measure-valued Fleming-Viot process has
continuous paths (e.g., \cite{Dawson1993}). In this section we show
that the same is true for the tree-valued Fleming-Viot dynamics by
controlling the jump sizes in the approximating sequence of Moran
models.

Recall from Definition \ref{def:moran} the tree-valued Moran model
${\mathcal U}^{N}$ of population size $N\in\N$.

\begin{proposition}[Limit points have continuous paths]
  If $\mathcal U^N \TNo\mathcal U$ for some process $\mathcal U$ with
  sample paths in the Skorohod space, ${\mathcal
    D}_{\mathbb{U}}([0,\infty))$, of c\`adl\`ag functions from
  $[0,\infty)$ to $\mathbb{U}$, then $\mathcal U\in{\mathcal
    C}_{\mathbb{U}}([0,\infty))$, almost surely. \label{P:03}
\end{proposition}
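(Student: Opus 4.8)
The plan is to exploit the graphical representation of the approximating tree-valued Moran dynamics: their jumps are bounded by $2/N$ uniformly, and Skorohod convergence together with vanishing jump sizes forces the limit to be continuous. Throughout I would measure distances on $\mathbb{U}$ with the modified Eurandom metric $d'_{\text{Eur}}$ of Definition~\ref{def:eur}, which metrizes the Gromov-weak topology (Remark~\ref{rem:eur}); since only the topology of $\mathbb{U}$ enters the Skorohod topology on $\mathcal D_{\mathbb{U}}([0,\infty))$, this is legitimate.

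\emph{Uniform jump bound.} Fix $N$ and use the representative $\mathcal U^N_t=\overline{(\{1,\dots,N\},r^\eta_t,\tfrac1N\sum_i\delta_i)}$ with $r^\eta_t$ from \eqref{eq:disttwo}. On any open time interval containing no point of $\eta$, the map $t\mapsto A_s(i,t)$ is constant for each fixed $s$, so every $r^\eta_t(i,j)$ is affine in $t$ and hence $t\mapsto\mathcal U^N_t$ is $d'_{\text{Eur}}$-Lipschitz (with constant $\le 2$) there; in particular $\mathcal U^N$ can jump only at the resampling times (a.s.\ finitely many on compacts). At a resampling time $s$ at which $l$ is replaced by a copy of $k$, only the entries involving index $l$ are altered ($r^\eta_s(i,l)=r^\eta_{s-}(i,k)$, and $r^\eta_s(i,j)=r^\eta_{s-}(i,j)$ for $i,j\neq l$). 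Inserting the diagonal coupling $\widetilde\mu=\tfrac1N\sum_i\delta_{(i,i)}$ into \eqref{eq:pqe7},
\[
  d'_{\text{Eur}}(\mathcal U^N_{s-},\mathcal U^N_s)\;\le\;\frac1{N^2}\sum_{1\le i,j\le N}\big(|r^\eta_{s-}(i,j)-r^\eta_s(i,j)|\wedge 1\big)\;\le\;\frac{2(N-1)}{N^2}\;<\;\frac2N .
\]
Hence, $\mathbf P$-almost surely and for every $T>0$, $\;J_T(\mathcal U^N):=\sup_{0<t\le T}d'_{\text{Eur}}(\mathcal U^N_{t-},\mathcal U^N_t)\le 2/N\to0$ as $N\to\infty$.

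\emph{Passing to the limit.} Since $\mathbb{U}$ is Polish, $\mathcal D_{\mathbb{U}}([0,\infty))$ is Polish, and by Skorohod's representation theorem we may realize $\mathcal U^N\to\mathcal U$ almost surely in $\mathcal D_{\mathbb{U}}([0,\infty))$ on one probability space, the bound $J_T(\mathcal U^N)\le 2/N$ still holding a.s. I would then invoke the standard lower semicontinuity of the maximal-jump functional along Skorohod-convergent sequences: if $x_n\to x$ in $\mathcal D_{\mathbb{U}}([0,\infty))$ and $x$ has a jump of size $\delta>0$ at some $t_0$, choose time changes $\lambda_n$ with $\sup_{t\le T}|\lambda_n(t)-t|\to0$ and $\sup_{t\le T}d'_{\text{Eur}}(x_n(\lambda_n(t)),x(t))\to0$ for $T>t_0$; then $x_n\circ\lambda_n\to x$ uniformly on $[0,T]$, so comparing $x_n\circ\lambda_n$ at a time just below $t_0$ with its value at $t_0$ shows $x_n\circ\lambda_n$ has a jump of size $\ge\delta-o(1)$, and as an increasing homeomorphism preserves the set of jump sizes, $J_T(x_n)\ge\delta-o(1)$. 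Applied to $x_n=\mathcal U^N$ this yields $2/N\ge J_T(\mathcal U^N)\ge\delta-o(1)$, impossible for large $N$. Thus $\mathcal U$ has no jumps a.s., i.e.\ $\mathcal U\in\mathcal C_{\mathbb{U}}([0,\infty))$ a.s.; alternatively one may quote the $C$-tightness criteria of \cite{EthierKurtz86}.

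\emph{Main obstacle.} The only substantive point is the jump-semicontinuity step, and the reason it works is that it controls the individual jump \emph{sizes} of $\mathcal U^N$ (each $\le 2/N$) rather than the oscillation of $\mathcal U^N$ over short time intervals: the latter does \emph{not} vanish uniformly in $N$, since there are of order $N^2$ resampling events per unit time. The jump bound itself is elementary from the graphical representation, and the remaining steps are soft once $d'_{\text{Eur}}$ is available.
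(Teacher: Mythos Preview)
Your proof is correct and follows essentially the same route as the paper: bound the jump sizes of $\mathcal U^N$ uniformly by $O(1/N)$ and deduce continuity of the Skorohod limit. The only cosmetic differences are that the paper uses the Gromov--Prohorov metric $d_{\mathrm{GPr}}$ rather than $d'_{\mathrm{Eur}}$ (representing the post-jump state in the pre-jump metric space as a mass shift of $1/N$, giving $d_{\mathrm{GPr}}(\mathcal U^N_{t-},\mathcal U^N_t)\le 2/N$), and that it closes with a direct appeal to Theorem~3.10.2 of \cite{EthierKurtz86} in place of your explicit lower-semicontinuity argument.
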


\begin{proof}
  Recall from Section~\ref{Sub:conv} the construction of the
  tree-valued Moran dynamics $\mathcal U^N = (\mathcal U_t^N)_{t\geq
    0}$, $\mathcal U_t^N = \overline{(\mathcal I, r_t^N, \frac 1N\sum
    \delta_i)}$ with $\mathcal I = \{1,...,N\}$ based on Poisson point
  processes $\{\eta^{i,j};\,1\le i,j\le N\}$. (Compare also with
  Figure~\ref{fig:MM}). In addition, recall the modified Eurandom
  metric from Definition \ref{def:eur}. Note that the tree-valued
  Moran dynamics has paths in ${\mathcal D}_{\mathbb U_c}(\R_+)$,
  almost surely.

  If $\eta^{k,l}\{t\}=0$ for all $k,l\in\mathcal I$, then ${\mathcal
    U}_{t-}^N={\mathcal U}_{t}^N$.  Otherwise, if $\eta^{k,l}\{t\}=1$,
  for some $k,l\in\mathcal I$, then
  \begin{equation}\label{qq3}
    \begin{aligned}
      d_{\text{Eur}}'\big({\mathcal U}_{t-}^N,{\mathcal U}_t^N\big) &
      \leq \frac{1}{N^2} \sum_{i,j}|r_{t-}^N(i,j) - r_t^N(i,j)|\wedge
      1 \\ & = \frac{1}{N^2} \sum_{i=l\text{ or }j=l} |r_{t-}^N(i,j) -
      r_t^N(i,j)|\wedge 1 \\ & \leq \frac 2 N
    \end{aligned}
  \end{equation}
  and therefore
  \begin{equation}\label{qqq6}
    \int_0^\infty\mathrm{d}T\,e^{-T}
    \sup_{t\in[0,T]}d_{\text{Eur}}'\big({\mathcal U}^N_{t-},{\mathcal U}^N_{t}
    \big)
    \leq
    \frac 2N,
  \end{equation}
  for all $T>0$ and almost all sample paths $\mathcal U^N$.  Hence the
  assertion follows by Theorem~3.10.2 in \cite{EthierKurtz86}.
\end{proof}\sm

\section{Proofs of the main results
  (Theorems~\ref{T:01},~\ref{T:02},~\ref{T:03})}
\label{S:proofs}
In this section we give the proof of the main results stated in
Section~\ref{S:results}. Theorems~\ref{T:01} and~\ref{T:02} are proved
simultaneously.

\begin{proof}[Proof of Theorems~\ref{T:01} and~\ref{T:02}]
  Recall, for each $N\in\N$, the state-space $\mathbb{U}_N$, and the
  $\mathbb{U}_N$-valued Moran dynamics, ${\mathcal U}^N=({\mathcal
    U}^N_t)_{t\ge 0}$, from (\ref{e:MoranN}) and Definition
  \ref{def:moran}, respectively. Let $\mathbf P_0\in\mathcal
  M_1(\mathbb U)$ be the distribution of $\mathcal U_0$ and $\mathbf
  P_0^N\in\mathcal M_1(\mathbb U_N)$ be the distribution of $\mathcal
  U_0^N$ such that $\mathcal U_0^N\Longrightarrow \mathcal U_0$ as
  $N\to\infty$.

  By Proposition~\ref{P:06}, the
  $(\mathbf{P}_0^N,\Omega^{\uparrow,N},\Pi_N^1)$-martingale problem is
  well-posed, and is solved by ${\mathcal U}^N$.
  Proposition~\ref{P:01} implies with a standard argument (see, for
  example, Lemma~4.5.1 in~\cite{EthierKurtz86}) that if ${\mathcal
    U}^N\Rightarrow{\mathcal U}$, for some ${\mathcal U}\in{\mathcal
    D}_{\mathbb{U}}([0,\infty))$, as $N\to\infty$, then ${\mathcal U}$
  solves the $(\mathbf{P}_0,\Omega^{\uparrow},\Pi^1)$-martingale
  problem. Hence for {\em existence} we need to show that the sequence
  $\{{\mathcal U}^N;\,N\in\N\}$ is tight, or equivalently by
  Remark~\ref{rem:05} combined with Remark~4.5.2 in
  \cite{EthierKurtz86} that the compact containment condition in
  $\mathbb U$ holds. However, the latter follows directly from
  Propositions \ref{P:02} and \ref{PP:02}.
  \sm

    By standard theory (see, for example, Theorem~4.4.2 in
  \cite{EthierKurtz86}), {\em uniqueness} of the
  $(\mathbf{P}_0,\Omega^\uparrow,\Pi^1)$-martingale problem follows
  from uniqueness of the one-dimensional distributions of solutions of
  the $(\mathbf{P}_0, \Omega^\uparrow, \Pi^1)$-martingale problem. The
  latter can be verified using the duality of the tree-valued
  Fleming-Viot dynamics to the tree-valued Kingman coalescent,
  ${\mathcal K}:=({\mathcal K}_t)_{t\ge 0}$, as defined in
  (\ref{grevy}).  That is, if ${\mathcal U}=({\mathcal U}_t)_{t\ge 0}$
  is a solution of the $(\mathbf
  P_0,\Omega^\uparrow,\Pi^1)$-martingale problem, then
  \eqref{eq:dualRel} holds for all $\kappa\in\mathbb{K}$, $t\ge 0$ and
  $H\in\mathcal H$. Since $\mathcal H$ is separating in $\mathcal
  M_1(\mathbb U)$ by Proposition~\ref{P:dual}(i), uniqueness of the
  one-dimensional distributions follows.
\sm

\sloppy So far we have shown that the
$(\mathbf{P}_0,\Omega^\uparrow,\Pi_1)$-martingale problem is
well-posed and its solution arises as the weak limit of the solutions
of the $(\mathbf P^N_0, \Omega^{\uparrow,N},\Pi_N^1)$-martingale
problems. In particular, the tree-valued Moran dynamics converge to
the tree-valued Fleming-Viot dynamics. Hence we have shown
Theorem~\ref{T:01} and Theorem~\ref{T:02}.
\end{proof}\sm

\begin{proof}[Proof of Proposition~\ref{P:07}]
  (i), (ii) The tree-valued Fleming-Viot dynamics is the weak limit of
  tree-valued Moran dynamics. Hence, Propositions~\ref{P:02} and
  \ref{PP:02} imply that the tree-valued Fleming-Viot dynamics have
  values in the space of compact ultra-metric measure spaces for each
  $t>0$, almost surely. In addition, the tree-valued Fleming-Viot
  dynamics has continuous paths by Proposition~\ref{P:03}, almost
  surely.
\end{proof}\sm

\begin{proof}[Proof of Proposition~\ref{P:05}] Note that the strong
  Markov property follows from the Feller property, \cite[Theorem
  4.2.7]{EthierKurtz86}.  (By completeness, we can assume the
  filtration generated by the tree-valued Fleming-Viot dynamics is
  right-continuous, as needed in this Theorem.) Let $\mathcal
  U^{\smallu}=(\mathcal U_t^\smallu)_{t\geq 0}$ be the solution of the
  $(\delta_\smallu, \Omega^\uparrow, \Pi^1)$-martingale problem,
  i.e.\, the tree-valued Fleming-Viot dynamics, started in $\mathcal
  U_0 = \smallu$. For the Feller property, it suffices to show that
  $\smallu'\to\smallu$ implies that $\mathcal U^{\smallu'}_t
  \Longrightarrow \mathcal U^\smallu_t$ for all $\smallu\in\mathbb U$
  and $t>0$.  Recall the coupled tree-valued Moran dynamics from
  Section \ref{sec:coup}. For $\smallu,\smallu'\in\mathbb U$, take
  $\smallu_N, \smallu_N'\in\mathbb U_N$ with $\smallu_N\to\smallu$ and
  $\smallu_N'\to\smallu'$ in the Gromov-weak topology. Let $(\mathcal
  U_t^{N,1},\mathcal U_t^{N,2})_{t\geq 0}$ be the coupled tree-valued
  Moran dynamics, started in $(\smallu_N, \smallu_N')$. Since
  $\{(\mathcal U_t^{N,k})_{t\geq 0}, N\in\mathbb N\}$ is tight in
  $\mathbb{U}$ by Theorem \ref{T:02}, $k=1,2$, $\{(\mathcal U_t^{N,1},
  \mathcal U_t^{N,2})_{t\geq 0}: N\in\mathbb N\}$ is tight in $\mathbb
  U\times\mathbb{U}$. Let $(\mathcal U_t^{\smallu}, \mathcal
  U_t^{\smallu'})_{t\geq 0}$ be a weak limit point which must be a
  coupling of tree-valued Fleming-Viot dynamics by
  construction. Moreover, since the modified Eurandom metric (see
  Definition \ref{def:eur}) is continuous in the Gromov-weak topology
  and bounded
  \begin{equation}
    \label{eq:njio}
    \begin{aligned}
      \mathbf E[d_{\text{Eur}}'(\mathcal U_t^\smallu, \mathcal
      U_t^{\smallu'})] & = \lim_{N\to\infty} \mathbf
      E[d_{\text{Eur}}'(\mathcal U_t^{N,1}, \mathcal U_t^{N,2})] \\ &
      \leq \lim_{N\to\infty} d_{\text{Eur}}'(\smallu_N, \smallu_N')\\ &=
      d_{\text{Eur}}'(\smallu, \smallu')
    \end{aligned}
  \end{equation}
  by Proposition \ref{P:tbc}. In particular, $\smallu'_n\to\smallu$, as $n\to\infty$,
  implies that $$\mathbf E[d_{\text{Eur}}'(\mathcal U_t^\smallu,
  \mathcal U_t^{\smallu'_n})]\tno 0,$$ which in turn implies $\mathcal
  U_t^{\smallu'_n}\Longrightarrow \mathcal U_t^\smallu$, as $n\to\infty$, by Remark~\ref{rem:eur}.
\end{proof}\sm

\begin{proof}[Proof of Corollary \ref{cor:qv2}]
  For $\Phi=\Phi^{n,\phi}$ as in the Corollary, observe that $\langle
  \nu^\smallu, \phi\rangle^2 = \langle \nu^\smallu,
  (\phi,\phi)_n\rangle$ with $(\phi,\phi)_n$ from \eqref{eq:qvprep1}.
  Therefore, given $\mathcal U_t=\smallu$, we compute (compare with
  \cite[Proof of Theorem~1.1]{FukushimaStroock1986})
  \begin{equation}
    \label{eq:qv3}
    \begin{aligned}
      \tfrac{\mathrm d\langle \Phi(\mathcal U)\rangle_t}{\mathrm{d}t}
      &=
      \Omega^\uparrow\Phi^2(\smallu)-2\Phi(\smallu)\Omega^\uparrow\Phi(\smallu)
      \\
      &= \langle\nu^\smallu,\langle\nabla(\phi,\phi)_n,
      \underline{\underline 2}\rangle-2 (\phi, \langle\nabla\phi,
      \underline{\underline 2}\rangle)_n\rangle \\ & \qquad +
      \frac\gamma 2 \sum_{k,l=1}^n \langle \nu^\smallu, (\phi\circ
      \theta_{k,l}, \phi)_n + (\phi, \phi\circ \theta_{k,l})_n -
      2(\phi,\phi\circ \theta_{k,l})_n\rangle \\ & \qquad +
      \gamma\sum_{k,l=1}^n \big( \langle \nu^\smallu,
      (\phi,\phi)_n\circ \theta_{k,n+l}\rangle - \langle\nu^\smallu,
      (\phi,\phi)_n\rangle\big)
    \end{aligned}
  \end{equation}
  and the result follows from the first two terms vanishing and
  \begin{equation}
    \label{eq:qv4}
    \begin{aligned}
      \sum_{k,l=1}^n \langle \nu^\smallu,
      (\phi,\phi)_n\circ\theta_{k,n+l}\rangle & = \sum_{k,l=1}^n
      \langle \nu^\smallu,
      (\bar\phi,\bar\phi)_n\circ\theta_{k,n+l}\rangle
      \\
      &= n^2 \langle \nu^\smallu,
      (\bar\phi,\bar\phi)_n\circ\theta_{1,n+1}\rangle
    \end{aligned}
  \end{equation}
  with the symmetrization $\bar\phi$ introduced in Remark
  \ref{Rem:13}(iii) and $\Phi^{n,\phi} = \Phi^{n,\bar
    \phi}$.
\end{proof}\sm

\begin{proof}[Proof of Theorem \ref{T:03}]
  In order to prove Theorem \ref{T:03} we need two ingredients:
  \begin{itemize}
  \item The family $\{\mathcal U_t;\, t>1\}$ is tight.
  \item $\mathbf{E}^{\delta_\smallu}[\Phi(\mathcal U_{t})] \to
    \mathbf E[\Phi(\mathcal U_\infty)]$, as $t\to\infty$, for all
    $\Phi\in\Pi^1$ and $\smallu\in\mathbb U$.
  \end{itemize}
  Then, Theorem \ref{T:03} follows from Lemma~3.4.3 together with Theorem~3.4.5 of
  \cite{EthierKurtz86}.

    We show tightness of $\{\mathcal U_t;\, t>1\}$ in $\mathbb U$
  using Theorem~3 and (3.3) of \cite{GrePfaWin2006a}. First, recalling
  \eqref{eq:dist} and when $\mathbf E[w_{\mathcal U_t}]$ is the first
  moment measure of $w_{\mathcal U_t}\in\mathcal M_1(\mathcal
  M_1(\mathbb R_+))$, for $t$ and $C>0$,
  \begin{align}
    \mathbf E[w_{\mathcal U_t}] ([C,\infty)) = \begin{cases}
      e^{-\gamma t} \mathbf E[w_{\mathcal U_0}]([C-t,\infty)), & C\geq
      t \\ e^{-\gamma C}, & C < t. \end{cases}
  \end{align}
  Indeed, by exchangeability $\mathbf{E}[w_{{\mathcal
      U}_t}]([C,\infty))$ equals the probability that a ``typical''
  pair of individuals drawn from the population at time $t$ has
  distance at least $C$, if $t\le C$ then this event equals the event
  that their ancestral lines do not coalesce in the time window
  $[0,t]$ and that the distance of their ancestors at time $0$ is at
  least $C$. This event has probability $e^{-\gamma t}$ (no
  coalescence for at least time $t$) times $\mathbf{E}[w_{{\mathcal
      U}_0}]([C-2t,\infty))$. If $t>C$ then the distance between a
  ``typical'' pair of individuals to be at least $C$ is equivalent to
  that their ancestral lines do not coalesce in the time window
  $[0,C]$ which has probability $e^{-\gamma C}$.

  So, for given $\varepsilon>0$, choose $C>0$ large enough such that
  $\mathbf E[w_{\mathcal U_0}]([C,\infty))<\varepsilon$ and
  $e^{-\gamma C}<\varepsilon$. Then, $ \mathbf E[w_{\mathcal U_t}]
  ([2C,\infty)) < \varepsilon$ for all $t>0$ and so, $\{\mathbf
  E[w_{\mathcal U_t}], t>1\}$ is tight.

  Secondly, for $\mathcal U_t = \overline{(U_t,r_t,\mu_t)}$, we have to
  show that for $0<\varepsilon<1$ there is $\delta>0$ with
  \begin{align}
    \sup_{t>1} \mathbf E[\mu_t\{u: \mu_t(B_\varepsilon(u))\leq
    \delta\}]<\varepsilon.
  \end{align}

 Note that the expectation on the left hand side does not depend on $t$.
 Using that $\mathcal U_\infty$ is determined by $\Lambda =
  \gamma\cdot \delta_0$ in (4.7) of \cite{GrePfaWin2006a} we find
  \begin{align}
    \lim_{\delta\to 0}\sup_{t>1} \mathbf E[\mu_t\{u: \mu_t(B_\varepsilon(x))\leq
    \delta\}] & = \lim_{\delta\to 0} \mathbf E[\mu_\infty\{u:
    \mu_\infty(B_\varepsilon(x))\leq \delta\}] = 0
  \end{align}
  by (4.9) and (4.11) of \cite{GrePfaWin2006a}. So, tightness
  follows.

  The fact that the Kingman tree is a unique equilibrium distribution
  is an application of the duality relation from Proposition
  \ref{P:dual}. Fix $\phi\in{\mathcal C}_b^1(\R_+^{\N\choose 2})$. We
  apply the duality relation (\ref{eq:dualRel}) between the
  tree-valued Fleming-Viot dynamics and the tree-valued Kingman
  coalescent which starts in
  $\smallk_0=(\smallp_0,\underline{\underline{r}}'_0)$ with
  $\smallp_0:=\{\{n\},\,n\in\N\}$ and
  $\underline{\underline{r_0}}'\equiv 0$. By construction of the dual
  process $\mathcal K$, $\mathbf
  E^{\delta_{\smallk_0}}[\phi(\underline{\underline{r}}_t)] \to
  \mathbf E[\langle \nu^{\mathcal U_\infty},\phi\rangle ]$ {and
    ${\mathcal P}_t\to\{\N\}$,} as $t\to\infty$ where $\mathcal
  U_\infty$ is the (rate $\gamma$) Kingman measure tree {from
    (\ref{ag2b})}. Hence, {by (\ref{eq:dualRel}),}
  \begin{equation}\label{e:momcon}
    \begin{aligned}
      \lim_{t\to\infty}\mathbf{E}^{\delta_\smallu}\big[\langle\nu^{\mathcal
        U_{t}}, \phi\rangle \big] &={
        \lim_{t\to\infty}\mathbf{E}^{\delta_{\smallk_0}}\big[\int_{\R_+^{\N\choose
            2}}\nu^{\smallu}(\mathrm{d}\underline{\underline{r}})\,
        \phi\big((\underline{\underline{r}})^{{\mathcal
            P}_t}+\underline{\underline{r_t}}'\big) \big]}
      \\
      &= \lim_{t\to\infty}\mathbf{E}^{\delta_{\smallk_0}} \big[
      \phi\big(\underline{\underline{r}}'_t\big)\big] \\&= \mathbf{E}
      \big[\langle \nu^{\mathcal U_{\infty}}, \phi\rangle \big].
    \end{aligned}
  \end{equation}
  Since $\phi\in{\mathcal C}_b^1(\R_+^{\N\choose 2})$ was chosen
  arbitrarily, (ii) follows and we are done.
\end{proof}\sm

\section[Proof of Theorems~\ref{T:04} and~\ref{T:05}]{Proof of the
  applications (Proof of Theorems~\ref{T:04} and~\ref{T:05})}
\label{S.proofapp}
In this section we prove the results stated in Section~\ref{S:kurtz}.

\begin{proof}[Proof of Lemma~\ref{L:02}] Consider the \emph{traveling
    salesperson problem} for a salesperson who must visit all
  $x_1,...,x_n$ and who starts at one $x_i$ to which she comes back at
  the end of the trip. It is easy to see that such a path must pass
  all edges of the subtree spanned by $x_1,...,x_n$ in both
  directions, so the length of the path is at least twice the tree
  length. It is also easy to see that taking an optimal path and
  leaving out $x_i$ gives an optimal path for the remaining leaves
  $x_1,...,x_{i-1}, x_{i+1},...,x_n$.

  We claim that there is one path connecting the set of leaves such
  that each edge in the tree is passed exactly twice, which is
  equivalent to the assertion of the Lemma. Assume to the contrary
  that such an order does not exist. We take a path of minimal
  length. There must be one edge which is visited at least four
  times. W.l.o.g.\ we assume that this edge is internal, i.e.\ not
  adjacent to any $x_i$. So there are four points $x_i,x_j,x_k,x_l\in
  X$, visited in the order $x_i, x_j, x_k, x_l, x_i$, such that
  $[x_i,x_j]\cap [x_k, x_l]$ is visited at least four times, where
  $[x, y]$ is the path from $x$ to $y$ in $X$. Since leaving out
  leaves gives again an optimal path, leaving out all leaves except
  $x_i, x_j, x_k, x_l$ must lead to an optimal path connecting these
  four points. However, this optimal path must be $x_i, x_j, x_l, x_k,
  x_i$ (or its reverse), since this path passes all edges only
  twice. Hence, we have a contradiction and the assertion is proved.
\end{proof}\sm

\begin{proof}[Proof of Theorem~\ref{T:04}]
  We first show {\em injectivity} of $\xi$. Assume we are given a
  compact ultra-metric measure space $(U_0,r_0,\mu_0)$ and its
  equivalence class $\smallu_0 = \overline{(U_0,r_0,\mu_0)}$. We show
  that if $\lambda := \xi(\smallu_0)$, then
  $\xi^{-1}(\{\lambda\})=\{\smallu_0\}$. We do this by explicitly
  reconstructing $\smallu_0$ from $\lambda$.

  We proceed in three steps.  In the first two steps we consider the
  case where $\mu_0$ is supported by finitely many atoms.  In Step~1
  we follow an argument provided to us by Steve Evans which explains
  how to recover the isometry class of $(\mathrm{supp}(\mu_0),r_0)$
  from $\lambda$. In Step~2 we then recover the measure
  $\mu_0$. Finally, the case of a general element in $\mathbb{U}$ is
  obtained by approximation via finite ultra-metric measure spaces in
  Step~3.

  \subsubsection*{Step~1 (Evans's reconstruction procedure for finite
    trees)} Assume that $\smallu\in\xi^{-1}(\{\lambda\})$ and that
  $\smallu=\overline{(U,r,\mu)}$ with $\#\mathrm{supp}(\mu)<\infty$.
  Put
  \begin{equation}\label{eq:qqq12a}
    A_N
    :=
    \big\{({{l}}_1:=0,{{l}}_2,...):\,{{l}}_k>{{l}}_{k-1} \text{ for exactly }N-1\text{ different }k\big\}.
  \end{equation}
  First observe that $\#\mathrm{supp}(\mu)=N$ if and only if $\lambda$
  is supported on $A_N$. That is, we can recover
  $\#\mathrm{supp}(\mu)$ from $\lambda$.  So, assume that $\mu$ has
  $N$ atoms and w.l.o.g.\ $U:=\{1,...,N\}$. We now recover
  $\underline{\underline{r}}=(r_{i,j})_{1\le i<j\le N}$ from
  $\lambda$.

  For that purpose, introduce on $\mathbb R_+^\N$ the lexicographic
  ordering $\prec$, i.e., $\underline {{l}} \prec \underline {{l}}'$
  iff for $k^\ast := \min\{k: {{l}}_k\neq {{l}}'_k\}$ we have
  ${{l}}_{k^\ast} < {{l}}'_{k^\ast}$. Let
  \begin{align}\label{B}
    B
 :=
    \big\{\underline{{l}}\in\text{supp}(\lambda):\,{{l}}_1<...<{{l}}_N\big\}
  \end{align}
be the space of all vectors $\underline{{{l}}}$ which are
accessible by sequentially sampling the $N$ different points of $U$
and evaluating subsequently the lengths of the sub-trees spanned by
them.  Moreover, let
\begin{equation}
\label{e:last}
   \underline{{{l}}}^\ast:=\mathrm{min}_{\prec}B,
\end{equation}
i.e., $\underline{{{l}}}^\ast:=({{l}}^\ast_k)_{k\in\N}$ is the minimal
element in $B$ with respect to the order relation $\prec$.

W.l.o.g.\ we assume that $U=\{1,...,N\}$ and that for all
$n\in\{1,...,N\}$,
  \begin{equation}\label{enum}
    {{l}}_n^\ast
    :=
    L_n^{(U,r)}(\{1,...,n\}).
  \end{equation}
  Notice that if $d_n^\ast$ denotes the depth of the sub-tree spanned
  by $\{1,...,n\}$, i.e., $d_n^\ast := \tfrac 12 \max\{r(i,j); 1\leq
  i,j\leq n\}$, for $n\in\N$, then $d_1^\ast=0$ and the recursion
  \begin{equation}
    \label{eq:qqq10}
    \begin{aligned}
      d_n^\ast = \frac
      12\big(d_{n-1}^\ast+({{l}}_n^\ast-{{l}}_{n-1}^\ast)\vee
      d_{n-1}^\ast\big).
    \end{aligned}
  \end{equation}
  holds for $n\geq 2$.

  We claim that we can even recover
  $(r_{i,j})_{1\leq i<j\leq N}$ from $({{l}}_n^\ast)_{n=1,...,N}$.
  In fact, for all $n\in\N$,
  \begin{equation}\label{weave} r_{n-1,n} = \min_{1\le k\leq n-1}
    r_{k,n},
  \end{equation}
  To see this, assume
  to the contrary that there is a minimal $n\in\N$ for which we find a $k<n-1$
  such that $r_{k,n}$ is minimal and $r_{k,n} < r_{n-1,n}$. Choose the
  minimal $i$ with $k<i\leq n-1$ and $r_{k,n} < r_{i,n}$. Then,
  sampling the $i$ points $1,2,...,k,...,i-1,n$ (in that order) leads to the sequence of tree lengths
  ${{l}}_1^\ast, {{l}}_2^\ast, ..., {{l}}_{i-1}^\ast, {{l}}_{i-1}^\ast +
  \tfrac 12 r_{k,n}$. However, by the minimality of $i$ we have that
  $r_{k,n}\geq r_{i-1,n}$ and by the ultra-metric property $r_{k,n} <
  r_{i,n}\vee r_{i-1,n} = r_{i-1,i}$. Hence, the above tree lengths
  are smaller (with respect to $\prec$) than ${{l}}_1^\ast,
  {{l}}_2^\ast, ..., {{l}}_{i-1}^\ast, {{l}}_{i}^\ast$ since
  ${{l}}_{i}^\ast \geq {{l}}_{i-1}^\ast + \tfrac 12 r_{i-1,i}$. So,
  assuming that \eqref{weave} does not hold contradicts the assumption
  that ${{l}}^\ast$ is minimal.

However, from (\ref{weave}) we conclude the following recursion: for
all $n\in\{2,...,N\}$ and $1\le k\le n-1$,
  \begin{equation}
    \label{eq:qqq12}
    \begin{aligned}
      r_{k,n} &= r_{k,n-1}\vee
      2\big({{l}}_n^\ast-{{l}}_{n-1}^\ast-(d_n^\ast-d_{n-1}^\ast)\big).
    \end{aligned}
  \end{equation}
  The latter together with the necessary requirements that
  $r_{n,n}:=0$ and $r_{1,2}:=\tfrac{1}{2}{{l}}^\ast_2$ determines the
  metric on $U$ uniquely.

  \subsubsection*{Step~2 (Reconstruction of weights in finite trees)}   In this step we reconstruct
  weights $(p_1,...,p_N)$ on $(\{1,...,N\},r)$ from the given
  $\lambda$. Denote by $\Gamma\subseteq\Sigma_N$ the set of permutations
  of $\{1,...,N\}$ for which the metric $r$ given in Step~1 satisfies
  $r_{i,j}=r_{\sigma(i), \sigma(j)}$, for all $1\leq i,j\leq N$.
  Since we are interested in measure-preserving isometry classes only,
  we need to show that $(p_1,...,p_N)$ are uniquely determined up to
  permutations $\sigma\in\Gamma$.

  For all $\underline
  k=(k_1,...,k_{N-1},k_N)\in\{0,1,...\}^{N-1}\times\{\infty\}$, define
  \begin{align}\label{eq:qqq14}
    \underline {{l}}^\ast_{\underline k} := \big({{l}}_1^\ast=0,
    \underbrace{{{l}}^\ast_1,...,{{l}}^\ast_1}_{k_1-\text{times}},{{l}}_2^\ast,
    \underbrace{{{l}}^\ast_2,...,{{l}}^\ast_2}_{k_2-\text{times}},{{l}}_3^\ast,
    \underbrace{{{l}}^\ast_3,...,{{l}}^\ast_3}_{k_3-\text{times}},...\big)
  \end{align}
  where $\underline {{l}}^\ast$ is the minimal subtree length vector
  in the support of $\lambda$ from Step~1. Observe that sampling from
  the subtree length distribution first the point~$1$ a number of
  $k_1+1$ times, then the point~$2$, then one of the points in
  $\{1,2\}$ a number of $k_2$ times, and so on, results exactly in the
  vector $\underline {{l}}^\ast_{\underline k}$. Hence, taking all
  possible permutations $\sigma\in\Gamma$ into account, and since
  $\lambda(\{\underline{{l}}^\ast\}) = |\Gamma| \cdot \prod_{i=1}^N
  p_i$,
  \begin{equation}\label{eq:qqq15}
    \begin{aligned}
      \lambda(\{\underline{{l}}^\ast_{\underline k}\}) &=
      \big(\prod\nolimits_{i=1}^Np_i\big)\cdot\sum_{\sigma\in\Gamma}\prod_{i=1}^{N-1}
      \Big( \sum_{1\le j\leq i} p_{\sigma(j)}\Big)^{k_i}
      \\
      &=\frac{1}{|\Gamma|}
      \lambda(\{\underline{{l}}^\ast\})\cdot\sum_{\sigma\in\Gamma}\prod_{i=1}^{N-1}
      \Big( \sum_{1\le j\leq i} p_{\sigma(j)}\Big)^{k_i}.
    \end{aligned}
  \end{equation}
  We claim that (\ref{eq:qqq15}) determines $(p_1,...,p_N)$ uniquely
  up to permutations $\sigma\in\Gamma$.

  To see this, observe that the algebra of functions on the
  $N-1$-dimensional simplex $S_N$, generated by the functions
  \begin{align}\label{eq:qqq16}
    \Big\{f((p_1,...,p_{N})):= \prod_{i=1}^{N-1}\big(\sum_{1\le
      j\leq i}p_{j}\big)^{k_i};\;k_1,...,k_{N-1}\in\N_0\Big\}
  \end{align}
  separates points. Hence, $f\in\mathcal C_b(S_N)$ can be
  approximated uniformly by functions in this algebra by the
  Stone-Weierstrass Theorem. Hence, by knowing
  $\lambda(\{\underline{{l}}_{\underline k}^\ast\})/\lambda(\{\underline
  {{l}}^\ast\})$ for all $\underline k$, using \eqref{eq:qqq15}, we
  also know the values of
  \begin{align}
    \frac{1}{|\Gamma|}\sum_{\sigma\in\Gamma} f((p_{\sigma(1)}, ...,
    p_{\sigma(n)})
  \end{align}
  by an approximation argument.

    In particular, we can find
  the set $A:=\{(p_{\sigma(1)}, ..., p_{\sigma(N)}):
  \sigma\in\Gamma\}$.  By setting $\mu\{i\}=p_i$ for an arbitrary
  $(p_1,...,p_N)\in A$ we have recovered $\mu$ uniquely up to
  isometries such that $\xi^{-1}(\{\lambda\})=\{\smallu\}$ by
  construction.

  \subsubsection*{Step~3 (General ultra-metric measure spaces)}
  Let $\smallu=\overline{(U,r,\mu)}\in\xi^{-1}(\{\smallu_0\})$ not necessarily finite anymore.
  We shall approximate $\smallu$ by finite ultra-metric measure spaces
  which we then treat as described in the first two steps.

 For that purpose, let for all $\varepsilon>0$,
 the ($\varepsilon$-shrunken) pseudo-metric $r_\varepsilon$ on $U$ by
 putting
 \begin{equation}\label{e:epss}
   r_\varepsilon
 :=
   0\vee (r - \varepsilon).
 \end{equation}
 Notice that since $\overline{(\mathrm{supp}(\mu),r)}$ is
 ultra-pseudo-metric, $\overline{(\mathrm{supp}(\mu),r_\varepsilon)}$ is ultra-pseudo-metric as well, for all $\varepsilon$.

 Moreover, for all $\varepsilon>0$ there is a covering of $U$
 of disjunct balls $B_1, B_2,...\subseteq U$ of radius
 $\varepsilon$ with $\mu(B_1)\geq \mu(B_2)\geq \ldots$. Take
 $N_\varepsilon$ large enough such that for
 $B_\varepsilon=\bigcup_{i=1}^{N_\varepsilon} B_i$ we have
 $\mu(B_\varepsilon)>1-\varepsilon$. Set
 $\mu_\varepsilon(\cdot):=\mu(\cdot|B_\varepsilon)$ and
  \begin{align}\label{eq:gpwlt1}
    \smallu^\varepsilon := \overline{(U, r_\varepsilon,
      \mu_\varepsilon)}.
  \end{align}
  Then $\smallu^\varepsilon$ is a finite
  metric measure space and $\smallu^\varepsilon \to \smallu$ in
  the Gromov-weak topology, as $\varepsilon\to 0$.

   Given $u_1,u_2,...\in U$, set
    ${{l}}_n:=L_n^{(U,r)}(\{u_1,...,u_n\})$ leading to the subtree
    length vector $({{l}}_1, {{l}}_2,...)  \in\R_+^\N$. We define the
    map $\underline{\ell}^\varepsilon:\,\R_+^\N\to\R_+^\N$ given by
    \begin{equation}\label{vareps}
      \underline{\ell}^\varepsilon:\, ({{l}}_1, {{l}}_2,...)  \mapsto
      ({{l}}_1^\varepsilon, {{l}}_2^\varepsilon, ...)
    \end{equation}
    with ${{l}}^{\varepsilon}_1=0$ and ${{l}}^{\varepsilon}_2=0\vee
    ({{l}}_2-\varepsilon)$ and for $n\ge 3$, recursively,
    \begin{equation}\label{nextrec}
      {{l}}^\varepsilon_n :=
      {{l}}^\varepsilon_{n-1}+\big({{l}}_n-{{l}}_{n-1}-\tfrac{1}{2}\varepsilon\big)^+.
    \end{equation}

Moreover, set
\begin{align}\label{Aeps}
  A_{\varepsilon,n}:=\{({{l}}_1=0, {{l}}_2,...): {{l}}_i>{{l}}_{i-1}
  \text{ for exactly $N_\varepsilon-1$ different
    $i\in\{1,2,...,n\}$\}},
\end{align}
and we observe that
\begin{equation}\label{eq:qqq20a}
\begin{aligned}
  \xi(\smallu^\varepsilon) (\cdot) &=
  (\underline{\ell})_\ast\nu^{\smallu^{\varepsilon}}
  = \lim_{n\to\infty}(\underline{\ell}^\varepsilon_\ast
  \lambda)\big(\cdot|A_{\varepsilon,n}).
\end{aligned}
\end{equation}
Now, take $\smallu, \widetilde\smallu\in\xi^{-1}(\{\lambda\})$.
Observe that $\widetilde\smallu^\varepsilon\to\widetilde\smallu$ and
$\smallu^\varepsilon\to\smallu$ in the Gromov-weak topology, as
$\varepsilon>0$. Hence we are in a position to apply Steps~1 and~2 to
find that $\widetilde\smallu^\varepsilon \in \xi^{-1}
(\lim_{n\to\infty}\underline
\ell_\ast^\varepsilon\lambda(\cdot|A_{\varepsilon,n})) =
\{\smallu^\varepsilon\}$, for all $\varepsilon>0$.  This shows that
$\smallu = \lim_{\varepsilon\to 0}\smallu^\varepsilon =
\lim_{\varepsilon\to
  0}\widetilde\smallu^\varepsilon=\widetilde\smallu$. \sm

As for {\em continuity of $\xi$},  assume that
$(\smallu_k)_{k\in\N}$ is a sequence in $\mathbb{U}$ such that
$\smallu_k\to\smallu$, for some $\smallu\in\mathbb{U}$, in the
Gromov-weak topology, as $k\to\infty$.  Then by definition,
$\Phi(\smallu_k)\to\Phi(\smallu)$, for all $\Phi\in\Pi^0$, as
$k\to\infty$. In particular, since the map $\underline{\underline r}\mapsto
\ell_n(\underline{\underline r})$ is continuous as it is the minimum of finitely many
continuous functions, for all $n\in\N$,
$\langle\xi(\smallu_k),\psi\rangle\to\langle\xi(\smallu),\psi\rangle$,
for all $\psi\in{{\mathcal C}_b}(\R_+^\N)$, or equivalently,
$\xi(\smallu_k)\Rightarrow\xi(\smallu)$ in the weak topology on
${\mathcal M}_1(\R_+^\N)$, as $k\to\infty$.

In order to show \emph{continuity of $\xi^{-1}$}, we take $\lambda$,
$\lambda_1$, $\lambda_2$,... in $\xi(\mathbb{U})$
such that $\lambda_m\Rightarrow\lambda$, as $m\to\infty$.
We have to show that $\smallu_m:= \xi^{-1}(\lambda_m) \to
\xi^{-1}(\lambda) =: \smallu$ in the Gromov-weak topology, as
$m\to\infty$. For this, we need to show that
the three steps in the
proof of injectivity of $\xi$ hold under weak limits.

For Step~1 and~2, assume that $\smallu$ is finite with
$\#\mathrm{supp}(\mu)=N$. Then the same holds for all large $m\in\N$.
Define for all $m\in\N$ (based on $\smallu^m$) sets
$B^m\subseteq\R_+^\N$, minimal elements $\underline{l}^{\ast,m}\in
B^m$, $(d_n^{\ast,m})_{n\ge 1}$ and $\underline{\underline{r}}^{m}$ as
in (\ref{B}), (\ref{e:last}), (\ref{eq:qqq10}) and (\ref{eq:qqq12}),
respectively.  Then we can clearly recover that the mutual distances
$\underline{\underline{r}}$ in $\smallu$ as the limit of
$\underline{\underline{r}}^{m}$, as $m\to\infty$.  Moreover, note that
the set of functions \eqref{eq:qqq16} is not only separating, but also
convergence determining. Hence since all metric measure spaces are
finite, we find that $\smallu_m\to\smallu$.

For the general case considered in Step~3, recall the notions
$\smallu^\varepsilon$, $\underline\ell^\varepsilon$ and
$A_{\varepsilon, n}$ from \eqref{eq:gpwlt1}, (\ref{vareps}) and
(\ref{Aeps}). Note then that $\smallu_m\to\smallu$ as $m\to\infty$ if
and only if $\smallu_m^\varepsilon \rightarrow \smallu^\varepsilon$ as
$m\to\infty$ for all $\varepsilon>0$.  Moreover, for all
$\varepsilon>0$,
\begin{align}
  \smallu_m^\varepsilon & = \underline \ell _\ast
  \nu^{\smallu_m^\varepsilon}(\cdot) = \lim_{n\to\infty}
  \underline\ell^\varepsilon_\ast \lambda_m(\cdot|A_{\varepsilon, n})
  \to \lim_{n\to\infty} \underline\ell^\varepsilon_\ast
  \lambda(\cdot|A_{\varepsilon, n}) = \smallu^\varepsilon.
\end{align}
The interchange of limits is justified, because $
\underline\ell^\varepsilon_\ast \lambda_m(\cdot|A_{\varepsilon,
  n}) \Rightarrow   \underline\ell^\varepsilon_\ast \lambda(\cdot|A_{\varepsilon,
  n})$ as $m\to\infty$, if $n$ is large enough, and we
have shown continuity of $\xi^{-1}$.
\end{proof}\sm

\begin{proof}[Proof of Theorem~\ref{T:05}]
    (i) Since $\xi$ is bijective on $\xi(\mathbb U)$, it is a
  consequence of Theorem 3.2 in \cite{MR1637085} that the martingale
  problem for $(\xi_\ast\mathbf P(\mathbb U), \Omega^{\uparrow,\Xi},
  \Pi^{1,\Xi})$ is well-posed. Moreover, by construction,
  $(\xi(\mathcal U_t))_{t\geq 0}$ solves the martingale problem. In
  addition, since $\mathcal U$ has the Feller property and $\xi$ and
  also $\xi^{-1}$ (see Theorem \ref{T:04}) are continuous, $\Xi$ is
  Feller, too. The last assertion follows from the continuity of the
  sample paths of the tree-valued Fleming-Viot dynamics and the
  continuity of $\xi$.

  (ii) With $\underline\ell$ from \eqref{e:lunder},
  \begin{equation}\label{eq:genY}
    \begin{aligned}
      &\Omega^{\uparrow}(\Psi\circ\xi)\big({\smallu}\big)
      \\
      &= \langle \nu^\smallu, \langle\nabla \psi\circ\underline\ell,
      \underline{\underline 2}\rangle\rangle + \gamma\sum_{1\le k<l}
      \langle \nu^{\smallu}, \psi\circ\underline \ell
      \circ\theta_{k,l} - \psi\circ\underline\ell \Big\rangle \\ &=
      \sum_{n\ge 2}n \langle \nu^\smallu,
      \frac{\partial}{\partial\ell_n} (\psi\circ\underline\ell)\rangle
      + \gamma \sum_{n\ge 2} (n-1) \langle \nu^\smallu,
      \psi\circ\beta_{n-1}\circ\underline\ell
      -\psi\circ\underline\ell\rangle \\ & = \sum_{n\ge 2} n\langle
      \xi(\smallu), \frac{\partial}{\partial\ell_n}\psi\rangle +
      \gamma \sum_{n\ge 1} n\langle \xi(\smallu),
      \psi\circ\beta_{n}-\psi\big\rangle
    \end{aligned}
  \end{equation}
  and we are done.
\end{proof}

To prepare the proof of Corollary~\ref{Cor:04} we investigate for each
time $t\ge 0$ the {\em mean sample Laplace transform},
\begin{equation}\label{goben}
   g(t;\underline\sigma)
 :=
   \mathbf{E} \big[\Psi^{\underline\sigma}(\Xi_t)\big],
\end{equation}
of the subtree lengths distribution $\Xi_t$,
where for $\underline{\sigma}\in\R_+^{\N}$,
\begin{equation}\label{grev35}
   \Psi^{\underline\sigma}(\Xi)
 :=
   \int_{\R_+^\N}\Xi(\mathrm d\underline{{l}})
   \psi^{\underline\sigma}(\underline{{l}})
\end{equation}
with the test function
\begin{equation}\label{grev34}
   \psi^{\underline\sigma}(\underline{{l}})
 :=
   \exp(-\langle\underline\sigma,\underline {{l}}\rangle).
\end{equation}
As usual, $\langle\boldsymbol{\cdot},\boldsymbol{\cdot}\rangle$ denotes the inner product.

\begin{lemma}[ODE system for the mean sample Laplace transforms]
  For $\underline \sigma\in\mathbb R_+^{\mathbb N}$ having only
  finitely many non-zero entries, the functions $g(.;\underline
  \sigma)$ satisfy the following system of differential equations:
  \label{LCor:03}
  \begin{equation}\label{eq:treeL1}
    \frac{\mathrm d}{\mathrm dt}g(t;{\underline\sigma})
    =
    -\big(\sum_{k=2}^\infty k\sigma_k\big) g(t;{\underline
      \sigma})+\gamma\sum_{k=1}^\infty k\big(g(t;{\tau_k\underline
      \sigma})-g(t;{\underline\sigma})\big)
  \end{equation}
  with the {\em merging operator}
  \begin{equation}\label{eq:treeL1a}
    \tau_k:\,(\sigma_1,...,\sigma_{k-1},\sigma_k,\sigma_{k+1},\sigma_{k+2},...)
    \mapsto
    (\sigma_1,...,\sigma_{k-1},\sigma_k+\sigma_{k+1},\sigma_{k+2},...).
  \end{equation}
\end{lemma}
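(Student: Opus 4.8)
The plan is to feed the special test functions $\psi^{\underline\sigma}$ into the martingale problem for $\Xi$ established in Theorem~\ref{T:05} and then read off the ODE system by taking expectations and differentiating in $t$.

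First I would note that for $\underline\sigma\in\R_+^{\mathbb N}$ with only finitely many non-zero entries, say $\sigma_k=0$ for $k>M$, the function $\psi^{\underline\sigma}(\underline\ell)=\exp(-\langle\underline\sigma,\underline\ell\rangle)$ is bounded by $1$, continuously differentiable, and depends only on $\ell_2,\dots,\ell_M$, so that $\Psi^{\underline\sigma}:=\Psi^{\psi^{\underline\sigma}}$ is of the form \eqref{psilam} and lies in $\Pi^{1,\Xi}$. By Theorem~\ref{T:05}(i) the process $\Xi$ solves the $(\xi_\ast\mathbf P_0,\Omega^{\uparrow,\Xi},\Pi^{1,\Xi})$-martingale problem, so
\[ M_t:=\Psi^{\underline\sigma}(\Xi_t)-\Psi^{\underline\sigma}(\Xi_0)-\int_0^t\mathrm ds\,\Omega^{\uparrow,\Xi}\Psi^{\underline\sigma}(\Xi_s) \]
is a martingale started at $0$.

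Next I would compute $\Omega^{\uparrow,\Xi}\Psi^{\underline\sigma}$ from the explicit formula \eqref{eq:genY4}. The growth part is immediate, since $\tfrac{\partial}{\partial\ell_n}\psi^{\underline\sigma}=-\sigma_n\psi^{\underline\sigma}$, whence $\sum_{n\ge2}n\langle\lambda,\tfrac{\partial}{\partial\ell_n}\psi^{\underline\sigma}\rangle=-\bigl(\sum_{k\ge2}k\sigma_k\bigr)\langle\lambda,\psi^{\underline\sigma}\rangle$. For the resampling part the key identity is
\[ \psi^{\underline\sigma}\circ\beta_n=\psi^{\tau_n\underline\sigma}, \]
with $\beta_n$ the coordinate-repetition map \eqref{eq:genY2} and $\tau_n$ the merging operator \eqref{eq:treeL1a}; this follows from the bookkeeping identity $\langle\underline\sigma,\beta_n(\underline\ell)\rangle=\langle\tau_n\underline\sigma,\underline\ell\rangle$, which one checks directly using $\ell_1=0$. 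Therefore $\gamma\sum_{n\ge1}n\langle\lambda,\psi^{\underline\sigma}\circ\beta_n-\psi^{\underline\sigma}\rangle=\gamma\sum_{n\ge1}n\bigl(\langle\lambda,\psi^{\tau_n\underline\sigma}\rangle-\langle\lambda,\psi^{\underline\sigma}\rangle\bigr)$, and since $\tau_k\underline\sigma=\underline\sigma$ for all $k\ge M$ (again because $\sigma_k=0$ for $k>M$) this is a finite sum; in particular $\Omega^{\uparrow,\Xi}\Psi^{\underline\sigma}$ is bounded.

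Finally, taking expectations in the martingale identity and using $\mathbf E[M_t]=0$ yields the integral equation
\[ g(t;\underline\sigma)=g(0;\underline\sigma)-\int_0^t\mathrm ds\,\Bigl[\bigl(\textstyle\sum_{k\ge2}k\sigma_k\bigr)g(s;\underline\sigma)-\gamma\sum_{k\ge1}k\bigl(g(s;\tau_k\underline\sigma)-g(s;\underline\sigma)\bigr)\Bigr]. \]
Because $\psi^{\underline\sigma}$ is bounded and, by Theorem~\ref{T:05}(i), $\Xi$ has continuous sample paths, dominated convergence shows that $s\mapsto g(s;\underline\tau)$ is continuous for each fixed $\underline\tau$; hence the integrand above is a continuous function of $s$ and the fundamental theorem of calculus gives \eqref{eq:treeL1}. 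The only point requiring a little care is the duality identity $\psi^{\underline\sigma}\circ\beta_n=\psi^{\tau_n\underline\sigma}$ between the repetition operator on the $\ell$-side and the merging operator on the $\sigma$-side, together with the bookkeeping that keeps all the sums finite; everything else is routine.
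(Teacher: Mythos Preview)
Your proof is correct and follows essentially the same approach as the paper: apply the martingale problem for $\Xi$ from Theorem~\ref{T:05} to the test function $\Psi^{\underline\sigma}$, compute $\Omega^{\uparrow,\Xi}\Psi^{\underline\sigma}$ via formula~\eqref{eq:genY4}, invoke the duality identity $\psi^{\underline\sigma}\circ\beta_n=\psi^{\tau_n\underline\sigma}$, and take expectations. You are in fact slightly more careful than the paper in verifying that $\Psi^{\underline\sigma}\in\Pi^{1,\Xi}$ (rather than just $\Pi^\Xi$), in noting that the resampling sum is finite, and in justifying differentiability of $t\mapsto g(t;\underline\sigma)$ via continuity of the integrand.
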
\sm

\begin{proof} By standard arguments, $\Psi^{\underline \sigma}\in\Pi^\Xi$ and
\begin{equation}\label{ag5}
   \frac{\mathrm{d}}{\mathrm{d}t}g(t;\underline\sigma)
 =
   \mathbb{E}\big[\Omega^{\uparrow,\Xi}
   \Psi^{\underline\sigma}(\Xi_t)\big].
\end{equation}

Hence, inserting (\ref{eq:genY}), and using $\psi^{\underline
  \sigma}(\beta_k \underline {{l}}) = \psi^{\tau_k \underline
  \sigma}(\underline {{l}})$ for all $k=1,2,\ldots$, with $\beta_k$
from \eqref{eq:genY2} and $\tau_k$ from \eqref{eq:treeL1a}, we find
\begin{equation}\label{eq:step2a}
\begin{aligned}
   &\frac{\mathrm{d}}{\mathrm{d}t}g(t,\underline{\sigma})
  \\
 &=
   \mathbf{E} \Big[ -\int\Xi_t (\mathrm{d}\underline{{l}})\,
   \sum_{k=2}^\infty k
   \sigma_k\psi^{\underline{\sigma}}(\underline{{l}})+\gamma
   \int\Xi_t(\mathrm{d}\underline{{l}})\,
   \sum_{k=1}^\infty k\big(\psi^{\underline{\sigma}}(\beta_k\underline{{l}})-
   \psi^{\underline{\sigma}}(\underline{{l}})\big)\Big]
  \\
 &=
   -\big(\sum_{k=2}^\infty k
   \sigma_k\big)g(t,\underline{\sigma})+\gamma\sum_{k=1}^\infty k
   \big(g(t,\tau_k\underline{\sigma})-g(t,\underline{\sigma})\big),
\end{aligned}
\end{equation}
as claimed.
\end{proof}\sm

\begin{remark}\label{Rem:04} Recall, for each $n\in\N$,
the function $g^n$ from (\ref{grev40}).
For each $n\ge 2$ and $\sigma\ge 0$, applying \eqref{eq:treeL1} to
$\underline{\sigma}=(\sigma\delta_{k,n})_{k\ge 2}$
yields, setting $g^1(t;\sigma):=1$,
\begin{equation}\label{w:001b}
\begin{aligned}
  \frac{\mathrm d}{\mathrm dt}g^n(t;\sigma) &= -n\sigma g^n(t;\sigma)+
  \gamma \binom{n}{2}\big(g^{n-1}(t;\sigma)-g^n(t;\sigma)\big)
  \\
  &= \frac{\gamma}{2}n(n-1)g^{n-1}(t;\sigma)-\frac{\gamma}{2} n
  \big(\tfrac{2}{\gamma}\sigma+n-1\big)g^n(t;\sigma),
\end{aligned}
\end{equation}
i.e.,
\begin{equation}\label{w:001c}
\begin{aligned}
\frac{{\rm d}}{{\rm d}t} \big( g^2(t;\sigma), g^3(t;\sigma), \ldots\big)
 &=
   \frac{\gamma}{2}\Big[ A\big(\tfrac{2}{\gamma}\sigma\big)\big(g^2(t;\sigma),g^3(t;\sigma),...\big)^\top+b^\top\Big],
\end{aligned}
\end{equation}
where
\begin{equation}\label{e:bbb}
   b^\top
 :=
   (2,0,...)^\top
\end{equation}
and for $\widetilde{\sigma}\ge 0$ the matrix $A:=A(\widetilde{\sigma})$ is defined by
\begin{equation}\label{e:systemA}
   A_{k,l}
 :=
  \begin{cases}k(k-1), & \mbox{ if }k=l+1,
    \\
    -k(\widetilde\sigma+k-1), & \mbox{ if }k=l,
    \\
    0, & \mbox{else},
  \end{cases}
\end{equation}
for all $k,l\ge 2$.
\hfill$\qed$
\end{remark}\sm

The proof of Corollary~\ref{Cor:04} uses the following preparatory
lemma.
\begin{lemma}
Fix $\widetilde{\sigma}\ge 0$. \label{L:04}
Let $B=(B_{k,l})_{k,l\geq 2}$ and $B^{-1}=(B^{-1}_{k,l})_{k,l\geq 2}$ be matrices defined by
\begin{equation}\label{mpeq:B}
  B_{k,l}
  :=
  \frac{\tfrac{k!}{l!}{k-1\choose l-1}\Gamma(\widetilde\sigma+2l)}
  {\Gamma(\widetilde\sigma+k+l)},
  \quad\mbox{ and }\quad
  B^{-1}_{k,l}
  =
  \frac{(-1)^{k+l}\tfrac{k!}{l!}{k-1\choose l-1}\Gamma(\widetilde\sigma+k+l-1)}
  {\Gamma(\widetilde\sigma+2k-1)}.
\end{equation}
\begin{itemize}
\item[(i)]
The matrices $B$ and $B^{-1}$ are inverse to each other.
\item[(ii)] The matrix $A=A({\widetilde{\sigma}})
=(A_{k,l})_{k,l\ge 2}$ has eigenvalues
\begin{equation}\label{e:systemD}
   \lambda_k
 :=
   -k(\widetilde\sigma+k-1),\qquad k\ge 2.
\end{equation}
\item[(iii)] If $D=(\lambda_k\delta_{k,l})_{k,l\ge 2}$ then
\begin{equation}\label{mpe:Df}
   f(A)=B f(D) B^{-1}
\end{equation}
for all analytical functions $f:\R^{\N^2}\to\R^{\N^2}$. Specifically,
$A^{-1}=B D^{-1} B^{-1}$ and
$e^{At}=B e^{Dt}B^{-1}$ for all $t\ge 0$.
\item[(iv)] For $\widetilde{\sigma}>0$, let
  $A^{-1}(\widetilde{\sigma})=(A^{-1}_{k,l})_{k,l\ge 2}$ be given by
  $A^{-1}_{k,l} = 0$ for $k<l$ and
\begin{equation}\label{mpAinverse}
  A^{-1}_{k,l}
  :=
  -
  \frac{(k-1)!\Gamma(\widetilde{\sigma}+l-1)}{l!
    \Gamma(\widetilde{\sigma}+k)},\qquad  k\geq l.
\end{equation}
Then $A^{-1}$ and $A$ are inverse to each other.
\end{itemize}
\end{lemma}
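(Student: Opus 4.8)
The plan rests on the observation that all matrices here are triangular or banded, so every product appearing below has entries given by \emph{finite} sums, every identity restricts consistently to the finite $(n-1)\times(n-1)$ truncations indexed by $\{2,\dots,n+1\}$, and no convergence question arises — indeed the ODE system \eqref{w:001c} couples only $g^2,\dots,g^n$ when $\underline\sigma$ is supported on $\{2,\dots,n\}$, so one is effectively always in finite dimensions. Concretely, $D$ is diagonal, $A$ is lower bidiagonal with $A_{k,k}=\lambda_k$ and $A_{k,k-1}=k(k-1)$, while $B$, the displayed $B^{-1}$ and the displayed $A^{-1}$ are lower triangular, with $B_{k,k}=B^{-1}_{k,k}=1$ and $A^{-1}_{k,k}=1/\lambda_k$. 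Part (ii) is then immediate: the spectrum of a lower triangular matrix is its diagonal, and the $\lambda_k=-k(\widetilde\sigma+k-1)$, $k\ge2$, are pairwise distinct since $\lambda_{k+1}-\lambda_k=-\widetilde\sigma-2k<0$; I also record the two facts used repeatedly below, $\lambda_m-\lambda_k=(k-m)(\widetilde\sigma+k+m-1)$ and $\lambda_k\ne0$ for $\widetilde\sigma\ge0$.

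For parts (i) and (iii) the first step is to verify the two one-sided eigenvector relations $AB=BD$ and $B^{-1}A=DB^{-1}$, each of which collapses to a single one-step ratio identity. For $AB=BD$ one needs $k(k-1)B_{k-1,m}+\lambda_kB_{k,m}=\lambda_mB_{k,m}$, i.e.
\[
  k(k-1)\,B_{k-1,m}=(k-m)(\widetilde\sigma+k+m-1)\,B_{k,m},
\]
and cancelling the binomial coefficients and Gamma factors in $B_{k-1,m}/B_{k,m}$ yields exactly the factor $(k-m)(\widetilde\sigma+k+m-1)/(k(k-1))$. The relation $B^{-1}A=DB^{-1}$ is the mirror computation for the ratio $B^{-1}_{m,l+1}/B^{-1}_{m,l}$ (here one must keep track of the alternating sign and the shifted Gamma arguments). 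Granting these, set $M:=B^{-1}B$; computing $B^{-1}AB$ by associating the product both ways — legitimate by the triangular/banded structure — gives $DM=MD$, so $M$ is diagonal because the $\lambda_k$ are distinct, and being lower triangular with $M_{kk}=B^{-1}_{kk}B_{kk}=1$ it equals the identity. This is part (i); since a lower triangular matrix with nonvanishing diagonal has a unique two-sided lower triangular inverse, the displayed $B^{-1}$ is indeed $B^{-1}$. Then $A=BDB^{-1}$, hence $A^n=BD^nB^{-1}$ for all $n$ and $f(A)=Bf(D)B^{-1}$ for every analytic $f$ by summing power series; specialising to $f(x)=1/x$ (allowed since $\lambda_k\ne0$) and $f(x)=e^{tx}$ gives part (iii).

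For part (iv) I would verify $AA^{-1}=I$ directly from the displayed formula, again via one-step ratios: for $k=m$, $(AA^{-1})_{m,m}=\lambda_mA^{-1}_{m,m}=1$ as $A^{-1}_{m,m}=1/\lambda_m$; for $k>m$, $(AA^{-1})_{k,m}=k(k-1)A^{-1}_{k-1,m}+\lambda_kA^{-1}_{k,m}=0$ because $A^{-1}_{k-1,m}/A^{-1}_{k,m}=(\widetilde\sigma+k-1)/(k-1)=-\lambda_k/(k(k-1))$; and for $k<m$ both entries vanish. (Equivalently one can expand $A^{-1}=BD^{-1}B^{-1}$ and collapse the resulting double sum, but the direct check is shorter.) The only genuinely delicate point is the bookkeeping that all these infinite-matrix manipulations are valid, which the triangular/banded structure noted at the outset settles once and for all; among the computations themselves I expect the $B^{-1}A=DB^{-1}$ ratio — with its sign bookkeeping — to be the fiddliest single step.
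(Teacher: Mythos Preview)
Your proposal is correct, and for parts (ii), (iii) and (iv) it coincides with the paper: both note that $A$ is lower triangular so its eigenvalues are the diagonal entries, both verify $AB=BD$ via the one-step ratio identity you wrote down (the paper's equations \eqref{mpeigen2}--\eqref{mpeq:eigen1}), and both check $AA^{-1}=I$ directly entry by entry.

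Where you genuinely diverge is in part (i). The paper proves $B\cdot B^{-1}=I$ by brute force: it expands $(B\cdot B^{-1})_{k,l}$ as a finite sum over $m$, massages it into the shape
\[
C\sum_{m=0}^{\hat k}(-1)^m\frac{(\hat\sigma+2m)\Gamma(\hat\sigma+m)}{\Gamma(\hat k-m+1)\Gamma(m+1)\Gamma(\hat\sigma+\hat k+m+1)},
\]
and then kills it by splitting via \eqref{mpalso} and invoking a classical combinatorial identity from Riordan. Your route is slicker: you additionally verify the left-eigenvector relation $B^{-1}A=DB^{-1}$ (the mirror ratio identity), and then the commutation $DM=MD$ for $M=B^{-1}B$ together with distinctness of the $\lambda_k$ forces $M$ to be diagonal, hence the identity since both $B$ and $B^{-1}$ have $1$'s on the diagonal. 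This trades an external hypergeometric/combinatorial identity for one extra elementary ratio computation plus the standard ``left and right eigenvectors for distinct eigenvalues are biorthogonal'' argument; it is self-contained and arguably more conceptual, whereas the paper's direct computation, once one has Riordan's formula in hand, requires no eigenvalue reasoning at all.
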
\sm

\begin{proof} First, we note that $A, A^{-1}, B, B^{-1}$ are lower
  triangular infinite matrices. This implies that the domain of the
  maps induced by these matrices is $\mathbb R^{\mathbb N}$. In
  particular, we do not have to distinguish between left- and
  right inverse matrices of $A$ and $B$.

  (i) We need to show that
  \begin{align}
    (B\cdot B^{-1})_{k,l} = \delta_{k,l}
  \end{align}
  for $k\geq l\ge 2$. This is clear in the case where $k\leq l$.  For
  $k> l\ge 2$, with constants $C$ changing from line to line, and
  using the abbreviations $\widehat{k}:=k-l$ and
  $\widehat{\sigma}:=\widetilde{\sigma}+2l-1$,
\begin{equation}\label{mpinverse00}
\begin{aligned}
   (B\cdot B^{-1})_{k,l}
 &=
   \sum_{m=l}^{k}B_{k,m}B^{-1}_{m,l}
  \\
 &=
   \sum_{m=l}^{k}\frac{\frac{k!}{m!}{k-1\choose m-1}
   \Gamma(\widetilde{\sigma}+2m)}{\Gamma(\widetilde{\sigma}+k+m)}\cdot
   \frac{(-1)^{m+l}\frac{m!}{l!}{m-1\choose l-1}
   \Gamma(\widetilde{\sigma}+m+l-1)}{\Gamma(\widetilde{\sigma}+2m-1)}
  \\
  &=
   C\sum_{m=l}^{k}(-1)^{m+l}\frac{(\widetilde{\sigma}+2m-1)\Gamma(\widetilde{\sigma}+m+l-1)}{(k-m)!(m-l)!\Gamma(\widetilde{\sigma}+k+m)}
  \\
 &=
   C\sum_{m=0}^{\widehat{k}}(-1)^{m}
   \frac{(\widehat{\sigma}+2m)\Gamma(\widehat{\sigma}+m)}
   {\Gamma(\widehat{k}-m+1)\Gamma(m+1)\Gamma(\widehat{\sigma}+\widehat{k}+m+1)}
  \\
 &=
   C\sum_{m=0}^{\widehat{k}}(-1)^{m}
   \frac{(\widehat{\sigma}+2m)\Gamma(\widehat{\sigma}+m)}{\Gamma(m+1)}\cdot\frac{\Gamma(\widehat{\sigma}+2\widehat{k}+1)}
   {\Gamma(\widehat{\sigma}+\widehat{k}+m+1)\Gamma(\widehat{k}-m+1)}
  \\
 &=
   0,
\end{aligned}
\end{equation}
where we have used that
\begin{equation}\label{mpalso}
  C\cdot\frac{(\widehat{\sigma}+2m)\Gamma(\widehat{\sigma}+m)}{\Gamma(m+1)}
  =
  \frac{\Gamma(\widehat{\sigma}+m+1)}{\Gamma(m+1)\Gamma(\widehat{\sigma}+1)}+
  \frac{\Gamma(\widehat{\sigma}+m)}{\Gamma(m)\Gamma(\widehat{\sigma}+1)}
\end{equation}
and then applied Formula~(5d) on page~10 in~\cite{Riordan1968}.\sm

(ii) Since $A$ is lower triangular, this is obvious. \sm

(iii) Note that
\begin{equation}\label{mpeigen2a}
   (A\cdot B)_{2,l}-\lambda_l B_{2,l}=0
\end{equation}
and
\begin{equation}\label{mpeigen2}
\begin{aligned}
   \lambda_l-\lambda_k
 &=
   \widetilde\sigma(k-l)+(k^2-k-l^2+l)
  \\
 &=
   (k-l)\big(\widetilde\sigma+k+l-1\big).
\end{aligned}
\end{equation}

Thus for all $k\ge 3$ and $l\ge 2$,
\begin{equation}\label{mpeq:eigen3}
\begin{aligned}
  B_{k,l}
 =
  \frac{k(k-1)}{(k-l)(\widetilde\sigma+k+l-1)}B_{k-1,l},
\end{aligned}
\end{equation}
and since $A_{k,k}=\lambda_k$,
\begin{equation}\label{mpeq:eigen1}
\begin{aligned}
  \big(A\cdot B\big)_{k,l}-\lambda_l B_{k,l} &=
  A_{k,k-1}B_{k-1,l}+(\lambda_k-\lambda_l)B_{k,l}
  \\
  &= \big(k(k-1)-k(k-1)\big)B_{k-1,l}
  \\
  &= 0,
\end{aligned}
\end{equation}
which proves that $B$ contains all
eigenvectors of $A$. Hence the claim follows by standard linear
algebra. \sm

(iv) It is clear that $(A\cdot A^{-1})_{k,k}=1$, while for $k\not = l$,
\begin{equation}\label{mpproofinv}
\begin{aligned}
  (A\cdot A^{-1})_{k,l} &= A_{k,k-1}\cdot A^{-1}_{k-1,l}+A_{k,k}\cdot
  A^{-1}_{k,l}
  \\
  &=
  k(k-1)\frac{(k-2)!\Gamma(\widetilde{\sigma}+l-1)}{l!\Gamma(\widetilde{\sigma}+k-1)}-k(\widetilde{\sigma}+k-1)
  \frac{(k-1)!\Gamma(\widetilde{\sigma}+l-1)}{l!\Gamma(\widetilde{\sigma}+k)}
  \\
  &= 0.
\end{aligned}
\end{equation}
\end{proof}\sm

\begin{proof}[Proof of Corollary~\ref{Cor:04}]
Fix $n\in\N$ and $\sigma\ge 0$. Put
\begin{equation}\label{qqq9}
   h^{\sigma,n}(t)
 :=
   g^{n}\big(\tfrac{2t}{\gamma};\sigma\big).
\end{equation}

By (\ref{w:001b}), the vector
  $\underline h
 :=
   (h^{\sigma,2},h^{\sigma,3},...)^\top$
satisfies the linear system of ordinary differential
equations
\begin{equation}\label{eq:system}
  \frac{\mathrm{d}}{\mathrm{d}t}\underline
  h=A\underline h+b,
\end{equation}
or equivalently,
\begin{equation}\label{eq:system1}
   h(t)
 =
   e^{At}h(0)+e^{At}A^{-1}b-A^{-1}b,
\end{equation}
with $b=(2,0,0,...)^\top$ and
$A=(A_{k,l})_{k,l\geq 2}$ as defined in (\ref{e:systemA}).
Consequently, if $B$, $B^{-1}$ and $D$ are as in Lemma~\ref{L:04}, then
\begin{equation}\label{e:solu2}
  h(t)
  = - A^{-1} b + Be^{Dt}\big( B^{-1}h(0) + D^{-1}B^{-1}b\big).
\end{equation}
Combining (\ref{qqq9}) with (\ref{e:solu2}) yield the explicit
expressions given in (\ref{e:solu45}).

Finally, by \eqref{e:solu2},
\begin{equation}\label{???}
\begin{aligned}
  g^n\big(t;\sigma\big) &\tto
  -2\Big(A\big(\tfrac{2}{\gamma}\sigma\big)^{-1}\Big)_{n,2}
  \\
  &=
  \frac{\Gamma(n)\Gamma(\widetilde{\sigma}+1)}{\Gamma(n+\widetilde{\sigma})}
  \\
  &= \mathbf{E}\big[e^{-\sigma\sum_{k=2}^n{\mathcal E}^k}\big].
\end{aligned}
\end{equation}
\end{proof}\sm

{\sc Acknowledgement. } We thank David Aldous, Steve Evans, Patric
Gl\"ode, Pleuni Pennings and Sven Piotrowiak for helpful discussions.
We are particularly grateful to Steve Evans who provided the key
argument in the proof of Theorem~\ref{T:04}.

\newcommand{\etalchar}[1]{$^{#1}$}


\begin{thebibliography}{BBM{\etalchar{+}}09}

\bibitem[Ald91]{MR93f:60010}
D.~Aldous.
\newblock The continuum random tree. {II}: An overview.
\newblock {\em Stochastic analysis, Proc. Symp., Durham/UK 1990, Lond. Math.
  Soc. Lect. Note Ser.}, 167:23--70, 1991.

\bibitem[Ald93]{Ald1993}
D.~Aldous.
\newblock The continuum random tree {III}.
\newblock {\em Ann. Probab.}, 21(1):248--289, 1993.

\bibitem[BBM{\etalchar{+}}09]{BirknerBlathMoehle2009}
M.~Birkner, J.~Blath, M.~M\"ohle, M.~Steinr\"ucken, and J.~Tams.
\newblock A modified lookdown construction for the xi-fleming-viot process with
  mutation and populations with recurrent bottlenecks.
\newblock {\em ALEA}, 6:25--61, 2009.

\bibitem[BG03]{BertoinLeGall2003}
J.~Bertoin and J.-F.~Le Gall.
\newblock Stochastic flows associated to coalescent processes.
\newblock {\em Prob. Theo. Rel. Fields}, 126:261--288, 2003.

\bibitem[Daw93]{Dawson1993}
D.A. Dawson.
\newblock Measure-valued {M}arkov processes.
\newblock In P.L. Hennequin, editor, {\em \'Ecole d'\'Et\'e de Probabilit\'es
  de Saint-Flour XXI--1991}, volume 1541 of {\em Lecture Notes in Mathematics},
  pages 1--260, Berlin, 1993. Springer.

\bibitem[DDSJ10]{DelmasEtAl2010}
J.-F. Delmas, J.-S. Dhersin, and A.~Siri-Jegousse.
\newblock {On the two oldest families for the Wright-Fisher process}.
\newblock {\em Elec. J. Probab.}, pages 776--800, 2010.

\bibitem[DGP11]{DepperschmidtGrevenPfaffelhuber2011}
A.~Depperschmidt, A.~Greven, and P.~Pfaffelhuber.
\newblock Tree-valued fleming-viot dynamics with mutation and selection.
\newblock 2011.
\newblock Preprint.

\bibitem[DGV95]{DGV95}
D.A. Dawson, A.~Greven, and J.~Vaillancourt.
\newblock Equilibria and quasi-equilibria for infinite systems of
  {F}leming-{V}iot processes.
\newblock {\em Trans. Mem. Amer. Math. Soc.}, 347(7):2277--2360, 1995.

\bibitem[DK96]{DonKur1996}
P.~Donnelly and T.G. Kurtz.
\newblock A countable representation of the {F}leming-{V}iot processes.
\newblock {\em Ann. Probab.}, 24(2):698--742, 1996.

\bibitem[DK99a]{DonKur1999}
P.~Donnelly and T.G. Kurtz.
\newblock Genealogical processes for {F}leming-{V}iot models with selection and
  recombination.
\newblock {\em Ann. Appl. Probab.}, 9:1091--1148, 1999.

\bibitem[DK99b]{DonKur1998}
P.~Donnelly and T.G. Kurtz.
\newblock Particle representation for measure-valued population models.
\newblock {\em Ann. Probab.}, 27(1):166--205, 1999.

\bibitem[DLG02]{MR1954248}
T.~Duquesne and J.-F. Le~Gall.
\newblock Random trees, {L}\'evy processes and spatial branching processes.
\newblock {\em Ast\'erisque}, 281, 2002.

\bibitem[DMT96]{DreMouTer96}
A~Dress, V.~Moulton, and W.~Terhalle.
\newblock T-theory.
\newblock {\em Europ. J. Combinatorics}, 17:161--175, 1996.

\bibitem[DP91]{DawPer91}
D.A. Dawson and E.A. Perkins.
\newblock Historical processes.
\newblock {\em Mem. Amer. Math. Soc.}, 93(454), 1991.

\bibitem[Dre84]{Dre84}
A.~Dress.
\newblock Trees, tight extensions of metric spaces, and the cohomological
  dimension of certain groups: A note on combinatorical properties of metric
  spaces.
\newblock {\em Adv. Math.}, 53:321--402, 1984.

\bibitem[EK86]{EthierKurtz86}
S.N. Ethier and T.G. Kurtz.
\newblock {\em Markov {P}rocesses. {C}haracterization and {C}onvergence}.
\newblock John Wiley, New York, 1986.

\bibitem[EK93]{EthierKurtz1993}
S.N. Ethier and T.G. Kurtz.
\newblock Fleming-{V}iot processes in population genetics.
\newblock {\em SIAM J. Contr. Optim.}, 31:345--386, 1993.

\bibitem[EL08]{math.PR/0701657}
S.~N. Evans and T.~Lidman.
\newblock {Asymptotic evolution of acyclic random mappings}.
\newblock {\em Electron. J. Probab.}, 12(42):1151--1180, 2008.

\bibitem[EO94]{EvansOConnell1994}
S.N. Evans and N.~O'Connell.
\newblock Weighted occupation time for branching particle systems and a
  representation for the supercritical superprocess.
\newblock {\em Canad. Math. Bull.}, 37(2):187--196, 1994.

\bibitem[EPW06]{EvaPitWin2006}
S.N. Evans, J.~Pitman, and A.~Winter.
\newblock Rayleigh processes, real trees, and root growth with re-grafting.
\newblock {\em Prob. Theo. Rel. Fields}, 134(1):81--126, 2006.

\bibitem[ER10]{EvansRalph2010}
Steven~N. Evans and Peter~L. Ralph.
\newblock Dynamics of the time to the most recent common ancestor in a large
  branching population.
\newblock {\em Ann. Appl. Probab.}, 20(1):1--25, 2010.

\bibitem[Eth01]{MR1779100}
A.~Etheridge.
\newblock {\em An introduction to superprocesses}.
\newblock American Mathematical Society, 2001.

\bibitem[Eva00]{Evans2000}
S.~Evans.
\newblock Kingman's coalescent as a random metric space.
\newblock In {\em Stochastic Models: Proceedings of the International
  Conference on Stochastic Models in Honour of Professor Donald A. Dawson,
  Ottawa, Canada, June 10-13, 1998 (L.G Gorostiza and B.G. Ivanoff eds.)},
  Canad. Math. Soc., 2000.

\bibitem[Eva07]{Evans2005}
S.~N. Evans.
\newblock Probability and real trees.
\newblock In J.~Picard, editor, {\em \'Ecole d'\'Et\'e de Probabilit\'es de
  Saint-Flour XXXV--2005}, volume 1920 of {\em Lecture Notes in Mathematics},
  pages 1--193, Berlin, 2007. Springer.

\bibitem[EW06]{EvaWin2006}
S.N. Evans and A.~Winter.
\newblock Subtree prune and re-graft: A reversible real-tree valued {M}arkov
  chain.
\newblock {\em Ann. Prob.}, 34(3):918--961, 2006.

\bibitem[FS86]{FukushimaStroock1986}
M.~Fukushima and D.~Stroock.
\newblock Reversibility of solutions to martingale problems.
\newblock {\em Adv. in Mathematics (Supp. Studies)}, 9:107--123, 1986.

\bibitem[FV78]{FlemingViot1978}
W.H. Fleming and M.~Viot.
\newblock Some measure-valued population processes.
\newblock In {\em Stochastic analysis (Proc. Internat. Conf., Northwestern
  Univ., Evanston, Ill., 1978)}, pages 97--108. Academic Press, New York, 1978.

\bibitem[FV79]{FlemingViot1979}
W.H. Fleming and M.~Viot.
\newblock Some measure-valued {M}arkov processes in population genetics theory.
\newblock {\em Indiana Univ. Math. J.}, 28(5):817--843, 1979.

\bibitem[GLW05]{GLW05}
A.~Greven, V.~Limic, and A.~Winter.
\newblock Representation theorems for interacting {Moran} models, interacting
  {F}isher-{W}right diffusions and applications.
\newblock {\em Elec. {J}. {P}rob.}, 10(39):1286--1358, 2005.

\bibitem[GPW09a]{GrePfaWin2006a}
A.~Greven, P.~Pfaffelhuber, and A.~Winter.
\newblock {Convergence in distribution of random metric measure spaces (The
  $\Lambda$-coalescent measure tree)}.
\newblock {\em Prob. Th. Rel. Fields}, 145:285--322, 2009.

\bibitem[GPW09b]{GrePopWin2007}
A.~Greven, L.~Popovic, and A.~Winter.
\newblock Genealogy of catalytic branching models.
\newblock {\em Ann. Appl. Prob.}, 19(3):1232--1272, 2009.

\bibitem[Gro99]{Gromov2000}
M.~Gromov.
\newblock {\em Metric structures for {R}iemannian and non-{R}iemannian spaces},
  volume 152 of {\em Progress in Mathematics}.
\newblock Birkh\"auser Boston Inc., Boston, MA, 1999.

\bibitem[Kal77]{Kal1977}
O.~Kallenberg.
\newblock Stability of critical cluster fields.
\newblock {\em Math. Nachr.}, 77:7--43, 1977.

\bibitem[Kin82a]{Kingman1982a}
J.F.C. Kingman.
\newblock The coalescent.
\newblock {\em Stochastic Process. Appl.}, 13(3):235--248, 1982.

\bibitem[Kin82b]{Kingman1982b}
J.F.C. Kingman.
\newblock Exchangeability and the evolution of large populations.
\newblock In {\em Proceedings of the International Conference on
  Exchangeability in Probability and Statistics, Rome, 6th-9th April, 1981, in
  honour of Professor Bruno de Finetti}, pages 97--112. North-Holland Elsevier,
  Amsterdam, 1982.

\bibitem[Kur98]{MR1637085}
T.G. Kurtz.
\newblock Martingale problems for conditional distributions of {M}arkov
  processes.
\newblock {\em Electron. J. Probab.}, 3(9):1--29, 1998.

\bibitem[LG99]{MR1714707}
J.-F. Le~Gall.
\newblock {\em Spatial branching processes, random snakes and partial
  differential equations}.
\newblock Lectures in Mathematics ETH Z\"urich. Birkh\"auser Verlag, Basel,
  1999.

\bibitem[MS01]{MoehleSagitov2001}
M.~M{\"o}hle and S.~Sagitov.
\newblock A classification of coalescent processes for haploid exchangeable
  models.
\newblock {\em Ann. Prob.}, 29:1547--1562, 2001.

\bibitem[Pit99]{Pit1999}
J.~Pitman.
\newblock Coalescents with multiple collisions.
\newblock {\em Ann. Prob.}, 27(4):1870--1902, 1999.

\bibitem[PW06]{PfaffelhuberWakolbinger2006}
P.~Pfaffelhuber and A.~Wakolbinger.
\newblock The process of most recent common ancestors in an evolving
  coalescent.
\newblock {\em Stoch. Proc. Appl.}, 116:1836--1859, 2006.

\bibitem[PWW11]{PfaffelhuberWakolbingerWeisshaupt2010}
P.~Pfaffelhuber, A.~Wakolbinger, and H.~Weisshaupt.
\newblock The tree length of an evolving coalescent.
\newblock {\em Prob. Theo. Rel. Fields}, in press, 2011.

\bibitem[Rio68]{Riordan1968}
J.~Riordan.
\newblock {\em Combinatorial {I}dentities}.
\newblock John Wiley \& Sons, Inc., New York-London-Sydney, 1968.

\bibitem[Ter97]{Ter97}
W.F. Terhalle.
\newblock R-trees and symmetric differences of sets.
\newblock {\em Europ. J. Combinatorics}, 18:825--833, 1997.

\bibitem[Wat75]{Watterson1975}
G.A. Watterson.
\newblock On the number of segregating sites in genetical models without
  recombination.
\newblock {\em Theo. Pop. Biol.}, 7:256--276, 1975.

\bibitem[Zam01]{MR1814427}
L.~Zambotti.
\newblock A reflected stochastic heat equation as symmetric dynamics with
  respect to the 3-d {B}essel bridge.
\newblock {\em J. Funct. Anal.}, 180(1):195--209, 2001.

\bibitem[Zam02]{MR1891060}
L.~Zambotti.
\newblock Integration by parts on {B}essel bridges and related {SPDE}s.
\newblock {\em C. R. Math. Acad. Sci. Paris}, 334(3):209--212, 2002.

\bibitem[Zam03]{MR1959795}
L.~Zambotti.
\newblock Integration by parts on {$\delta$}-{B}essel bridges, {$\delta>3$} and
  related {SPDE}s.
\newblock {\em Ann. Probab.}, 31(1):323--348, 2003.

\end{thebibliography}

\end{document}